\newtheorem{theo}{Theorem}[section]
\newtheorem{lem}[theo]{Lemma}
\newtheorem{conj}[theo]{Conjecture}
\newtheorem{fact}[theo]{Fact}
\newtheorem{claim}[theo]{Claim}
\newtheorem*{claim-no}{Claim}
\newtheorem{coro}[theo]{Corollary}
\theoremstyle{definition}
\newtheorem{defi}[theo]{Definition}
\newtheorem{remark}[theo]{Remark}
\newtheorem{proc}[theo]{Procedure}
\numberwithin{equation}{section}
\numberwithin{figure}{section}
\newcommand{\eps}{\varepsilon}
\newcommand{\e}{\overline{e}}
\newcommand{\C}{\mathcal{C}}
\newcommand{\su}{\subseteq}
\newcommand{\mi}{\setminus}
\newcommand{\tH}{\widetilde{H}}
\newcommand{\es}{\emptyset}
\newcommand{\R}{\text{red}}
\newcommand{\Z}{\mathcal{Z}}
\newcommand{\sh}{\operatorname{sh}}
\renewcommand{\l}{\ell}
\title{A proof of a conjecture of Erd\H{o}s, Faudree, Rousseau and Schelp on subgraphs of minimum degree $k$}
\author{Lisa Sauermann\thanks{Department of Mathematics, Stanford University, 450 Serra Mall, Building 380, Stanford, CA 94305, USA. Email: \texttt{lsauerma@stanford.edu}. Research supported by Jacob Fox's Packard Fellowship.}}
\begin{document}

\maketitle

\begin{abstract}
\noindent Erd\H{o}s, Faudree, Rousseau and Schelp observed the following fact for every fixed integer $k\geq 2$: Every graph on $n\geq k-1$ vertices with at least $(k-1)(n-k+2)+{k-2\choose 2}$ edges contains a subgraph with minimum degree at least $k$. However, there are examples in which the whole graph is the only such subgraph. Erd\H{o}s et al.\ conjectured that having just one more edge implies the existence of a subgraph on at most $(1-\eps_k)n$ vertices with minimum degree at least $k$, where $\eps_k>0$ depends only on $k$. We prove this conjecture, using and extending ideas of Mousset, Noever and \v{S}kori\'{c}.
\end{abstract}

\section{Introduction}

Let $k\geq 2$ be fixed. Erd\H{o}s, Faudree, Rousseau and Schelp \cite{erdos2} observed the following fact.

\begin{fact}\label{fact1} Every graph on $n\geq k-1$ vertices with at least
$$(k-1)(n-k+2)+{k-2\choose 2}$$
edges contains a subgraph of minimum degree at least $k$.
\end{fact}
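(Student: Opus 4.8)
The plan is to prove the contrapositive: I will show that if a graph $G$ on $n\geq k-1$ vertices contains no subgraph of minimum degree at least $k$, then $G$ has strictly fewer than $(k-1)(n-k+2)+\binom{k-2}{2}$ edges. The tool is the standard ``peeling'' process. As long as the current graph is non-empty it must contain a vertex of degree at most $k-1$, for otherwise that graph would itself be a subgraph of $G$ with minimum degree at least $k$, contradicting our assumption; so we may delete such a vertex and repeat. Since each step removes exactly one vertex, after $n$ steps the graph is empty, and the process yields an enumeration $u_1,\dots,u_n$ of $V(G)$ in which $u_i$ is the vertex deleted in step $i$.

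Next I would bound the number of edges destroyed in each step. When $u_i$ is removed, the current graph has $n-i+1$ vertices, and $u_i$ has degree at most $k-1$ in it by the choice of $u_i$; trivially $u_i$ also has degree at most $n-i$ there. Hence step $i$ destroys at most $\min\{k-1,\,n-i\}$ edges, and summing over $i$ gives
\[
|E(G)| \;\le\; \sum_{i=1}^{n}\min\{k-1,\,n-i\} \;=\; (k-1)(n-k+1)+\sum_{j=0}^{k-2}j \;=\; (k-1)(n-k+1)+\binom{k-1}{2},
\]
where the first term collects the steps with $i\le n-k+1$ (for which $n-i\ge k-1$) and the second collects the final $k-1$ steps (for which $n-i$ runs through $k-2,k-3,\dots,0$); when $n=k-1$ only the second sum appears.

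Finally, a one-line calculation converts this into the desired form. Using $\binom{k-1}{2}-\binom{k-2}{2}=k-2$ and $(k-1)(n-k+2)-(k-1)(n-k+1)=k-1$, one checks $(k-1)(n-k+1)+\binom{k-1}{2}=(k-1)(n-k+2)+\binom{k-2}{2}-1$, so $|E(G)|\le(k-1)(n-k+2)+\binom{k-2}{2}-1$, which is what we wanted. I do not anticipate a genuine obstacle here; the only point requiring a little care is the treatment of the last $k-1$ deletions, where the trivial bound $n-i$ beats $k-1$ --- it is precisely this refinement over the crude bound of $k-1$ edges per step that produces the sharp additive constant $\binom{k-2}{2}$ (and hence makes ``one more edge'' the meaningful threshold studied in the rest of the paper).
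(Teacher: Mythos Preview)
Your proof is correct and is essentially the same argument as the paper's: both use the standard peeling/degeneracy idea that a graph with no subgraph of minimum degree at least $k$ can be dismantled one vertex at a time, each removal costing at most $k-1$ edges. The paper packages this as a short induction on $n$ (with the base case $n=k-1$ being vacuous), while you unroll the induction into an explicit edge count over the $n$ deletions; your careful handling of the last $k-1$ steps corresponds exactly to the paper's vacuous base case.
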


This fact can be proved very easily by induction: For $n=k-1$ it is vacuously true, because there are no graphs on $k-1$ vertices with the given number of edges. For any $n\geq k$ given a graph with at least $(k-1)(n-k+2)+{k-2\choose 2}$ edges that does not have minimum degree at least $k$, we can delete a vertex of degree at most $k-1$. Then we obtain a graph with $n-1$ vertices and at least $(k-1)((n-1)-k+2)+{k-2\choose 2}$ edges and we can apply the induction assumption.

Erd\H{o}s, Faudree, Rousseau and Schelp \cite{erdos2} also observed that the bound given in Fact \ref{fact1} is sharp and that for each $n\geq k+1$ there exist graphs on $n$ vertices with $(k-1)(n-k+2)+{k-2\choose 2}$ edges, that do not have any subgraphs of minimum degree at least $k$ on fewer than $n$ vertices (an example for such a graph is the generalized wheel formed by a copy of $K_{k-2}$ and a copy of $C_{n-k+2}$ with all edges in between). However, they conjectured in \cite{erdos2} that having just one additional edge implies the existence of a significantly smaller subgraph of minimum degree at least $k$:

\begin{conj}\label{conject} For every $k\geq 2$ there exists $\eps_k>0$ such that each graph on $n\geq k-1$ vertices with
$$(k-1)(n-k+2)+{k-2\choose 2}+1$$
edges contains a subgraph on at most $(1-\eps_k)n$ vertices and with minimum degree at least $k$.
\end{conj}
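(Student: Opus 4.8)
The plan is to recast the problem via an ``edge surplus'', make routine reductions to a near-extremal situation, and then run an iterative reduction process extending ideas of Mousset, Noever and \v{S}kori\'{c}. For a graph $G$ and $S\su V(G)$ write $D_G(S):=e(G[S])-(k-1)|S|$, and set $c_k:=(k-1)(k-2)-\binom{k-2}{2}=\tfrac12(k-2)(k+1)$. Then the hypothesis $e(G)\ge(k-1)(n-k+2)+\binom{k-2}{2}+1$ says exactly $D_G(V(G))\ge 1-c_k$, while the hypothesis of Fact~\ref{fact1} for an $m$-vertex set $S$ with $m\ge k-1$ says exactly $D_G(S)\ge -c_k$. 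So it suffices to produce a set $S$ with $k-1\le|S|\le(1-\eps_k)n$ that either induces minimum degree $\ge k$ or merely satisfies $D_G(S)\ge -c_k$ — in the latter case Fact~\ref{fact1} applied to $G[S]$ yields the desired subgraph of minimum degree $\ge k$ on at most $|S|$ vertices. The single additional edge is precisely what makes $D_G(V(G))$ strictly exceed $-c_k$, the surplus realised by the generalized wheel; so the full vertex set is the only set one gets for free, and the whole task is to shrink it.

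\textbf{Reductions.} First I would reduce to a minimal counterexample. Deleting from a set a vertex with fewer than $k$ neighbours in it cannot decrease $D_G$, so passing to the $k$-core either empties $G$ (impossible by Fact~\ref{fact1}) or leaves a nonempty induced subgraph with minimum degree $\ge k$ and at least the same surplus; combined with the additivity of $D_G$ over components, with deletion of a surplus edge, and with minimality (first in $n$, then in $e(G)$), this lets us assume that $G$ is connected, that $\delta(G)\ge k$, that $e(G)=(k-1)(n-k+2)+\binom{k-2}{2}+1$ exactly, that $n$ is large, and that every graph on fewer vertices satisfying the edge hypothesis has a subgraph of minimum degree $\ge k$ on at most a $(1-\eps_k)$-fraction of its vertices. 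In particular $\delta(G)\le 2k-2$ (indeed $\le 2k-3$ when $k\ge 3$), since a larger minimum degree would force the average degree — hence the surplus — to be linear in $n$; so the high-minimum-degree regime is automatically excluded from the minimal case.

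\textbf{The structural core.} Here $G$ is sparse, with average degree just above $2(k-1)$ and exactly one edge more than the extremal bound, and the aim is a contradiction. Extending the reduction method of Mousset, Noever and \v{S}kori\'{c}, I would repeatedly remove a bounded-degree vertex together with a small ``patch'' placed on its neighbourhood, arranged so that the surplus and the non-existence of a small good set are preserved, and so that a small subgraph of minimum degree $\ge k$ in a reduced graph can be lifted back to one in $G$ with only a bounded increase in order. The target is a dichotomy: either, at some stage, a bounded-degree vertex has a ``non-generic'' neighbourhood — a dense local configuration, or one linking two far-apart ``parts'' of the graph — which can be converted directly into a set $S$ with $k-1\le|S|\le(1-\eps_k)n$ and $D_G(S)\ge -c_k$, contradicting minimality; or the process runs to completion and pins $G$ down as (essentially) a blown-up generalized wheel — a set $W$ of $O_k(1)$ vertices such that $G-W$ has bounded maximum degree and almost every vertex outside $W$ has degree exactly $k$, with neighbourhood a fixed subset of $W$ together with two consecutive vertices on a ``path-like'' strand.

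\textbf{Conclusion and the main difficulty.} In this rigid case the surplus $D_G(V(G))=1-c_k$, one more than the generalized wheel puts on this skeleton, cannot be accommodated by the forced pattern and must appear as an extra ``chord'' joining two vertices on a single path-like strand. As in the generalized-wheel-plus-one-edge example, $W$ together with the shorter arc cut off by that chord is then a set $S$ with $D_G(S)=-c_k$ and $|S|\le n/2+O_k(1)\le(1-\eps_k)n$ once $\eps_k<\tfrac12$ and $n$ is large — the contradiction we seek. I expect the crux to be the structural step: designing the reduction so that it provably terminates in exactly one of these two outcomes, so that it handles vertices of unbounded degree (which must be shown to be few and to coalesce into the bounded set $W$), and — most delicately — so that each reduction step is charged against the edge surplus \emph{exactly}, since the hypothesis provides only one unit of slack and nothing may be wasted. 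A smaller but genuine nuisance is that naive single-vertex reductions leak a constant fraction of a vertex when subgraphs are lifted back, which forces one to reduce and re-expand entire path-like strands at once and to track the constants $\eps_k$ with care.
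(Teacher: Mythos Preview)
Your reformulation via the surplus $D_G$ and the initial reductions are correct, and the endgame analysis for the generalized wheel plus a chord is fine. But the proposal has a genuine gap at its core: the dichotomy you posit --- either a local obstruction produces a small high-surplus set, or $G$ is forced into a blown-up generalized wheel --- is asserted, not proved, and you yourself flag that ``each reduction step [must be] charged against the edge surplus \emph{exactly}, since the hypothesis provides only one unit of slack'' without supplying a mechanism that achieves this. This is not a technicality: the Mousset--Noever--\v{S}kori\'{c} reduction you invoke loses a $\log n$ factor for precisely this reason, and closing that gap is the entire content of the conjecture. The rigidity claim in the terminal case (a bounded hub $W$ with path-like strands outside and a single stray chord) is likewise unsubstantiated; there is no reason a priori that near-extremal graphs are this narrow, and proving such a structure theorem would be at least as hard as the original statement.

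The paper's route is genuinely different and avoids any structural classification of $G$. It fixes $401k$ colours and iteratively builds a partial colouring of $V(G)$ so that deleting any single colour class leaves minimum degree $\ge k$; once a constant fraction of vertices is coloured, some class has size $\ge \eps_k n$ and we are done. The colouring is grown over a dyadic grouping of the maximal good sets, and the engine for each step is Lemma~\ref{mainlem} (an extension of Lemma~2.7 in \cite{zurich}): given a graph $H$ with no min-degree-$k$ subgraph, it produces a small exceptional set $S\su V_{\le k-1}(H)$ and disjoint ``blocker'' sets $B_v$ such that for \emph{any} supergraph $\tH$ with $V_{\le k-1}(\tH)\su V_{\le k-1}(H)\setminus S$, one can delete a subset of $\bigcup B_v$ and restore minimum degree $\ge k$ without increasing $(k-1)v-e$. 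This uniform control over what must be deleted --- independent of which supergraph $\tH$ one faces --- is exactly the exact-charging device your outline is missing, and the disjointness of the $B_v$ is what lets different colour classes be extended without collision.
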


According to \cite{erdos2}, originally this was a conjecture of Erd\H{o}s for $k=3$. He also included the conjecture for $k=3$ in a list of his favourite problems in graph theory \cite[p.~13]{erdos1}.

Erd\H{o}s, Faudree, Rousseau and Schelp \cite{erdos2} made progress towards Conjecture \ref{conject} and proved that there is a subgraph with minimum degree at least $k$ with at most $n-\lfloor\sqrt{n}/\sqrt{6k^{3}}\rfloor$ vertices. Mousset, Noever and \v{S}kori\'{c} \cite{zurich} improved this to $n-n/(8(k+1)^5\log_2 n)$.

The goal of this paper is to prove Conjecture \ref{conject}. More precisely, we prove the following theorem.

\begin{theo}\label{thm} Let $k\geq 2$ and let $1\leq t\leq \frac{(k-2)(k+1)}{2}-1$ be an integer. Then every graph on $n\geq k-1$ vertices with at least $(k-1)n-t$ edges contains a subgraph on at most
$$\left(1-\frac{1}{\max(10^{4}k^2, 100kt)}\right)n$$
vertices and with minimum degree at least $k$.
\end{theo}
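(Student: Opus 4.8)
The plan is to follow, and quantitatively sharpen, the cascade-and-amortization method of Mousset, Noever and \v{S}kori\'{c}~\cite{zurich}. Write $\eps:=1/\max(10^{4}k^{2},100kt)$; since the constraint on $t$ is vacuous for $k=2$ we may assume $k\ge3$, and if $\eps n<1$ the conclusion only asks for a \emph{proper} minimum-degree-$k$ subgraph, which will follow readily from the count below, so we may assume $\eps n\ge1$. First I would make the routine reductions: (i) pass to a connected component carrying at least $(k-1)|V|-t$ edges (one exists as $t\ge1$, and a small minimum-degree-$k$ subgraph of it is small in $G$); (ii) pass to the $k$-core $G'$ of the result — deleting a vertex of degree $\le k-1$ does not decrease $e-(k-1)|V|$, so $e(G')\ge(k-1)|V(G')|-t$, while $\delta(G')\ge k$ and $G'\ne\es$ by Fact~\ref{fact1}, since $(k-1)n-t$ exceeds its threshold by $\tfrac{(k-2)(k+1)}{2}-t\ge1$; (iii) if $|V(G')|\le(1-\eps)n$ we are done, so put $N:=|V(G')|>(1-\eps)n$. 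It now suffices to find a nonempty minimum-degree-$\ge k$ subgraph of $G'$ on at most $N-\eps n\le(1-\eps)n$ vertices. Two further cases are easy and I would dispose of them first: if $G'$ is dense (say $e(G')\ge 2kn$) its $2k$-core is a nonempty minimum-degree-$\ge2k$ subgraph on $\le n$ vertices, from which a minimum-degree-$k$ subgraph on $\le(1-\eps)n$ vertices is obtained by a standard random-deletion-and-cleanup argument; and if $\delta(G')>k$ one brings the minimum degree down to $k$ by a bounded number of vertex deletions (each affordable while the surplus below stays positive) or else the degrees are so tightly pinned that $G'$ is essentially regular of degree $\approx 2(k-1)$, where a small subgraph is again direct. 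So assume from now on that $\delta(G')=k$ and that $G'$ is connected and not dense. For $v\in V(G')$ let $\sh(v)$ be the set of vertices deleted by the cascade that removes $v$ and then iteratively removes vertices of current degree $<k$; then $G'\mi\sh(v)$ always has minimum degree $\ge k$.

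The quantitative engine is a deficiency count for \emph{total} cascades. Suppose $\sh(v)=V(G')$, with deletion order $u_{1}=v,u_{2},\dots,u_{N}$, and for $j\ge2$ put $\mathrm{def}_{j}:=(k-1)-\deg_{G'\mi\{u_{1},\dots,u_{j-1}\}}(u_{j})\ge0$. Counting the edges destroyed at each deletion gives $\sum_{j\ge2}\mathrm{def}_{j}=(k-1)(N-1)+\deg(v)-e(G')\le\deg(v)-(k-1)+t$, whereas the last $k-1$ vertices $u_{N-k+2},\dots,u_{N}$ have deficiencies at least $1,2,\dots,k-1$, so $\sum_{j\ge2}\mathrm{def}_{j}\ge\binom{k}{2}$. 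Hence a total cascade forces $\deg(v)\ge\binom{k}{2}+(k-1)-t=\tfrac{(k-1)(k+2)}{2}-t$, and by the hypothesis $t\le\tfrac{(k-2)(k+1)}{2}-1$ the right-hand side is at least $k+1$ — this is precisely where the hypothesis on $t$ is used. In particular no vertex of degree $k$ has a total cascade; moreover the same accounting, carried out after a few preliminary deletions, only ever loses the modest surplus $\tfrac{(k-2)(k+1)}{2}-1-t$, so one must avoid deleting vertices of degree $>k$ (each such deletion shrinks $e-(k-1)|V|$, enlarging the effective $t$).

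Now the dichotomy. Fix a vertex $v$ of degree $k$; by the previous paragraph its cascade is not total, so $G'\mi\sh(v)$ is a nonempty subgraph with minimum degree $\ge k$. If some vertex $v$ of degree $k$ has $|\sh(v)|\ge\eps n$, then $G'\mi\sh(v)$ has at most $N-\eps n\le(1-\eps)n$ vertices and we are done. So the remaining — and main — case is that \emph{every} degree-$k$ vertex $v$ satisfies $|\sh(v)|<\eps n$: all cascades through the low-degree part of $G'$ collapse only tiny pieces.

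This last case is where I expect the real difficulty, and where the machinery of~\cite{zurich} must be set up and pushed. Here I would prove that $G'$ is rigid — close to the extremal configuration of Fact~\ref{fact1}. Roughly: since $G'$ is not dense, the total excess $\sum_{v}(\deg_{G'}(v)-k)=2e(G')-kN\ge(k-2)N-2t$ concentrates on a small "hub" structure $\mathcal H$ of large-degree vertices, and, using the edge count, one shows that outside an exceptional set of only $O(k^{2})$ vertices and edges $G'$ is a generalized wheel — a bounded-size set $\mathcal H$ of hubs, each joined to a positive fraction of $V(G')$, together with a graph $F:=G'-\mathcal H$ of bounded maximum degree, essentially the extremal configuration of Fact~\ref{fact1}. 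Because $e(G')\ge(k-1)N-t$ exceeds the edge count of the plain generalized wheel by at least $\tfrac{(k-2)(k+1)}{2}-t\ge1$, the sparse part $F$ carries at least one edge more than a disjoint union of paths would, hence a short cycle $C_{\ell}$; taking $C_{\ell}$ together with an appropriate subset of the hubs produces a sub-wheel of minimum degree $\ge k$ on $O(k)+\ell$ vertices, and it remains to force $\ell$ small enough that this is at most $(1-\eps)n$. The hard part is making all of this quantitative against the explicit constants $10^{4}k^{2}$ and $100kt$: showing that $\mathcal H$ really has size $O(k)$, controlling the $O(k^{2})$ exceptional vertices and edges so that the chosen cycle is surrounded by genuine wheel structure (and so that the single surplus edge is not swamped by the exceptions), ruling out degenerate configurations such as too few hubs or all short cycles hiding inside exceptional regions, and bounding the cycle length. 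This is exactly the point at which the "linked configuration" bookkeeping of Mousset, Noever and \v{S}kori\'{c} has to be extended so that it works for the whole range $1\le t\le\tfrac{(k-2)(k+1)}{2}-1$ rather than only for $t=\tfrac{(k-2)(k+1)}{2}-1$.
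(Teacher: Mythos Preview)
Your reductions and the deficiency count for total cascades are fine, and the dichotomy you set up (some cascade from a degree-$k$ vertex is large, versus all such cascades are tiny) is the right starting point. The gap is in the ``all cascades are tiny'' case, which is indeed the whole problem. Your plan there is to prove a rigidity statement --- that $G'$ is a generalized wheel up to $O(k^{2})$ exceptional vertices and edges --- and then to locate a short cycle in the sparse part $F$ to build a small sub-wheel. Neither step is justified, and the second one does not work as stated. Even granting a hub set $\mathcal{H}$ of bounded size, the edge surplus over the extremal wheel count is only $\tfrac{(k-2)(k+1)}{2}-t\ge 1$; a single surplus edge in $F$ forces a cycle, but a cycle of length up to roughly $|V(F)|/2$, not $O(k)$ or even $(1-\eps)n$. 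In the true extremal example the sparse part is already one long cycle; adding one chord only halves it. So ``find a short cycle and attach hubs'' cannot by itself produce a subgraph on $(1-\eps)n$ vertices with $\eps=\Theta(k^{-2})$. You would need to iterate, and each iteration spends surplus you do not have. More basically, the structural claim that $G'$ is close to a generalized wheel when all cascades are tiny is asserted but not argued; nothing in the small-cascade hypothesis pins down the high-degree vertices into a bounded hub, and the listed ``hard parts'' are the entire content of the proof.

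The paper does not attempt any such structure theorem. Instead it stays with the cascades (its ``good sets'') and turns the tiny-cascade case into a \emph{colouring} argument: fix $\Theta(k)$ colours and iteratively colour entire maximal good sets so that for every colour $i$ the graph $G-X_{i}$ still has minimum degree $\ge k$. The maximal good sets are organised dyadically into layers $\C_{1},\dots,\C_{J}$ by size; at each stage one colours most of the next layer, and a technical lemma (an extension of \cite[Lemma~2.7]{zurich}) supplies, for each colour, a controlled clean-up set inside the current graph $H$ so that after removing the colour class plus its clean-up the minimum degree is restored, with the clean-up sets for different colours provably disjoint. Since the good sets cover $\Omega(n/k)$ vertices (this uses that there are $\ge n/(3k)$ degree-$k$ vertices, which your outline does not secure), some colour class has size $\ge n/(10^{4}k^{2})$, giving the subgraph. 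No wheel structure is ever invoked; the whole argument is bookkeeping on $\e_{G}(X)-(k-1)|X|$ across the hierarchy.
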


For $t=\frac{(k-2)(k+1)}{2}-1$ we have
$$(k-1)n-t=(k-1)n-\frac{(k-2)(k+1)}{2}+1=(k-1)(n-k+2)+{k-2\choose 2}+1$$
and so Theorem \ref{thm} implies Conjecture \ref{conject} with
$$\eps_k=\frac{1}{\max\left(10^{4}k^2, 100k\left(\frac{(k-2)(k+1)}{2}-1\right)\right)}>\frac{1}{10^{4}k^3}.$$

On the other hand, for example for $t=1$ we obtain the following statement: Every graph on $n\geq k-1$ vertices with at least $(k-1)n-1$ edges contains a subgraph on at most
$$\left(1-\frac{1}{10^{4}k^2}\right)n$$
vertices and with minimum degree at least $k$. Thus, the presence of one additional edge compared to the number in Fact \ref{fact1} implies the existence of a subgraph with minimum degree at least $k$ on $(1-\eps)n$ vertices with $\eps=\Omega(k^{-3})$, while the presence of $\frac{(k-2)(k+1)}{2}$ additional edges (which is a fixed number with respect to $n$) already gives $\eps=\Omega(k^{-2})$.

The basic approach to proving Theorem \ref{thm} is to assign colours to some vertices, such that for every colour the subgraph remaining after deleting all vertices of that colour has minimum degree at least $k$. If we can ensure that sufficiently many vertices get coloured (and the number of colours is fixed), then in this way we find a significantly smaller subgraph with minimum degree at least $k$.

Our proof relies on and extends the ideas of Mousset, Noever and \v{S}kori\'{c} in \cite{zurich}, although they do not use a colouring approach in their argument. We construct our desired colouring iteratively, and apply the techniques of Mousset, Noever and \v{S}kori\'{c} in every step of the iteration.

Note that if $G$ has a subgraph with minimum degree at least $k$, then the induced subgraph with the same vertices also has minimum degree at least $k$. Thus, in all the statements above we can replace `subgraph' by `induced subgraph'.

\textit{Organization.} We prove Theorem \ref{thm} in Section \ref{sect2} apart from the proof of a certain lemma. This lemma is proved in Section \ref{sect3} assuming another lemma, Lemma \ref{mainlem}, that is stated at the beginning of Section 3. Lemma \ref{mainlem} is a key tool in our proof and an extension of Lemma 2.7 in Mousset, Noever and \v{S}kori\'{c}'s paper \cite{zurich}. We prove Lemma \ref{mainlem} in Section \ref{sect5} after some preparations in Section \ref{sect4}.

\textit{Notation.} All graphs throughout this paper are assumed to be finite. Furthermore, all subgraphs are meant to be non-empty (but they may be equal to the original graph). An induced subgraph is called proper if it has fewer vertices than the original graph. When we say that a graph has minimum degree at least $k$, we implicitly also mean to say that the graph is non-empty.

For a graph $G$ let $V(G)$ denote the set of its vertices, $v(G)$ the number of its vertices and $e(G)$ the number of its edges. For any integer $i$, let $V_{\leq i}(G)$ be the set of vertices of $G$ with degree at most $i$ and $V_{i}(G)$ the set of vertices of $G$ with degree equal to $i$.

For a subset $X\su V(G)$ let $G-X$ be the graph obtained by deleting all vertices in $X$ from $G$. Let $\e_G(X)$ denote the number of edges of $G$ that are incident with at least one vertex in $X$, i.e.\ $\e_G(X)=e(G)-e(G-X)$. Call a vertex $v\in V(G)$ adjacent to $X$, if $v$ is adjacent to at least one vertex in $X$. In this case, call $X$ a neighbour of $v$ and $v$ a neighbour of $X$. Finally, if $\mathcal{X}$ is a collection of disjoint subsets $X\su V(G)$, then $G-\mathcal{X}$ denotes the graph obtained from $G$ by deleting all members of $\mathcal{X}$.

In general, we try to keep the notation and the choice of variables similar to \cite{zurich}, so that the reader can see the connections to the ideas in \cite{zurich} more easily.

\section{Proof of Theorem \ref{thm}} \label{sect2}

Let $k\geq 2$ be fixed throughout the paper. Furthermore, let $1\leq t\leq \frac{(k-2)(k+1)}{2}-1$ be an integer and let 
$$\eps=\frac{1}{\max(10^{4}k^2, 100kt)}.$$

We prove Theorem \ref{thm} by induction on $n$. Note that the theorem is vacuously true for $n=k-1$, because in this case a graph on $n$ vertices can have at most
\[{k-1\choose 2}=(k-1)^2-\frac{(k-1)k}{2}=(k-1)n-\frac{(k-2)(k+1)}{2}-1<(k-1)n-t\]
edges. So from now on, let us assume that $n\geq k$ and that Theorem \ref{thm} holds for all smaller values of $n$.

Consider a graph $G$ on $n$ vertices with $e(G)\geq (k-1)n-t$ edges. We would like to show that $G$ contains a subgraph on at most $(1-\eps)n$ vertices and with minimum degree at least $k$. So let us assume for contradiction that $G$ does not contain such a subgraph.

\begin{claim}\label{claim-sets-many-edges}For every subset $X\su V(G)$ of size $1\leq \vert X\vert\leq n-k+1$, we have $\e_G(X)\geq (k-1)\vert X\vert+1$.
\end{claim}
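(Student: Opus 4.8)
The plan is to argue by contradiction at the level of the subset $X$: if $\e_G(X) \leq (k-1)|X|$ for some $X$ with $1 \leq |X| \leq n-k+1$, then deleting $X$ from $G$ should produce a smaller graph still satisfying the edge hypothesis of Theorem \ref{thm}, so that the induction hypothesis applies. Concretely, set $G' = G - X$ and $n' = v(G') = n - |X|$. Then $e(G') = e(G) - \e_G(X) \geq (k-1)n - t - (k-1)|X| = (k-1)n' - t$. Since $1 \leq |X|$ we have $n' \leq n-1$, and since $|X| \leq n-k+1$ we have $n' \geq k-1$, so $G'$ is a graph on at least $k-1$ vertices meeting the hypothesis of Theorem \ref{thm} with the same $k$ and $t$.

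By the induction hypothesis (Theorem \ref{thm} for all values smaller than $n$), $G'$ contains a subgraph $H$ on at most $(1-\eps)n'$ vertices with minimum degree at least $k$. But $H$ is also a subgraph of $G$, and $v(H) \leq (1-\eps)n' \leq (1-\eps)n$, so $H$ is a subgraph of $G$ on at most $(1-\eps)n$ vertices with minimum degree at least $k$ — contradicting our standing assumption that $G$ has no such subgraph. Hence $\e_G(X) \geq (k-1)|X| + 1$ for every such $X$, which is the claim.

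I expect essentially no serious obstacle here; the only things to be careful about are (i) verifying that $n'$ genuinely lies in the range $k-1 \leq n' \leq n-1$ where the induction hypothesis is available — this uses both bounds $1 \leq |X| \leq n-k+1$ — and (ii) checking that the bound $(1-\eps)n' \leq (1-\eps)n$ (trivially true since $\eps < 1$ and $n' \leq n$) suffices, i.e.\ that we do not need the sharper $(1-\eps)n'$. We do not: producing any subgraph of minimum degree at least $k$ on at most $(1-\eps)n$ vertices already contradicts the assumption on $G$. One subtle point worth stating explicitly is that the theorem being proved requires $n \geq k-1$, and we are in the inductive step where $n \geq k$, so $n' = n - |X| \geq k - (n-k+1)$... more simply, $|X| \leq n-k+1$ forces $n' \geq k-1$ exactly, which is the boundary case the base of the induction handles vacuously; thus the induction hypothesis is legitimately invoked at $n' = k-1$ as well (where it holds vacuously and indeed $G'$ cannot have $(k-1)n' - t$ edges, but that only means this case does not arise).
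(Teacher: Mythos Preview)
Your proof is correct and follows essentially the same approach as the paper: assume $\e_G(X)\leq (k-1)\vert X\vert$, delete $X$, check that $G'=G-X$ has $n'\geq k-1$ vertices and at least $(k-1)n'-t$ edges, apply the induction hypothesis for Theorem~\ref{thm} to obtain a subgraph on at most $(1-\eps)n'\leq (1-\eps)n$ vertices of minimum degree at least $k$, and derive a contradiction. Your extra remarks about the boundary case $n'=k-1$ are accurate but not needed beyond what the paper already assumes.
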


\begin{proof}Suppose there exists a subset $X\su V(G)$ of size $1\leq \vert X\vert\leq n-k+1$ with $\e_G(X)\leq (k-1)\vert X\vert$. Then let $G'=G-X$ be the graph obtained by deleting $X$. Note that $G'$ has $n-\vert X\vert\geq k-1$ vertices and 
\[e(G')=e(G)-\e_G(X)\geq (k-1)n-t-(k-1)\vert X\vert=(k-1)(n-\vert X\vert)-t.\]
So by the induction assumption $G'$ contains a subgraph on at most $(1-\eps)(n-\vert X\vert)\leq (1-\eps)n$ vertices and with minimum degree at least $k$. This subgraph is also a subgraph of $G$, which is a contradiction to our assumption on $G$.
\end{proof}

In particular, for every vertex $v\in V(G)$, Claim \ref{claim-sets-many-edges} applied to $X=\lbrace v\rbrace$ yields $\deg(v)=\e_G(\lbrace v\rbrace)\geq k$. Thus, the graph $G$ has minimum degree at least $k$ and $n\geq k+1$.

The following lemma is very similar to Lemma 4 in \cite{erdos2} for $\alpha=\frac{1}{3k}$. However, in contrast to \cite[Lemma 4]{erdos2}, in our situation we are not given an upper bound on the number of edges.

\begin{lem}[see Lemma 4 in \cite{erdos2}]\label{deg-k-vertices} Let $H$ be a graph on $n$ vertices with minimum degree at least $k$. If $H$ has at most $\frac{n}{3k}$ vertices of degree $k$, then $H$ has a subgraph on at most $\left(1-\frac{1}{27k^{2}}\right)n$ vertices with minimum degree at least $k$.
\end{lem}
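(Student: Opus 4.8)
The plan is to reduce the statement to finding a large vertex set whose removal does not hurt the minimum degree, and then to produce such a set by a random-selection-plus-repair argument.

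\textbf{Reduction.} Let $A=V_k(H)=V_{\leq k}(H)$ be the set of vertices of degree exactly $k$, so $|A|\leq n/(3k)$, and let $C=V(H)\setminus N_H(A)$ be the set of vertices having no neighbour of degree $k$. Then $|N_H(A)|\leq \sum_{a\in A}\deg(a)=k|A|\leq n/3$, so $|C|\geq 2n/3$ and $|C\setminus A|\geq n/2$, and there is no edge between $A$ and $C$. Consequently, for any $S\subseteq C\setminus A$ the graph $H-S$ has minimum degree at least $k$ if and only if $|N_H(u)\cap S|\leq\deg(u)-k$ for every $u\in V(H)\setminus S$: this is automatic for $u\in A$ (which has no neighbour in $C\supseteq S$), and for $u\notin A$ it is exactly the condition that $u$ keep degree $\geq k$. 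So it suffices to find $S\subseteq C\setminus A$ with $|S|\geq n/(27k^2)$ satisfying this inequality for all $u$ (then $H-S$ is also non-empty, since it contains $A$ unless $A=\es$, an easy separate case).

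\textbf{Main step: random selection and repair.} Pick each vertex of $C\setminus A$ independently with probability $p=\Theta(1/k^2)$ to form a random set $S_0\subseteq C\setminus A$. For each $u$ with $|N_H(u)\cap S_0|>\deg(u)-k$, delete $\bigl(|N_H(u)\cap S_0|-(\deg(u)-k)\bigr)$ of its neighbours from $S_0$; the resulting set $S_0'$ satisfies the required inequality for all $u$ by construction, and the number of deleted vertices is at most the total overflow $\sum_u\bigl(|N_H(u)\cap S_0|-(\deg(u)-k)\bigr)^+$, so $|S_0'|\geq|S_0|-\sum_u(\cdots)^+$. Now $\mathbb{E}[|S_0|]=p\,|C\setminus A|=\Theta(n/k^2)$, while for $u$ with $\deg(u)=k+j$ the overflow at $u$ is nonzero only if at least $j+1$ of its $\leq k+j$ neighbours are selected, an event of probability at most $\binom{k+j}{j+1}p^{j+1}$; the dominant case is $j=1$, contributing expected overflow $O\!\bigl(\binom{k+1}{2}p^2\bigr)=O(k^2p^2)$ per such vertex and hence $O(k^2p^2 n)$ over all vertices. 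Choosing $p=\Theta(1/k^2)$ to maximise $pn-O(k^2p^2n)$ then yields $\mathbb{E}[|S_0'|]\geq\mathbb{E}[|S_0|]-\mathbb{E}[\text{total overflow}]\geq n/(27k^2)+1$ once $n\geq C_0k^2$ for a suitable absolute constant $C_0$ (the constants $1/(3k)$ and $1/(27k^2)$ leave enough room); hence some outcome has $|S_0'|\geq n/(27k^2)$, as desired.

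\textbf{Main obstacle, and the small cases.} The delicate point is bounding the expected total overflow from \emph{high}-degree vertices. In the analogous Lemma~4 of \cite{erdos2} one has an a priori bound on $e(H)$, forcing most vertices to have small degree; here there is no such bound (indeed $H$ could be a clique), so one cannot simply discard high-degree vertices. The resolution is that a vertex $u$ of large degree has a large buffer $\deg(u)-k$, so it overflows only when far more than the typical $p\deg(u)$ of its neighbours are selected; a Chernoff-type bound makes its expected overflow exponentially small in $\deg(u)$, which is negligible compared with $n/k^2$ even when summed over all (possibly $\Omega(n)$) such vertices, as long as $k$ is not too small. Thus the binding case is $\deg(u)$ close to $k$, exactly as in \cite{erdos2}. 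Finally, the range $n<C_0k^2$ is handled directly: removing any vertex of $C$ preserves minimum degree $\geq k$, and since $\bigl(1-\tfrac1{27k^2}\bigr)n\geq n-2$ whenever $n\leq 54k^2$, it suffices to perform at most two such removals; for the second removal one checks, by choosing the first removed vertex to have few neighbours of degree $k+1$, that the corresponding set $C$ of the intermediate graph is still non-empty.
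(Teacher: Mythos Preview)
Your approach is essentially correct and genuinely different from the paper's. Both start the same way---restricting attention to vertices with no degree-$k$ neighbour (your $C$, the paper's complement of $T_1$)---but diverge on how to handle high-degree vertices, which is exactly the obstacle you flag.

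The paper sidesteps the high-degree issue entirely by first passing to a counterexample with the \emph{minimum number of edges}: in such a graph there can be no edge between two vertices of degree $\geq k+2$ (else delete it and contradict minimality), which forces $e(H)\leq (k+1)\,|V_{\leq k+1}(H)|\leq (k+1)n$ and hence at most $n/3$ vertices of degree $\geq 9k$. One then has a set $T$ of size $\geq n/3$ consisting of vertices of degree $<9k$ with no degree-$k$ neighbour, and a simple deterministic greedy red/blue colouring finishes: colour a vertex of $T$ red, and for each of its $<9k$ neighbours reserve up to $k$ blue protectors; each red vertex costs at most $9k^2$ vertices total, yielding $\geq n/(27k^2)$ red vertices whose removal preserves minimum degree $k$.

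Your route avoids the extremal reduction and instead treats high-degree vertices directly via a Chernoff tail: a vertex of degree $d$ has buffer $d-k$, far exceeding the $\approx dp=O(d/k^2)$ selected neighbours, so its expected overflow is exponentially small in $d$. This works, and with $p\approx 1/(2k(k+1))$ one does get $\mathbb{E}[|S_0'|]\gtrsim n/(8k(k+1))>n/(27k^2)$, though you leave the constants unchecked. The repair step is also slightly more delicate than you write: vertices deleted from $S_0$ themselves join $V(H)\setminus S_0'$ and must satisfy the degree condition, so the overflow sum bounding the number of deletions must range over \emph{all} $u\in V(H)$, not just $u\notin S_0$; a potential-function argument then makes the bound rigorous.

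The trade-off: the paper's minimal-edge trick is a one-line reduction that makes everything afterward deterministic and short, and delivers the exact constant $1/(27k^2)$ with no effort. Your probabilistic argument is more self-contained and would adapt more readily to variants of the statement, but getting the precise constant requires tracking the $j\geq 2$ overflow terms and the Chernoff tails through to the end, which you have only sketched.
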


One can prove this Lemma in a similar way as \cite[Lemma 4]{erdos2}. For the reader's convenience, we provide a proof in the appendix.

Recall our assumption that $G$ does not have a subgraph on at most $(1-\eps)n$ vertices with minimum degree at least $k$, and also recall that $G$ has minimum degree at least $k$. By Lemma \ref{deg-k-vertices} the graph $G$ must therefore have at least $\frac{n}{3k}$ vertices of degree $k$. Furthermore, as $\eps<\frac{1}{2}$, the graph $G$ must be connected.

A very important idea for the proof is the notion of \emph{good sets} from \cite{zurich}. Both the following definition and the subsequent properties are taken from \cite{zurich}. We repeat the proofs here, because our statements differ slightly from the ones in \cite{zurich}, but the proofs are essentially the same.

\begin{defi}[Definition 2.1 in \cite{zurich}]\label{def-good-set} A \emph{good set} in $G$ is a subset of $V(G)$ constructed according to the following rules:
\begin{itemize}
\item If $v$ is a vertex of degree $k$ in $G$, then $\lbrace v\rbrace$ is a good set.
\item If $A$ is a good set and $v\in V(G)\mi A$ with $\deg_{G-A}(v)\leq k-1$, then $A\cup \lbrace v\rbrace$ is a good set.
\item If $A$ and $B$ are good sets and $A\cap B\neq \es$, then $A\cup B$ is a good set.
\item If $A$ and $B$ are good sets and there is an edge connecting a vertex in $A$ to a vertex in $B$, then $A\cup B$ is a good set.
\end{itemize}
\end{defi}

Clearly, each good set is non-empty.

\begin{lem}[see Claim 2.2(i) in \cite{zurich}]\label{lemma-edges-good-set} If $D$ is a good set in $G$ of size $\vert D\vert\leq n-k+1$, then $\e_G(D)\leq (k-1)\vert D\vert+1$.
\end{lem}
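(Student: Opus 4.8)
The plan is to induct on the construction of the good set $D$, following the four rules in Definition \ref{def-good-set}, and to combine this with Claim \ref{claim-sets-many-edges} to get the matching lower bound on $\e_G(D)$ that forces the inequality to be tight at every stage. The key observation is that for any subset $X$ of size $1\le |X|\le n-k+1$, Claim \ref{claim-sets-many-edges} gives $\e_G(X)\ge (k-1)|X|+1$; so if I can show by the construction that $\e_G(D)\le (k-1)|D|+1$, this is actually an equality, and moreover every good subset appearing along the way also satisfies it with equality. I will carry the stronger bookkeeping along: at each construction step, assuming all the good sets used satisfy $|\cdot|\le n-k+1$ (so that Claim \ref{claim-sets-many-edges} applies to them), show the newly formed set $D$ satisfies $\e_G(D)\le (k-1)|D|+1$.

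The four cases: \textbf{(i)} If $D=\{v\}$ with $\deg_G(v)=k$, then $\e_G(D)=k=(k-1)\cdot 1+1$, as required. \textbf{(ii)} If $D=A\cup\{v\}$ with $A$ good, $v\notin A$, and $\deg_{G-A}(v)\le k-1$: here $\e_G(D)=\e_G(A)+\deg_{G-A}(v)\le \big((k-1)|A|+1\big)+(k-1)=(k-1)|D|+1$, using the inductive bound on $\e_G(A)$ (valid since $|A|\le |D|\le n-k+1$). \textbf{(iii)} If $D=A\cup B$ with $A,B$ good and $A\cap B\ne\es$: by inclusion–exclusion $\e_G(D)\le \e_G(A)+\e_G(B)-\e_G(A\cap B)$. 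Now $A$, $B$, $D$ all have size at most $n-k+1$, and $A\cap B$ is a nonempty subset of size at most $n-k+1$, so Claim \ref{claim-sets-many-edges} gives $\e_G(A\cap B)\ge (k-1)|A\cap B|+1$; combined with the inductive upper bounds on $\e_G(A),\e_G(B)$ this yields $\e_G(D)\le (k-1)(|A|+|B|-|A\cap B|)+1=(k-1)|D|+1$. \textbf{(iv)} If $D=A\cup B$ with $A,B$ good, disjoint, and joined by an edge $e$: then $A\cap B=\es$ so the edges counted by $\e_G(A)$ and $\e_G(B)$ overlap only in edges between $A$ and $B$; since $A,B$ are disjoint, $\e_G(A)+\e_G(B)$ counts every such $A$–$B$ edge exactly twice (once as incident to $A$, once as incident to $B$), hence $\e_G(D)=\e_G(A)+\e_G(B)-(\#\text{edges between }A\text{ and }B)\le \e_G(A)+\e_G(B)-1\le (k-1)|A|+1+(k-1)|B|+1-1=(k-1)|D|+1$.

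The one subtlety — and the main thing to be careful about rather than a genuine obstacle — is the size hypothesis needed to invoke Claim \ref{claim-sets-many-edges} on the intermediate sets in cases (iii) and (iv) (specifically on $A\cap B$ in case (iii), and implicitly in justifying that the good sets $A,B$ built en route themselves satisfy the bound). Since $D$ has size $\le n-k+1$ by hypothesis and all of $A$, $B$, $A\cap B$ are subsets of $D$, they automatically have size $\le n-k+1$, so Claim \ref{claim-sets-many-edges} applies throughout; the induction on the construction therefore goes through cleanly with no further case analysis. I would phrase the induction as: "Among all derivations of good sets of size $\le n-k+1$, suppose $D$ is obtained from good sets already shown to satisfy the bound," so that each rule's verification is exactly one of the four displays above.
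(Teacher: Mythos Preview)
Your proof is correct and follows essentially the same approach as the paper: induction on the construction rules of Definition \ref{def-good-set}, with Claim \ref{claim-sets-many-edges} applied to $A\cap B$ in rule (iii) and the disjoint case of rule (iv) handled by subtracting at least one cross edge. The paper's only additional remark is to explicitly reduce rule (iv) to the disjoint case by noting that $A\cap B\neq\es$ is already covered by rule (iii), which you effectively do as well.
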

\begin{proof}We prove the lemma by induction on the construction rules in Definition \ref{def-good-set}. For the first rule, observe that $\e_G(\lbrace v\rbrace)=\deg_G(v)= (k-1)+1$ for every vertex $v$ of degree $k$. For the second rule, assume that $\e_G(A)\leq (k-1)\vert A\vert+1$ and $\deg_{G-A}(v)\leq k-1$, then
$$\e_G(A\cup \lbrace v\rbrace)=\e_G(A)+\deg_{G-A}(v)\leq (k-1)(\vert A\vert+1)+1.$$
For the third rule, assume that $\e_G(A)\leq (k-1)\vert A\vert+1$ and $\e_G(B)\leq (k-1)\vert B\vert+1$ as well as $\vert A\vert, \vert B\vert\leq n-k+1$ and $A\cap B\neq \emptyset$. Then $1\leq \vert A\cap B\vert\leq n-k+1$ and therefore by Claim \ref{claim-sets-many-edges} we have $\e_G(A\cap B)\geq (k-1)\vert A\cap B\vert+1$. Thus,
$$\e_G(A\cup B)\leq \e_G(A)+\e_G(B)-\e_G(A\cap B)\leq (k-1)(\vert A\vert+\vert B\vert-\vert A\cap B\vert)+1=(k-1)\vert A\cup B\vert+1.$$
For the fourth rule, assume that $\e_G(A)\leq (k-1)\vert A\vert+1$ and $\e_G(B)\leq (k-1)\vert B\vert+1$ and that there is at least one edge between $A$ and $B$. We may also assume that $A$ and $B$ are disjoint, because the case $A\cap B\neq \emptyset$ is already covered by the third rule. Now
$$\e_G(A\cup B)\leq \e_G(A)+\e_G(B)-1\leq (k-1)(\vert A\vert+\vert B\vert)+1=(k-1)\vert A\cup B\vert+1.$$
This finishes the proof of the lemma.\end{proof}

\begin{claim}[see Claim 2.2(ii) in \cite{zurich}]\label{remove-good-set-1} If $D$ is a good set in $G$ of size $\vert D\vert\leq n-k+1$, then $G-D$ contains a subgraph of minimum degree at least $k$.
\end{claim}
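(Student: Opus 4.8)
The plan is to contradict our standing assumption on $G$ by showing that, if $G-D$ failed to contain a subgraph of minimum degree at least $k$, then we could delete a slightly larger vertex set whose edge count is still small enough to apply the induction hypothesis. Concretely, suppose for contradiction that $G-D$ has no subgraph of minimum degree at least $k$. Note that $G-D$ is non-empty, since $D$ is a good set of size at most $n-k+1$, so $G-D$ has at least $k-1$ vertices. Iteratively delete from $G-D$ any vertex of degree at most $k-1$; since at every stage the current graph is not of minimum degree at least $k$ (otherwise we would have found the forbidden subgraph), this process only stops once every vertex is gone. Let $D'$ be the set of all vertices deleted in this way, so $V(G) = D \cup D'$ and the deletions exhaust all of $G-D$.

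The key point is to bound $\e_G(D \cup D')$. Since the vertices of $D'$ were removed one at a time, each having degree at most $k-1$ in the graph remaining at the moment of its removal (which is $G - D - (\text{previously removed vertices})$), we get
\[
\e_G(D\cup D') \;=\; \e_G(D) + \sum_{v\in D'} \deg_{G-D-(\cdots)}(v) \;\leq\; \e_G(D) + (k-1)\,|D'|.
\]
Now apply Lemma \ref{lemma-edges-good-set} to $D$ — which is legitimate precisely because $|D|\leq n-k+1$ — to get $\e_G(D)\leq (k-1)|D|+1$, hence $\e_G(D\cup D')\leq (k-1)(|D|+|D'|)+1 = (k-1)n+1$. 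But $\e_G(D\cup D') = e(G) - e(G - (D\cup D')) = e(G)$, since deleting all of $V(G)$ leaves no edges. Therefore $e(G)\leq (k-1)n+1$, which is consistent with $e(G)\geq (k-1)n-t$ and so gives no contradiction by itself; the argument as stated is too lossy.

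To fix this I would instead stop the deletion process at the right moment rather than running it to exhaustion. Since $G - (D\cup D')$ has no subgraph of minimum degree at least $k$ at every stage, I can continue deleting low-degree vertices until the remaining graph has at most $k-1$ vertices (it cannot be emptied while retaining $k$ vertices of min degree $\geq k$, but also a graph on $\leq k-1$ vertices trivially has no such subgraph, so the process is forced all the way down). Letting $D'$ be the deleted vertices at the point when exactly $k-1$ vertices remain — call the remaining set $R$, with $|D|+|D'| = n-(k-1)$ — the bound above becomes $\e_G(D\cup D')\leq (k-1)(n-k+1)+1$, and crucially $|D\cup D'| = n-k+1\leq n-k+1$, so in fact $\e_G(D\cup D')\leq (k-1)|D\cup D'| = (k-1)(n-k+1)$ would contradict Claim \ref{claim-sets-many-edges} unless the ``$+1$'' is tight. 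Here the honest fix is to observe that $R$ itself, having only $k-1$ vertices, satisfies $e(G[R])\leq \binom{k-1}{2}$, so $e(G)\leq \e_G(D\cup D') + \binom{k-1}{2}\leq (k-1)(n-k+1)+1+\binom{k-1}{2} = (k-1)(n-k+2)+\binom{k-2}{2}+1 \leq (k-1)n - t$ only when $t$ is at its maximum — and rechecking the inequality shows it in fact forces $e(G) < (k-1)n-t$ for our range of $t$, contradicting the hypothesis $e(G)\geq (k-1)n-t$. The main obstacle is getting this bookkeeping exactly right: tracking when to halt the deletion and combining Lemma \ref{lemma-edges-good-set}, Claim \ref{claim-sets-many-edges}, and the trivial bound on $e(G[R])$ so that the constants line up to produce a genuine contradiction with $e(G)\geq (k-1)n-t$ across the full range $1\leq t\leq \frac{(k-2)(k+1)}{2}-1$.
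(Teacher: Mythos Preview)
Your approach is salvageable but needlessly roundabout, and the proposal as written is incomplete with an arithmetic slip. What you are doing in your ``second attempt'' --- deleting low-degree vertices from $G-D$ one at a time until only $k-1$ vertices remain, and then bounding $e(G)$ --- is precisely a reproof of Fact~\ref{fact1} applied to the graph $G-D$. The paper simply invokes Fact~\ref{fact1} directly: by Lemma~\ref{lemma-edges-good-set} one has $\e_G(D)\le(k-1)|D|+1$, so
\[
e(G-D)=e(G)-\e_G(D)\ge (k-1)(n-|D|)-(t+1)\ge (k-1)((n-|D|)-k+2)+\binom{k-2}{2},
\]
using $t+1\le\tfrac{(k-2)(k+1)}{2}$, and Fact~\ref{fact1} then gives a subgraph of $G-D$ with minimum degree at least $k$. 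No contradiction argument, no iterative deletion, no appeal to Claim~\ref{claim-sets-many-edges} is needed.

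Your computation also contains an off-by-one: the identity you write as $(k-1)(n-k+1)+1+\binom{k-1}{2}=(k-1)(n-k+2)+\binom{k-2}{2}+1$ should read $(k-1)(n-k+1)+1+\binom{k-1}{2}=(k-1)(n-k+2)+\binom{k-2}{2}$, with no extra $+1$. With the correct bound, $e(G)\le (k-1)n-\tfrac{(k-2)(k+1)}{2}$, and since $t\le\tfrac{(k-2)(k+1)}{2}-1$ gives $e(G)\ge (k-1)n-\tfrac{(k-2)(k+1)}{2}+1$, the contradiction does go through cleanly for the full range of $t$. So your plan works once the arithmetic is repaired --- but it is a detour through the proof of Fact~\ref{fact1} rather than a citation of it.
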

\begin{proof} The graph $G-D$ has $n-\vert D\vert\geq k-1$ vertices and $e(G)-\e_G(D)$ edges. By Lemma \ref{lemma-edges-good-set} these are at least
\begin{multline*}
e(G)-\e_G(D)\geq ((k-1)n-t)-((k-1)\vert D\vert+1)=(k-1)(n-\vert D\vert)-(t+1)\\
\geq (k-1)(n-\vert D\vert)-\frac{(k-2)(k+1)}{2}=(k-1)((n-\vert D\vert)-k+2)+{k-2\choose 2}
\end{multline*}
edges, so by Fact \ref{fact1} the graph $G-D$ contains a subgraph of minimum degree at least $k$.
\end{proof}

\begin{claim}[see the paragraph of \cite{zurich} below the proof of Claim 2.2]\label{no-big-good-set} If $D$ is a good set in $G$, then $\vert D\vert\leq \frac{n}{k}$.\end{claim}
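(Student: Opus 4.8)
The plan is to prove the stronger bound $|D|<\eps n$, exploiting Claim~\ref{remove-good-set-1} together with the standing assumption that $G$ has no subgraph on at most $(1-\eps)n$ vertices with minimum degree at least $k$. The first step is to record that \emph{every good set $D$ with $|D|\leq n-k+1$ already satisfies $|D|<\eps n$}: by Claim~\ref{remove-good-set-1} the graph $G-D$ contains a subgraph with minimum degree at least $k$, which is then a subgraph of $G$ on at most $n-|D|$ vertices, so the standing assumption forces $n-|D|>(1-\eps)n$, i.e.\ $|D|<\eps n$. As $\max(10^4k^2,100kt)>k$ we have $\eps<\tfrac1k$, so this already gives $|D|<\tfrac nk$ for every such $D$.

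The second step is to show that in fact every good set $D$ has $|D|\leq n-k+1$; combined with the first step this finishes the proof. I would prove $|D|\leq n-k+1$ by induction on the construction of $D$ in Definition~\ref{def-good-set}, just as in the proof of Lemma~\ref{lemma-edges-good-set}. A singleton $\{v\}$ has size $1\leq n-k+1$ since $n\geq k$. For each of the other three rules, the one or two good sets being combined have size at most $n-k+1$ by the induction hypothesis, hence size less than $\eps n$ by the first step; since $\eps\leq\frac{1}{10^4k^2}$ and $n\geq k+1$, a short computation (reducing in each case to the inequality $\eps\leq\tfrac1{k+1}$) shows that adding a single vertex, or taking a union of two such sets, still produces a set of size at most $n-k+1$. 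This closes the induction and yields $|D|<\eps n<\tfrac nk$ for every good set $D$, which is even stronger than the claim.

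I expect no real conceptual obstacle here, and the remaining work is just the routine inequality-checking in the induction step. The one point worth emphasizing is that the first step is genuinely needed to close the induction in the second step: the bound $|D|\leq n-k+1$ on its own does not survive even a single application of the vertex-addition rule (it would only give $|D|\leq n-k+2$), so the extra slack $|D|<\eps n$ coming from Claim~\ref{remove-good-set-1} is essential.
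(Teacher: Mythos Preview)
Your proof is correct, and in fact proves the stronger bound $|D|<\eps n$. The paper takes a different route: it argues by minimal counterexample, choosing a good set $D$ of minimal size subject to $|D|>n/k$, observing that the construction rules in Definition~\ref{def-good-set} guarantee a good set $D'$ with $\tfrac{n}{2k}\leq |D'|\leq \tfrac{n}{k}\leq n-k+1$, and then applying Claim~\ref{remove-good-set-1} to $D'$ to produce a subgraph of minimum degree $\geq k$ on at most $n-\tfrac{n}{2k}<(1-\eps)n$ vertices, a contradiction.

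Both arguments hinge on the same mechanism (Claim~\ref{remove-good-set-1} plus the standing assumption on $G$), but they are organized differently. The paper's descent argument only needs to locate a single good set of size in the window $[\tfrac{n}{2k},\,n-k+1]$, which falls out immediately from the halving/decrementing structure of the rules; your approach instead runs a forward structural induction, using the bootstrap $|D|<\eps n$ at each step to keep the size below $n-k+1$. Your version has the advantage of yielding $|D|<\eps n$ directly (which the paper never states explicitly, though it is implicit), at the cost of the small inequality-checking you mention; the paper's version avoids that bookkeeping but gives only the weaker stated bound $|D|\leq n/k$.
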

\begin{proof} Suppose there is a good set $D$ of size $\vert D\vert> \frac{n}{k}>1$ (recall $n\geq k+1$). Then let us choose such a set with $\vert D\vert$ minimal (under the constraint $\vert D\vert> \frac{n}{k}$). As $D$ is constructed according to the rules in Definition \ref{def-good-set}, there must be a good set $D'$ with size $\vert D'\vert=\vert D\vert-1$ or $\vert D\vert/2\leq \vert D'\vert<\vert D\vert$. As $\vert D\vert\geq 2$ we have in either case $\vert D'\vert\geq \vert D\vert/2\geq \frac{n}{2k}$, but also $\vert D'\vert<\vert D\vert$ and therefore $\vert D'\vert\leq \frac{n}{k}$ by the choice of $D$. As $n\geq k+1$, this implies $\vert D'\vert\leq \frac{n}{k}\leq n-k+1$ and so by Claim \ref{remove-good-set-1} the graph $G-D'$ contains a subgraph with minimum degree at least $k$. But this subgraph has at most $n-\vert D'\vert\leq n-\frac{n}{2k}<(1-\eps)n$ vertices, which is a contradiction to our assumption on $G$.
\end{proof}

\begin{coro}\label{coro-edges-good-set}For every good set $D$ in $G$ we have $\e_G(D)\leq (k-1)\vert D\vert+1$.
\end{coro}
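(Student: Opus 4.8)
The corollary removes the size restriction $\vert D\vert\leq n-k+1$ appearing in Lemma \ref{lemma-edges-good-set}, so the natural plan is to argue that this restriction is automatically satisfied by every good set and then simply invoke that lemma. The tool for this is Claim \ref{no-big-good-set}, which bounds every good set by $\vert D\vert\leq \frac{n}{k}$.

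First I would recall that $n\geq k+1$, as established right after Claim \ref{claim-sets-many-edges}. Then I would check the elementary inequality $\frac{n}{k}\leq n-k+1$: rearranging, this is equivalent to $0\leq (k-1)n-k(k-1)=(k-1)(n-k)$, which holds since $k\geq 2$ and $n\geq k+1>k$. (This is the same observation already used inside the proof of Claim \ref{no-big-good-set}.)

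Combining these, for any good set $D$ in $G$ we get $\vert D\vert\leq \frac{n}{k}\leq n-k+1$ by Claim \ref{no-big-good-set}, so Lemma \ref{lemma-edges-good-set} applies and yields $\e_G(D)\leq (k-1)\vert D\vert+1$, as desired.

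\textbf{Main obstacle.} There is essentially none: the content has already been done in Claim \ref{no-big-good-set} and Lemma \ref{lemma-edges-good-set}, and the corollary is just the clean combined statement one wants to cite later (e.g.\ without having to re-examine whether a given good set is small enough). The only thing to be careful about is not to make the argument circular — Claim \ref{no-big-good-set} was itself proved using Claim \ref{remove-good-set-1}, which in turn used Lemma \ref{lemma-edges-good-set} only for good sets of size at most $n-k+1$, so there is no hidden dependency on the very statement we are now proving.
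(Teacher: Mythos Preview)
Your proof is correct and matches the paper's own argument exactly: the paper simply notes that $\frac{n}{k}\leq n-k+1$ and then invokes Claim \ref{no-big-good-set} and Lemma \ref{lemma-edges-good-set}. Your additional remark about non-circularity is a nice sanity check but not needed.
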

\begin{proof}As $\frac{n}{k}\leq n-k+1$, this follows directly from Claim \ref{no-big-good-set} and Lemma \ref{lemma-edges-good-set}.
\end{proof}

Now denote by $\C^{*}$ the collection of all maximal good subsets of $G$. By Corollary \ref{coro-edges-good-set} every $D\in \C^{*}$ is a subset of $V(G)$ satisfying $\e_G(D)\leq (k-1)\vert D\vert+1$. Furthermore, by the construction rules in Definition \ref{def-good-set} all the members of $\C^{*}$ are disjoint and there are no edges between them.

\begin{claim}\label{remove-good-set} If $D\in \C^{*}$, then $G-D$ is a non-empty graph with minimum degree at least $k$.\end{claim}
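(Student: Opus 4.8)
The plan is to deduce both assertions essentially for free: non-emptiness from the size bound on good sets, and the minimum-degree property from the maximality of $D$ together with the second construction rule in Definition \ref{def-good-set}.

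First I would verify that $G-D$ is non-empty. Since $D$ is a good set, Claim \ref{no-big-good-set} gives $\vert D\vert\leq \frac{n}{k}\leq \frac{n}{2}<n$, where we used $k\geq 2$ and $n\geq k+1$. Hence $V(G)\mi D\neq\es$, so $G-D$ is a non-empty graph. Next I would establish the minimum-degree bound by contradiction: suppose some vertex $v\in V(G)\mi D$ satisfies $\deg_{G-D}(v)\leq k-1$. Applying the second construction rule in Definition \ref{def-good-set} to the good set $D$ and the vertex $v$, we get that $D\cup\lbrace v\rbrace$ is a good set. But $D\subsetneq D\cup\lbrace v\rbrace$, which contradicts the assumption that $D$ is a maximal good set, i.e.\ $D\in\C^{*}$. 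Therefore every vertex of $G-D$ has degree at least $k$ in $G-D$, and combined with non-emptiness this proves the claim.

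I do not anticipate a genuine obstacle here: the second rule in the definition of good sets is designed precisely so that a maximal good set cannot absorb a vertex of the complement whose degree into $G-D$ is at most $k-1$, and the size bound needed for non-emptiness is exactly Claim \ref{no-big-good-set} (whose proof already invokes the inductive hypothesis via Claim \ref{remove-good-set-1}). The only minor point to keep in mind is the paper's convention that "minimum degree at least $k$" includes non-emptiness, so both halves of the statement must be checked.
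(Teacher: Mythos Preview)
Your proposal is correct and matches the paper's proof essentially line for line: both use Claim \ref{no-big-good-set} to conclude $\vert D\vert<n$ for non-emptiness, and both invoke the second rule of Definition \ref{def-good-set} together with the maximality of $D\in\C^{*}$ to force $\deg_{G-D}(v)\geq k$ for every $v\in V(G)\mi D$. The only cosmetic difference is that you spell out the contradiction explicitly and insert the intermediate bound $\frac{n}{k}\leq\frac{n}{2}$, whereas the paper simply writes $\frac{n}{k}<n$.
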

\begin{proof} By Claim \ref{no-big-good-set} we have $\vert D\vert\leq \frac{n}{k}<n$, hence $G-D$ is non-empty. Since $D$ is a maximal good set, we have $\deg_{G-D}(v)\geq k$ for each $v\in V(G)\mi D$ by the second rule in Definition \ref{def-good-set}.
\end{proof}

By the first rule in Definition \ref{def-good-set} every vertex of $G$ with degree $k$ is contained in some member of $\C^{*}$. We know by Lemma \ref{deg-k-vertices} that $G$ has at least $\frac{n}{3k}$ vertices of degree $k$. Hence
$$\sum_{D\in \C^{*}}\vert D\vert\geq \frac{n}{3k}.$$

For any subset $\mathcal{X}$ of $\C^{*}$, let $\Vert \mathcal{X}\Vert=\sum_{D\in \mathcal{X}}\vert D\vert$ be the sum of the sizes of the elements of $\mathcal{X}$. Then the last inequality reads $\Vert\C^{*}\Vert=\sum_{D\in \C^{*}}\vert D\vert\geq \frac{n}{3k}$.

Let $m=\vert \C^{*}\vert$ and $\C^{*}=\lbrace D_1,D_2,\dots,D_m\rbrace$ with $\vert D_1 \vert\geq \vert D_2 \vert\geq \dots \geq \vert D_m \vert>0$.

Now let $J$ be the largest positive integer with $2^{J}-1\leq m$. For each $j=1,\dots,J$ set
$$\C_j=\lbrace D_i \,\vert\, 2^{j-1}\leq i<2^{j}\rbrace$$
and set
$$\C=\lbrace D_i \,\vert\, 1\leq i\leq 2^{J}-1\rbrace.$$
Then $\C_1$,\dots, $\C_J$ are disjoint subcollections of $\C^{*}$ and their union is $\C$. For each $j=1,\dots,J$ we have $\vert \C_j\vert=2^{j-1}$.

Note that $\vert \C\vert=2^{J}-1$ and therefore $\vert \C^{*}\mi \C\vert=m-(2^{J}-1)<2^{J}$ by the choice of $J$. Thus, $\vert \C^{*}\mi \C\vert\leq \vert \C\vert$. Since every element of $\C$ has at least the size of every element of $\C^{*}\mi \C$, this implies $\Vert \C^{*}\mi \C\Vert\leq \Vert \C\Vert$ and hence $\Vert \C\Vert\geq \frac{1}{2} \Vert\C^{*}\Vert\geq \frac{n}{6k}$.

Furthermore for each $j=1,\dots,J-1$ we have $\vert \C_{j+1}\vert=2^{j}=2\vert \C_{j}\vert$ and since every element of $\C_j$ has at least the size of every element of $\C_{j+1}$, this implies $\Vert \C_{j+1}\Vert\leq 2\Vert \C_j\Vert$.

Let $J'\leq J$ be the least positive integer such that
\begin{equation}\label{eqJstrich1}
\Vert \C_1\Vert+\dots+\Vert \C_{J'}\Vert\geq \frac{n}{100k}
\end{equation}
(note that $J$ has this property since $\Vert \C_1\Vert+\dots+\Vert \C_J\Vert=\Vert \C\Vert\geq \frac{n}{6k}$, hence there is a least positive integer $J'\leq J$ with the property). Note that $J'>1$, since otherwise we would have $\vert D_1\vert=\Vert \C_1\Vert\geq \frac{n}{100k}$ and since $G-D_1$ has minimum degree at least $k$ by Claim \ref{remove-good-set}, this would mean that $G$ has a subgraph with at most $(1-\frac{1}{100k})n$ vertices and minimum degree at least $k$, which contradicts our assumptions on $G$. Thus, indeed $J'>1$.

Note that $\Vert \C_1\Vert+\dots+\Vert \C_{J'-1}\Vert< \frac{n}{100k}$ by the choice of $J'$ and in particular $\Vert \C_{J'-1}\Vert< \frac{n}{100k}$. Hence
\begin{equation}\label{eqJstrich3}
\Vert \C_{J'}\Vert\leq 2\Vert \C_{J'-1}\Vert< \frac{2n}{100k}
\end{equation}
and therefore
$$\Vert \C_1\Vert+\dots+\Vert \C_{J'}\Vert<\frac{n}{100k}+\frac{2n}{100k}=\frac{3n}{100k}.$$
Thus,
\begin{equation}\label{eqJstrich2}
\Vert \C_{J'+1}\Vert+\dots+\Vert \C_{J}\Vert=\Vert \C\Vert-(\Vert \C_1\Vert+\dots+\Vert \C_{J'}\Vert)\geq \frac{n}{6k}-\frac{3n}{100k}>\frac{n}{8k}.
\end{equation}

\begin{claim}\label{Jstrichgross} $2^{J'}>t$.
\end{claim}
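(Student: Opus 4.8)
The plan is to argue by contradiction: assume $2^{J'} \le t$, and show that this forces $G$ to contain a small subgraph of minimum degree at least $k$, contradicting our standing assumption on $G$. The natural way to produce such a subgraph is to build a good set from some of the members $D_1, \dots, D_{2^{J'}-1}$ of $\C_1 \cup \dots \cup \C_{J'}$, since good sets of size at most $\frac{n}{k}$ can be removed from $G$ while leaving a subgraph of minimum degree at least $k$ by Claim~\ref{remove-good-set-1} (via Claim~\ref{no-big-good-set}). A union of good sets need not be a good set unless the members touch or overlap, but here we have a budget: by \eqref{eqJstrich1} the sets in $\C_1 \cup \dots \cup \C_{J'}$ together contain at least $\frac{n}{100k}$ vertices, and there are only $2^{J'}-1 \le t-1$ of them. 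Each one individually is small (at most $\frac{n}{k}$, but in fact we will want the finer bound), so no single one can already give a contradiction; their sizes must in fact be comparable.

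First I would recall that every $D \in \C^*$ satisfies $\vert D\vert \le \frac{n}{100k}$, for otherwise $G - D$ has minimum degree at least $k$ by Claim~\ref{remove-good-set} and at most $(1-\frac{1}{100k})n < (1-\eps)n$ vertices, contradicting our assumptions on $G$. In particular each of $D_1, \dots, D_{2^{J'}-1}$ has size at most $\frac{n}{100k}$. Now among the $2^{J'}-1 \le t-1$ sets $D_1, \dots, D_{2^{J'}-1}$, since $t \le \frac{(k-2)(k+1)}{2}-1 < \frac{k^2}{2}$ we crudely have $2^{J'} \le t < \frac{k^2}{2}$, so the total $\Vert \C_1\Vert + \dots + \Vert\C_{J'}\Vert \ge \frac{n}{100k}$ is distributed among fewer than $\frac{k^2}{2}$ sets; hence the largest one, $D_1$, has size $\vert D_1\vert \ge \frac{n}{100k}/(2^{J'}-1) \ge \frac{2n}{100k^3}$. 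But again $G - D_1$ has minimum degree at least $k$ by Claim~\ref{remove-good-set}, and it has at most $n - \vert D_1\vert \le (1 - \frac{2}{100k^3})n$ vertices. Since $\eps = \frac{1}{\max(10^4 k^2, 100kt)} \le \frac{1}{10^4 k^2} < \frac{2}{100k^3}$ for all $k \ge 2$ — wait, this needs $\frac{1}{10^4 k^2} < \frac{2}{100 k^3}$, i.e. $100 k^3 < 2 \cdot 10^4 k^2$, i.e. $k < 200$, which is not always true — so this crude bound is not quite enough, and I expect the real argument to be more delicate.

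The main obstacle, then, is that simply bounding the size of the largest $D_i$ from below using the count $2^{J'}-1$ and the mass $\frac{n}{100k}$ does not immediately beat $\eps n$ when $k$ is large. The fix I would pursue is to use the dyadic structure more carefully: the sets in $\C_{J'}$ alone, of which there are $2^{J'-1}$, carry mass $\Vert\C_{J'}\Vert$, and from \eqref{eqJstrich1} applied at level $J'$ versus $J'-1$ we know $\Vert\C_1\Vert + \dots + \Vert\C_{J'}\Vert \ge \frac{n}{100k}$ while $\Vert\C_1\Vert + \dots + \Vert\C_{J'-1}\Vert < \frac{n}{100k}$, so $\C_{J'}$ is nonempty and, more usefully, the earlier levels $\C_1, \dots, \C_{J'-1}$ already carry almost $\frac{n}{100k}$ mass among $2^{J'-1}-1$ sets whose sizes dominate those in $\C_{J'}$; alternatively, since $\vert D_1\vert \ge \vert D_2\vert \ge \dots$, the prefix $D_1, \dots, D_{2^{J'}-1}$ has average size at least $\frac{n}{100k}/(2^{J'}-1)$, and I would try to extract a single $D_i$ of size $\ge \frac{n}{100kt}$. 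Then $G - D_i$ has at most $(1 - \frac{1}{100kt})n$ vertices, and since $\eps \le \frac{1}{100kt}$ by definition, this is at most $(1-\eps)n$ — a contradiction, giving $2^{J'} > t$. The essential point to get right is the bookkeeping that converts "$2^{J'}-1$ sets of total size $\ge \frac{n}{100k}$, each at most $\frac{n}{100k}$" into "some set of size $> \eps n$", and making sure the constant $100k$ in \eqref{eqJstrich1} was chosen precisely so that dividing by $t$ (an upper bound for $2^{J'}-1$ under the contradiction hypothesis) lands at or below $\eps$; this is exactly why $\eps$ has the factor $100kt$ in its denominator.
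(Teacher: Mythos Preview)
Your final argument is correct and is exactly the paper's proof: under the hypothesis $2^{J'}\le t$ the $2^{J'}-1\le t-1$ sets $D_1,\dots,D_{2^{J'}-1}$ have total size at least $\frac{n}{100k}$ by \eqref{eqJstrich1}, so by pigeonhole some $D_i$ satisfies $\vert D_i\vert\ge \frac{n}{100kt}$, and then $G-D_i$ (which has minimum degree at least $k$ by Claim~\ref{remove-good-set}) has at most $(1-\frac{1}{100kt})n\le(1-\eps)n$ vertices, a contradiction. The detours in your first two paragraphs---building unions of good sets, and the crude $t<k^2/2$ bound---are unnecessary; the pigeonhole-against-$t$ step you reach at the end is the whole proof.
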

\begin{proof} By (\ref{eqJstrich1}) we have
$$\sum_{D\in \C_1\cup\dots\cup\C_{J'}}\vert D\vert=\Vert \C_1\Vert+\dots+\Vert \C_{J'}\Vert\geq \frac{n}{100k}.$$
Note that the set $\C_1\cup\dots\cup\C_{J'}=\lbrace D_i \,\vert\, 1\leq i<2^{J'}\rbrace$ has $2^{J'}-1$ elements. If we had $2^{J'}\leq t$, then the above sum would have at most $t-1$ summands and in particular there would be some $D\in \C_1\cup\dots\cup\C_{J'}\su \C^{*}$ with $\vert D\vert\geq \frac{n}{100kt}$. But then by Claim \ref{remove-good-set} the graph $G-D$ would be a subgraph of $G$ with minimum degree at least $k$ and at most $(1-\frac{1}{100kt})n\leq (1-\eps)n$ vertices. This is a contradiction to our assumption on $G$.
\end{proof}

Let us now fix $401k$ colours and enumerate them colour 1 to colour $401k$. As indicated in the introduction, we prove Theorem \ref{thm} by constructing a colouring of some of the vertices of $G$, such that when removing the vertices of any colour class what remains is a graph of minimum degree at least $k$. If we colour sufficiently many vertices of $G$, then one of the colour classes has size at least $\eps n$ and we obtain a subgraph on at most $(1-\eps) n$ vertices with minimum degree at least $k$.

Throughout the paper, when we talk about colourings of the vertices of a graph we do not mean proper colourings (i.e.\ we allow adjacent vertices to have the same colour) and we also allow some vertices to remain uncoloured.

\begin{defi}\label{approcolour} For $\l=J',\dots,J$, an \emph{$\l$-appropriate} colouring of $G$ is a colouring of some of the vertices of $G$ by the $401k$ given colours, such that the following seven conditions are satisfied. Here, for each $i=1,\dots,401k$, we let $X_i\su V(G)$ denote the set of vertices coloured in colour $i$.
\begin{itemize}
\item[(i)] Each vertex has at most one colour.
\item[(ii)] For each $D\in \C$, the set $D$ is either monochromatic or completely uncoloured.
\item[(iii)] For $i=1,\dots,401k$, let $y_i^{(\l)}$ be the number of members of $\C_1\cup\dots\cup\C_\l$ that are monochromatic in colour $i$. Then for each $i=1,\dots,401k$ we have $\e_G(X_i)\leq (k-1)\vert X_i\vert+y_i^{(\l)}$.
\item[(iv)] For each $i=1,\dots,401k$, the graph $G-X_i$ has minimum degree at least $k$. In other words: When removing all the vertices with colour $i$, we obtain a graph of minimum degree at least $k$.
\item[(v)] The members of $\C_1\cup\dots\cup\C_{J'}$ are all uncoloured.
\item[(vi)] For every $J'+1\leq j\leq \l$, the number of uncoloured members of $\C_j$ is at most $\frac{1}{4}\vert \C_j\vert$.
\item[(vii)] If $D\in \C_{\l+1}\cup\dots\cup\C_{J}$ is uncoloured, then all of its neighbours $v\in V(G)$ are also uncoloured.
\end{itemize}
\end{defi}

\begin{lem}\label{colour} For every $\l=J',\dots,J$ there exists an $\l$-appropriate colouring of $G$.
\end{lem}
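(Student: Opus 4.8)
The plan is to prove, by induction on $\l$, the slightly stronger statement that for every $\l=J',\dots,J$ there is an $\l$-appropriate colouring of $G$ in which, moreover, every coloured vertex lies in a member of $\C_{J'+1}\cup\dots\cup\C_\l$. For the base case $\l=J'$ I claim the colouring leaving \emph{every} vertex of $G$ uncoloured works: taking every $X_i=\es$, conditions (i), (ii), (v), (vii) and the extra property are immediate, condition (iii) holds since $\e_G(\es)=0$ and each $y_i^{(J')}=0$, condition (iv) holds because $G-X_i=G$ has minimum degree at least $k$ (as established above), and condition (vi) is vacuous since the index range $J'+1\le j\le J'$ is empty. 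For the inductive step, suppose we are given such a colouring, with classes $X_1,\dots,X_{401k}$, for some $J'\le\l\le J-1$; then every member of $\C_{\l+1}\cup\dots\cup\C_J$, as well as every vertex outside $\bigcup_{D\in\C^*}D$, is currently uncoloured. The aim is to extend the colouring by assigning colours to at least $\tfrac{3}{4}|\C_{\l+1}|$ of the sets in $\C_{\l+1}$, and to no other vertices; this supplies condition (vi) for $j=\l+1$ and preserves the extra property, so it remains only to check the other six conditions.

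Most of these are preserved automatically. Condition (v) is untouched; making the chosen members of $\C_{\l+1}$ monochromatic keeps (ii); and since members of $\C^*$ are pairwise non-adjacent, no newly coloured vertex is a neighbour of an uncoloured member of $\C_{\l+2}\cup\dots\cup\C_J$, so (vii) persists with $\l$ replaced by $\l+1$. Condition (iii) also comes for free: if $D\in\C_{\l+1}$ is given colour $i$, then by Corollary \ref{coro-edges-good-set} and the absence of edges between members of $\C^*$, $\e_G(X_i\cup D)\le\e_G(X_i)+\e_G(D)\le(k-1)|X_i|+y_i^{(\l)}+(k-1)|D|+1$, and since $y_i^{(\l+1)}$ exceeds $y_i^{(\l)}$ by exactly the number of sets of $\C_{\l+1}$ given colour $i$, summing over those sets yields (iii) with parameter $\l+1$. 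Hence the entire difficulty is to choose which members of $\C_{\l+1}$ to colour, and with which of the $401k$ colours, so that condition (iv) survives.

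For (iv), only a vertex $v$ adjacent to a newly coloured set can be affected, and any such $v$ lies outside $\bigcup_{D\in\C^*}D$ (a vertex adjacent to a member of $\C^*$ but not contained in it cannot, by the non-adjacency of the members of $\C^*$, be contained in any member of $\C^*$, and hence, by the first rule of Definition \ref{def-good-set}, has degree at least $k+1$); in particular such a $v$ is currently uncoloured, so $\deg_{G-X_i}(v)\ge k$. Writing $\mathcal{F}_i\su\C_{\l+1}$ for the family of sets assigned colour $i$, condition (iv) for colour $i$ reduces to the requirement that $\sum_{D\in\mathcal{F}_i}|N(v)\cap D|\le\deg_G(v)-|N(v)\cap X_i|-k$ for every vertex $v$ (which is automatic unless $v$ is adjacent to some member of $\mathcal{F}_i$). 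For a single maximal good set $D$ one has $|N(v)\cap D|\le\deg_G(v)-k$ by Claim \ref{remove-good-set}, so these inequalities are trivial when $|\mathcal{F}_i|\le1$; the difficulty is purely the accumulation when several sets share a colour, and it is what forces us both to balance $\C_{\l+1}$ across the $401k$ colours and to tolerate leaving up to a quarter of $\C_{\l+1}$ uncoloured. Resolving this distribution problem is exactly what I expect Lemma \ref{mainlem} --- the announced extension of Lemma~2.7 of Mousset, Noever and \v{S}kori\'{c} --- to do: applied to the relevant configuration (the graph $G$, the collection $\C_{\l+1}$ of maximal good sets, and the current colour classes $X_1,\dots,X_{401k}$), it should produce a colouring of at least $\tfrac{3}{4}|\C_{\l+1}|$ of these sets for which the displayed inequality holds at every vertex and every colour. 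Merging its output with the given colouring yields an $(\l+1)$-appropriate colouring (still with the extra property), closing the induction.

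I expect the main obstacle to be precisely this distribution step --- simultaneously colouring a three-quarters fraction of $\C_{\l+1}$ while keeping, for every colour $i$ and every vertex $v$, the overlap $\sum_{D\in\mathcal{F}_i}|N(v)\cap D|$ within the available slack $\deg_G(v)-|N(v)\cap X_i|-k$. Everything else in the inductive step is bookkeeping; this balanced allocation of the good sets of $\C_{\l+1}$ among the many colours is the combinatorial core, and it is what Lemma \ref{mainlem} is designed to handle.
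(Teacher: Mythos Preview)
There is a genuine gap. Your plan to strengthen the induction hypothesis so that every coloured vertex lies in a member of $\C_{J'+1}\cup\dots\cup\C_\l$, and in the inductive step to colour \emph{only} members of $\C_{\l+1}$, does not match what Lemma~\ref{mainlem} actually provides, and your invocation of that lemma is incorrect. Lemma~\ref{mainlem} cannot be applied to ``the graph $G$'' as you propose: its hypothesis requires the input graph to have \emph{no} subgraph of minimum degree at least $k$, whereas $G$ itself has minimum degree at least $k$. In the paper the lemma is applied instead to $H=G-(\C_1\cup\dots\cup\C_{\l+1})-(X_1\cup\dots\cup X_{401k})$, which has at most $(1-\eps)n$ vertices and therefore (by the standing contradiction assumption on $G$) no such subgraph. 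Moreover the lemma neither takes colour classes as input nor outputs a colouring; it outputs a set $S\su V_{\leq k-1}(H)$, disjoint sets $B_v$ for $v\in V_{k-1}(H)\setminus S$, and, for each admissible supergraph $\tH$ of $H$, an induced subgraph $\tH'$ of minimum degree at least $k$.

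Accordingly, the paper does \emph{not} maintain your stronger invariant. After applying Lemma~\ref{mainlem} to $H$, the set $S$ (together with the bound $\sum_{s\in S}(k-\deg_H(s))\le 24\vert\C_{\l+1}'\vert$) is used to set up a greedy colouring $\psi$ of the non-popular members of $\C_{\l+1}'$; but $\psi$ alone does not secure condition~(iv). For each colour $i$ one then invokes the conclusion of Lemma~\ref{mainlem} with $\tH=G-X_i-\Z_i$ to obtain $\tH'$, and colours the additional set $X_i'=V(\tH)\setminus V(\tH')\su V(H)$ with colour $i$. These extra vertices $X_i'$, which need not lie in any member of $\C$, are precisely what absorbs the cascade you correctly identify: removing $X_i\cup\Z_i$ may leave vertices of degree below $k$, and $X_i'$ is the further set whose removal restores minimum degree at least $k$. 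Once the $X_i'$ enter, conditions (i), (ii), (iii) and (vii) are no longer the automatic bookkeeping you describe: disjointness of the $X_i'$ uses property~(c) and the disjointness of the $B_v$; condition~(ii) uses property~(e); condition~(iii) uses property~(b); and condition~(vii) uses property~(f). Your reduction of (iv) to the displayed overlap inequality is correct as far as it goes, but solving that system of inequalities by colouring only inside $\C_{\l+1}$ is not what Lemma~\ref{mainlem} does, and you have not supplied an alternative argument that it can be done.
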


We prove Lemma \ref{colour} by induction in Section \ref{sect3}.

We remark that in order to prove Theorem \ref{thm} we only need Lemma \ref{colour} for $\l=J$ and we also only need conditions (ii), (iv) and (vi) in Definition \ref{approcolour}. However, all the other conditions are needed in order to keep the induction in the proof of Lemma \ref{colour} running.

Let us now finish the proof of Theorem \ref{thm}. Consider a $J$-appropriate colouring of $G$, which exists by Lemma \ref{colour}.

For every $j=J'+1,\dots,J$ by condition (vi) in Definition \ref{approcolour} the number of uncoloured members of $\C_j$ is at most $\frac{1}{4}\vert \C_j\vert=\frac{1}{4}2^{j-1}=\frac{1}{2}2^{j-2}=\frac{1}{2}\vert \C_{j-1}\vert$. Since the size of every member of $\C_{j-1}$ is at least the size of every member of $\C_j$, this implies
$$\sum_{D\in \C_j\text{ uncoloured}}\vert D\vert\leq \frac{1}{2}\Vert \C_{j-1}\Vert.$$
Thus, for every $j=J'+1,\dots,J$,
$$\sum_{D\in \C_j\text{ coloured}}\vert D\vert=\Vert \C_j\Vert-\sum_{D\in \C_j\text{ uncoloured}}\vert D\vert\geq \Vert \C_j\Vert-\frac{1}{2}\Vert \C_{j-1}\Vert.$$
So the total number of coloured vertices is at least
$$\sum_{D\in \C_{J'+1}\cup\dots\cup \C_J\text{ coloured}}\vert D\vert\geq \sum_{j=J'+1}^{J} \left(\Vert \C_j\Vert-\frac{1}{2}\Vert \C_{j-1}\Vert\right)=
\sum_{j=J'+1}^{J} \Vert \C_j\Vert-\frac{1}{2}\sum_{j=J'+1}^{J-1} \Vert \C_j\Vert-\frac{1}{2}\Vert \C_{J'}\Vert,$$
and by (\ref{eqJstrich3}) and (\ref{eqJstrich2})  this is at least
$$\frac{1}{2}\sum_{j=J'+1}^{J} \Vert \C_j\Vert-\frac{1}{2}\cdot \frac{2n}{100}>\frac{n}{16k}-\frac{n}{100k}>\frac{n}{20k}.$$
Since there are at least $\frac{n}{20k}$ coloured vertices, one of the $401k$ colours must occur at least $\frac{n}{10^{4}k^{2}}$ times. If we delete all vertices of this colour, then by condition (iv) in Definition \ref{approcolour} the remaining graph has minimum degree at least $k$ and at most
$$\left(1-\frac{1}{10^{4}k^{2}}\right)n\leq (1-\eps)n$$
vertices. This contradicts our assumption on $G$ and finishes the proof of Theorem \ref{thm}.

\section{Proof of Lemma \ref{colour}} \label{sect3}

The goal of this section is to prove Lemma \ref{colour}. Our proof proceeds by induction on $\l$. In each step we apply the following key lemma, which is an extension of \cite[Lemma 2.7]{zurich}.

\begin{lem}\label{mainlem} Let $H$ be a graph and $\C_H$ be a collection of disjoint non-empty subsets of $V(H)$, such that for each $D\in\C_H$ we have $\e_H(D)\leq (k-1)\vert D\vert+1$ and $\deg_H(v)\geq k$ for each $v\in D$. Assume that $H$ does not have a subgraph of minimum degree at least $k$.

Then we can find a subset $S\su V_{\leq k-1}(H)$ with $ V_{\leq k-2}(H)\su S$ and
$$\sum_{s\in S}(k-\deg_H(s))\leq 2((k-1)v(H)-e(H))$$
as well as disjoint subsets $B_v\su V(H)$ for each vertex $v\in V_{\leq k-1}(H)\mi S=V_{k-1}(H)\mi S$, such that the following holds:

If $\tH$ is a graph containing $H$ as a proper induced subgraph such that $V_{\leq k-1}(\tH)\su V_{\leq k-1}(H)\mi S$ and no vertex in $V(\tH)\mi V(H)$ is adjacent to any member of $\C_H$, then there exists a (non-empty) induced subgraph $\tH'$ of $\tH$ with the following six properties:
\begin{itemize}
\item[(a)] The minimum degree of $\tH'$ is at least $k$.
\item[(b)] $(k-1)v(\tH')-e(\tH')\leq (k-1)v(\tH)-e(\tH)$.
\item[(c)] $V(\tH)\mi V(\tH')\su \bigcup_{v\in V_{\leq k-1}(\tH)}B_v$, so in particular $V(\tH)\mi V(\tH')\su V(H)$.
\item[(d)] No vertex in $V(\tH)\mi V(\tH')$ is adjacent to any vertex in $V(\tH)\mi V(H)$.
\item[(e)] For each $D\in \C_H$ either $D\su V(\tH')$ or $D\cap V(\tH')=\es$.
\item[(f)] If $D\in \C_H$ and $D\su V(\tH')$, then $D$ is not adjacent to any vertex in $V(\tH)\mi V(\tH')$.
\end{itemize}
\end{lem}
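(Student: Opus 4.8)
The plan is to adapt the reduction argument behind \cite[Lemma 2.7]{zurich}, adding bookkeeping for the collection $\C_H$. Throughout I track the potential $\Phi(F)=(k-1)v(F)-e(F)$: deleting a vertex of degree $d\le k-1$ changes $\Phi$ by $d-(k-1)\le 0$, and deleting a whole set $D\in\C_H$ at a moment when $D$ already has a deleted neighbour removes at most $\e_H(D)-1\le (k-1)\vert D\vert$ edges (using $\e_H(D)\le (k-1)\vert D\vert+1$; the hypothesis that $D$ has no neighbour in $V(\tH)\mi V(H)$ ensures the number of edges incident to $D$ is the same in $\tH$ as in $H$), so this too does not increase $\Phi$. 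Since $H$ has no subgraph of minimum degree $\ge k$, repeatedly deleting vertices of degree $\le k-1$ (and swallowing touched members of $\C_H$) eventually removes every vertex. To construct $S$, run the following reduction on $H$: while there is a vertex $v$ with $\deg_H(v)\le k-1$ whose current degree is at most $k-2$, delete $v$; let $S$ be the set of all vertices deleted. Then $V_{\le k-2}(H)\su S\su V_{\le k-1}(H)$, and for each deleted $v$ one has $k-\deg_H(v)\le k-\deg_{\mathrm{cur}}(v)\le 2\big((k-1)-\deg_{\mathrm{cur}}(v)\big)$ because $\deg_{\mathrm{cur}}(v)\le k-2$, while $(k-1)-\deg_{\mathrm{cur}}(v)$ is exactly the decrease of $\Phi$ caused by deleting $v$; summing over $S$ and using that $\Phi$ stays nonnegative throughout (by Fact \ref{fact1}, since every intermediate graph is a nonempty subgraph of $H$ and hence has no subgraph of minimum degree $\ge k$) yields $\sum_{s\in S}(k-\deg_H(s))\le 2\Phi(H)=2\big((k-1)v(H)-e(H)\big)$.

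Next I would build the blocks. In $H-S$ the surviving vertices of degree exactly $k-1$ are precisely the members of $V_{k-1}(H)\mi S$; enumerate them as $w_1,\dots,w_r$ and define $B_{w_j}$ sequentially: working inside $(H-S)-(B_{w_1}\cup\dots\cup B_{w_{j-1}})$, delete $w_j$ (if it is still present, otherwise set $B_{w_j}=\es$), then repeatedly delete any vertex of current degree $\le k-1$ \emph{other than} one of the remaining roots $w_i$, swallowing any $D\in\C_H$ in its entirety the moment a deleted vertex is adjacent to it and continuing the cascade from the neighbours of $D$; let $B_{w_j}$ collect all vertices removed in this process. By construction the $B_{w_j}$ are pairwise disjoint, each is contained in $V(H)\mi S$, their union is all of $V(H)\mi S$, and each $D\in\C_H$ lies entirely inside one block or is disjoint from all of them. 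Given an admissible $\tH$, I would then let $\tH'$ be obtained by running the analogous reduction inside $\tH$: mark all vertices of $V_{\le k-1}(\tH)$ (which lie in $V_{k-1}(H)\mi S$ by hypothesis), repeatedly delete vertices of current degree $\le k-1$, and swallow touched members of $\C_H$ wholesale. Properties (a), (e), (f) are then immediate from the design of this reduction: $\tH'$ has minimum degree $\ge k$ since the process only stops when no vertex of degree $\le k-1$ remains, swallowing members of $\C_H$ wholesale gives (e), and (f) holds because a member of $\C_H$ ever touched by a deleted vertex is itself deleted. Property (b) is the potential computation above applied to $\tH$.

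What remains — and what I expect to be the crux — is (c) and (d): that the $\tH$-reduction removes only vertices lying in $\bigcup_{v\in V_{\le k-1}(\tH)}B_v$, and never removes a neighbour of a vertex in $V(\tH)\mi V(H)$. The key observation is that $\tH$ consists of $H$ together with extra vertices and edges incident only to them, and those extra vertices lie outside every block; hence, if a set $T\su\bigcup_v B_v\su V(H)$ has been removed so far and a vertex $x$ now has degree $\le k-1$ in $\tH-T$, then $x$ has the same neighbours inside $T$ in $H$ as in $\tH$ and a no larger total degree in $H$, so $x$ also has degree $\le k-1$ in $H-T$. Iterating this comparison, every vertex removed by the $\tH$-reduction would be removed by one of the block-building cascades in $H$, hence lies in some $B_{w_j}$; the delicate part is to show that this root $w_j$ is itself in $V_{\le k-1}(\tH)$ — this is where the hypothesis $V_{\le k-1}(\tH)\su V_{\le k-1}(H)\mi S$ enters, together with the rule that block cascades stop at other roots — and to deduce (d) from the fact that a neighbour of $V(\tH)\mi V(H)$ has strictly larger degree in $\tH$ than in $H$ and is therefore never reached by the constrained cascade. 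I expect most of the real work (and the point of the preparatory Section \ref{sect4}) to be in formalising a notion of ``reduction'' for which this step-by-step comparison between $H$ and $\tH$ is clean, and in choosing the ordering $w_1,\dots,w_r$ so that (a)--(f) all survive simultaneously; the $\C_H$-bookkeeping — ensuring each swallow is potential-neutral (the swallowed set always enters through an edge, which absorbs the ``$+1$'' in $\e_H(D)\le(k-1)\vert D\vert+1$) and never propagates into $V(\tH)\mi V(H)$ (safe since $\C_H$ has no neighbours there) — is the new ingredient beyond \cite{zurich}.
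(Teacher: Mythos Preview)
Your plan contains the right intuitions (the potential $\Phi$, cascading deletions, swallowing $\C_H$-members wholesale), and your bound $\sum_{s\in S}(k-\deg_H(s))\le 2\Phi(H)$ is correct for the $S$ you define. But your $S$ is too small, and consequently \emph{no} ordering of the roots produces blocks for which (c) can hold for every admissible $\tH$. Take $k=2$, $\C_H=\emptyset$, and let $H$ be the path $w_1\text{--}x\text{--}w_2$. Your rule ``delete $v\in V_{\le k-1}(H)$ of current degree $\le k-2$'' never fires, so $S=\emptyset$ and $w_1,w_2$ are both roots. With the order $(w_1,w_2)$ you get $B_{w_1}=\{w_1,x\}$, $B_{w_2}=\{w_2\}$. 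Now let $\tH$ adjoin two new vertices $u_1,u_2$ forming a triangle with $w_1$ (edges $u_1u_2,u_1w_1,u_2w_1$). Then $V_{\le 1}(\tH)=\{w_2\}\subseteq V_{\le 1}(H)\setminus S$, so $\tH$ is admissible, and (c) forces $V(\tH)\setminus V(\tH')\subseteq B_{w_2}=\{w_2\}$; but every induced subgraph of $\tH$ containing $\{w_1,x,u_1,u_2\}$ has $\deg(x)\le 1$ unless $w_2$ is also kept, and then $\deg(w_2)=1$. So no $\tH'$ exists. Swapping the order to $(w_2,w_1)$ is defeated by the mirror $\tH$ with the triangle attached at $w_2$. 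The difficulty is not the ordering: the vertex $x$ has to belong to whichever block $\tH$ activates, yet it can sit in only one.

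The paper fixes this by choosing $S$ on a different principle. For each $w\in V_{\le k-1}(H)$ it defines a canonical \emph{shadow} $\sh_H(w)$ (the minimal set closed under ``absorb a neighbour of degree $\le k-1$ after deletion'' and ``swallow a touched $D\in\C_H$''), and calls it \emph{tight} when $\e_H(\sh_H(w))=(k-1)\lvert\sh_H(w)\rvert$. Tightness forces $w$ to be the \emph{only} low-degree vertex in its shadow (Corollary~\ref{shadow-prop4}), and --- this is the point your sketch is missing --- it guarantees that the analogous shadow computed inside $\tH$ stays inside $\sh_H(w)$ and never meets a neighbour of $V(\tH)\setminus V(H)$ (Lemma~\ref{shadow-prop6}), because tightness leaves no slack in the potential for an extra outgoing edge. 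The proof then runs by induction on $v(H)$: peel off $\sh_H(w)$, recurse on $F=H-\sh_H(w)$, and set $B_w=\sh_H(w)$ when the shadow is tight, while dumping all low-degree vertices of a loose shadow into $S$ (Corollary~\ref{shadow-prop5} pays for them). In the example above $\sh_H(w_1)=V(H)$ is loose ($\e_H=2<3$), so the paper's $S=\{w_1,w_2\}$, there are no blocks, and the hypothesis on $\tH$ becomes $V_{\le 1}(\tH)=\emptyset$, making (c) vacuous. Your criterion ``current degree drops to $\le k-2$'' is strictly weaker than ``lies in a loose shadow'', and that gap is exactly what the counterexample exploits.
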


We prove Lemma \ref{mainlem} in Section \ref{sect5} after some preparations in Section \ref{sect4}.

Let us now prove Lemma \ref{colour}. First notice that Lemma \ref{colour} is true for $\l=J'$. Indeed, we can take the colouring of $G$ in which all vertices are uncoloured. This satisfies all conditions in Definition \ref{approcolour} (note that condition (vi) is vacuous for $\l=J'$).

Now let $J'\leq \l\leq J-1$ and assume that we are given an $\l$-appropriate colouring of $G$. We would like to extend this colouring by colouring some of the yet uncoloured vertices to obtain an $(\l+1)$-appropriate colouring. This would complete the induction step.

In order to avoid later confusion, let us denote the given $\l$-appropriate colouring of $G$ by $\varphi$. As in Definition \ref{approcolour}, for $i=1,\dots,401k$, let $X_i$ be the set of vertices coloured in colour $i$ and $y_i^{(\l)}$ the number of members of $\C_1\cup\dots\cup\C_\l$ that are monochromatic in colour $i$. By (v) all the coloured members of $\C_1\cup\dots\cup\C_\l$ are actually in $\C_{J'+1}\cup\dots\cup\C_\l$ and hence
\begin{equation}\label{y-sum}
y_1^{(\l)}+y_2^{(\l)}+\dots+y_{401k}^{(\l)}+\vert\lbrace D\in \C_{J'+1}\cup\dots\cup\C_\l\,\vert\, D\text{ uncoloured}\rbrace\vert=\vert\C_{J'+1}\vert+\dots+\vert\C_\l\vert.
\end{equation}

Let $\C_{\l+1}'\su \C_{\l+1}$ consist of those members of $\C_{\l+1}$ that are uncoloured in $\varphi$.

\begin{claim}\label{Clplus1strich} If $\vert\C_{\l+1}'\vert\leq \frac{1}{4}\vert\C_{\l+1}\vert$, then the $\l$-appropriate colouring $\varphi$ is $(\l+1)$-appropriate.
\end{claim}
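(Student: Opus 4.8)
The plan is simply to check that the colouring $\varphi$, which we already know to be $\l$-appropriate, satisfies all seven conditions of Definition \ref{approcolour} with $\l$ replaced by $\l+1$. Note first that $\l+1$ does lie in the admissible range $\lbrace J',\dots,J\rbrace$, since $J'\leq\l\leq J-1$. Conditions (i), (ii), (iv) and (v) do not refer to the index $\l$ at all, so they continue to hold.

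For condition (iii), the point is the obvious monotonicity: the members of $\C_1\cup\dots\cup\C_\l$ that are monochromatic in colour $i$ form a subcollection of the members of $\C_1\cup\dots\cup\C_{\l+1}$ that are monochromatic in colour $i$, so $y_i^{(\l)}\leq y_i^{(\l+1)}$ for each $i$. Combining this with the inequality $\e_G(X_i)\leq (k-1)\vert X_i\vert+y_i^{(\l)}$ coming from $\l$-appropriateness yields $\e_G(X_i)\leq (k-1)\vert X_i\vert+y_i^{(\l+1)}$, which is condition (iii) for $\l+1$.

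For condition (vii), the $(\l+1)$-version only quantifies over uncoloured $D\in\C_{\l+2}\cup\dots\cup\C_J$, and $\C_{\l+2}\cup\dots\cup\C_J\su\C_{\l+1}\cup\dots\cup\C_J$, so it is an immediate weakening of condition (vii) for $\l$. Finally, for condition (vi): when $J'+1\leq j\leq\l$ the required bound on the number of uncoloured members of $\C_j$ is inherited directly from $\l$-appropriateness, and the only genuinely new case is $j=\l+1$, where the uncoloured members of $\C_{\l+1}$ are by definition exactly the members of $\C_{\l+1}'$, so the hypothesis $\vert\C_{\l+1}'\vert\leq\tfrac14\vert\C_{\l+1}\vert$ is precisely what is needed. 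Thus $\varphi$ is $(\l+1)$-appropriate.

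I expect no real obstacle here — this is a bookkeeping claim, and the only things to be careful about are the monotonicity $y_i^{(\l)}\leq y_i^{(\l+1)}$ used for (iii) and the matching of index ranges used for (vi) and (vii).
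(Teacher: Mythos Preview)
Your proof is correct and follows essentially the same approach as the paper: both verify conditions (i)--(vii) of Definition \ref{approcolour} directly, using the monotonicity $y_i^{(\l)}\leq y_i^{(\l+1)}$ for (iii), the inclusion $\C_{\l+2}\cup\dots\cup\C_J\su\C_{\l+1}\cup\dots\cup\C_J$ for (vii), and the hypothesis $\vert\C_{\l+1}'\vert\leq\tfrac14\vert\C_{\l+1}\vert$ for the new case $j=\l+1$ of (vi).
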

\begin{proof}Conditions (i), (ii), (iv) and (v) in Definition \ref{approcolour} do not depend on the value of $\l$ and are satisfied for $\varphi$. In condition (iii) we have $y_i^{(\l+1)}\geq y_i^{(\l)}$ for each colour $i$ and therefore
$$\e_G(X_i)\leq (k-1)\vert X_i\vert+y_i^{(\l)}\leq (k-1)\vert X_i\vert+y_i^{(\l+1)}.$$
Note that condition (vi) is already satisfied for $J'+1\leq j\leq \l$ and is satisfied for $j=\l+1$ according to the assumption $\vert\C_{\l+1}'\vert\leq \frac{1}{4}\vert\C_{\l+1}\vert$. Condition (vii) is a strictly weaker statement for $\l+1$ than for $\l$.
\end{proof}

Hence we may assume that $\vert\C_{\l+1}'\vert> \frac{1}{4}\vert\C_{\l+1}\vert$.

Set
\begin{equation}\label{H-def1}
H=G-(\C_1\cup\dots\cup\C_{\l+1})-(X_1\cup\dots\cup X_{401k}),
\end{equation}
i.e.\ $H$ is obtained from $G$ by deleting all members of $\C_1\cup\dots\cup\C_{\l+1}$ and all coloured vertices. Note that this can also be expressed as
\begin{equation}\label{H-def2}
H=G-(X_1\cup\dots\cup X_{401k})-(\C_1\cup\dots\cup\C_{J'})-\C_{\l+1}'-\bigcup_{\substack{D\in \C_{J'+1}\cup\dots\cup\C_{\l}\\ \text{uncoloured in }\varphi}}D
\end{equation}
and here all the deleted sets of vertices are disjoint.

\begin{claim}\label{H-nonempty} The graph $H$ is non-empty.
\end{claim}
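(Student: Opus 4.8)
The plan is to bound from above the total number of vertices deleted from $G$ to form $H$ (using the expression \eqref{H-def2}, where all deleted sets are disjoint) and show this bound is strictly less than $n=v(G)$.

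First I would estimate the sizes of the three types of deleted sets in \eqref{H-def2}. The coloured vertices $X_1\cup\dots\cup X_{401k}$: since the colouring $\varphi$ is $\l$-appropriate, condition (iii) gives $\e_G(X_i)\leq (k-1)|X_i|+y_i^{(\l)}$, and since $G$ has minimum degree at least $k$, Claim \ref{claim-sets-many-edges} (applied to each $X_i$, provided $|X_i|\leq n-k+1$) gives $\e_G(X_i)\geq (k-1)|X_i|+1$ whenever $X_i\neq\es$; more to the point, one should just bound $\sum_i |X_i|$ crudely. Actually the cleanest route is: every coloured vertex lies in a monochromatic member of $\C$ by condition (ii), and coloured members of $\C_1\cup\dots\cup\C_\l$ actually lie in $\C_{J'+1}\cup\dots\cup\C_\l$ by condition (v), so $\sum_i |X_i| = \sum_{D\in\C_{J'+1}\cup\dots\cup\C_\l,\ D\text{ coloured}}|D| \leq \Vert\C_{J'+1}\Vert+\dots+\Vert\C_\l\Vert \leq \Vert\C\Vert$. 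Together with the uncoloured members of $\C_1\cup\dots\cup\C_\l$ (whose total size is also at most $\Vert\C\Vert$) and $\C_{\l+1}'$ (total size at most $\Vert\C_{\l+1}\Vert\leq\Vert\C\Vert$), the total number of deleted vertices is at most $3\Vert\C\Vert$. By Claim \ref{no-big-good-set} each member of $\C^*$ has size at most $n/k$, and... but that alone is not enough, so I would instead use $\Vert\C\Vert\leq\Vert\C^*\Vert$ and bound $\Vert\C^*\Vert$.

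The key quantitative input is that $\Vert\C^*\Vert$ is bounded away from $n$. Here I would argue: the members of $\C^*$ are disjoint subsets of $V(G)$, so trivially $\Vert\C^*\Vert\leq n$, but we need strict inequality with room to spare after multiplying by $3$. A better approach: bound the number of deleted vertices directly using \eqref{eqJstrich1} and \eqref{eqJstrich2}-type estimates. In fact the three summands in \eqref{H-def2} are: (1) coloured vertices, total $\leq \Vert\C_{J'+1}\Vert+\dots+\Vert\C_J\Vert$ since every coloured $D$ lies in some $\C_j$ with $J'+1\leq j\leq \l\leq J$; (2) members of $\C_1\cup\dots\cup\C_{J'}$, total $=\Vert\C_1\Vert+\dots+\Vert\C_{J'}\Vert<\frac{3n}{100k}$; (3) uncoloured members of $\C_{J'+1}\cup\dots\cup\C_\l$ together with $\C_{\l+1}'$, total $\leq \Vert\C_{J'+1}\Vert+\dots+\Vert\C_{\l+1}\Vert$. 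So altogether the number of deleted vertices is at most $\frac{3n}{100k}+2(\Vert\C_{J'+1}\Vert+\dots+\Vert\C_J\Vert)\leq\frac{3n}{100k}+2\Vert\C\Vert\leq\frac{3n}{100k}+2\cdot\frac{n}{k}$, which need not be less than $n$ for small $k$. Thus I expect the real argument must be more careful — most likely it deletes at most $\Vert\C^*\Vert\leq n$ vertices from \emph{disjoint} sets all of which are good sets or colour classes contained in unions of good sets, and then uses that $\C^*$ does not cover all of $V(G)$ because $G-D$ is nonempty for $D\in\C^*$ (Claim \ref{remove-good-set}); combined with the fact that the good sets in $\C^*$ are pairwise non-adjacent while $G$ is connected, so there is at most one nonempty "rest" — hmm, that gives $\Vert\C^*\Vert\leq n-1$ only.

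The main obstacle, then, is getting a clean strict upper bound on the number of deleted vertices that leaves $H$ nonempty. I believe the right move is: $H$ is obtained by deleting $X_1\cup\dots\cup X_{401k}$ together with all members of $\C_1\cup\dots\cup\C_{\l+1}$; the coloured vertices all lie inside monochromatic members of $\C$ (condition (ii)), hence inside $\bigcup_{D\in\C}D\subseteq\bigcup_{D\in\C^*}D$, and the members of $\C_1\cup\dots\cup\C_{\l+1}$ also lie in $\bigcup_{D\in\C^*}D$. Therefore $V(G)\mi V(H)\subseteq\bigcup_{D\in\C^*}D$, so $v(H)\geq n-\Vert\C^*\Vert$. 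Now $\Vert\C^*\Vert\geq\frac{n}{3k}$ was the lower bound we used; for the upper bound, note that since $J'>1$ and by \eqref{eqJstrich1} applied with $J'-1$ we have $\Vert\C_1\Vert+\dots+\Vert\C_{J'-1}\Vert<\frac{n}{100k}$, and more importantly not every good set can be in $\C_1\cup\dots\cup\C_{J'}$, so... Actually, rather than belabour this, I would conclude by observing that it suffices to show $\Vert\C^*\Vert<n$: this holds because $G$ has minimum degree at least $k$ and $\eps<\tfrac12$ so $G$ is connected, the members of $\C^*$ are pairwise non-adjacent (by the fourth rule of Definition \ref{def-good-set} applied to maximal good sets), hence if they covered all of $V(G)$ then $G$ would be a single member of $\C^*$ (connectedness forces one part), contradicting Claim \ref{no-big-good-set} since $n>n/k$. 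Wait — that only rules out $\C^*$ covering $V(G)$ with a single set; with several non-adjacent parts and $G$ connected, $\C^*$ cannot cover $V(G)$ at all unless $m=1$. So $V(G)\mi\bigcup_{D\in\C^*}D\neq\es$, giving $v(H)\geq 1$, i.e.\ $H$ is non-empty. I would write this up as: by connectedness of $G$ and pairwise non-adjacency of the members of $\C^*$, if $\bigcup_{D\in\C^*}D=V(G)$ then $|\C^*|=1$ and $D_1=V(G)$, contradicting $|D_1|\leq n/k<n$; hence $V(G)\neq\bigcup_{D\in\C^*}D\supseteq V(G)\mi V(H)$, so $V(H)\neq\es$.

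\begin{proof}
By \eqref{H-def2}, the graph $H$ is obtained from $G$ by deleting the coloured vertices $X_1\cup\dots\cup X_{401k}$ together with all members of $\C_1\cup\dots\cup\C_{\l+1}$ (equivalently, all of $\C_1\cup\dots\cup\C_{J'}$, all of $\C_{\l+1}'$, and all uncoloured members of $\C_{J'+1}\cup\dots\cup\C_\l$). By condition (ii) in Definition \ref{approcolour}, every coloured vertex lies in some member of $\C$ that is monochromatic, so in particular every coloured vertex lies in $\bigcup_{D\in\C}D$. Since also $\C_1\cup\dots\cup\C_{\l+1}\su\C\su\C^{*}$, we conclude
$$V(G)\mi V(H)\su \bigcup_{D\in\C^{*}}D.$$

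It therefore suffices to show that $\bigcup_{D\in\C^{*}}D\neq V(G)$. Recall that $G$ is connected (as $\eps<\tfrac12$) and that the members of $\C^{*}$ are pairwise disjoint with no edges between them, as noted after Corollary \ref{coro-edges-good-set}. Hence if $\bigcup_{D\in\C^{*}}D=V(G)$, then $G$, being connected, cannot be split into two or more non-empty parts with no edges in between, so $\vert\C^{*}\vert=1$ and $D_1=V(G)$. But this contradicts Claim \ref{no-big-good-set}, which gives $\vert D_1\vert\leq\frac{n}{k}<n$ (using $n\geq k+1$). Therefore $\bigcup_{D\in\C^{*}}D\neq V(G)$, so $V(H)\neq\es$ and $H$ is non-empty.
\end{proof}
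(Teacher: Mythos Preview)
Your proof contains a genuine gap in the key step. You assert that ``by condition (ii) in Definition \ref{approcolour}, every coloured vertex lies in some member of $\C$ that is monochromatic.'' But this is not what condition (ii) says. Condition (ii) states that \emph{for each $D\in\C$}, the set $D$ is either monochromatic or completely uncoloured; it places no restriction whatsoever on vertices outside $\bigcup_{D\in\C}D$. Nothing in Definition \ref{approcolour} forbids a vertex $u\notin\bigcup_{D\in\C}D$ from being coloured (indeed, in the inductive construction later in Section \ref{sect3}, the sets $X_i'\su V(H)$ can contain such vertices). Hence the inclusion $X_1\cup\dots\cup X_{401k}\su\bigcup_{D\in\C^{*}}D$ is unjustified, and the containment $V(G)\mi V(H)\su\bigcup_{D\in\C^{*}}D$ fails in general. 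Your subsequent connectedness argument showing $\bigcup_{D\in\C^{*}}D\neq V(G)$ is correct, but it no longer suffices.

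The paper's proof takes a completely different and more direct route: it exploits the standing assumption $\vert\C_{\l+1}'\vert>\frac{1}{4}\vert\C_{\l+1}\vert$ (from Claim \ref{Clplus1strich}) to pick some $D\in\C_{\l+1}'$, uses connectedness of $G$ to find a neighbour $v$ of $D$ outside $D$, observes that $v$ lies in no member of $\C_1\cup\dots\cup\C_{\l+1}$ (since members of $\C^{*}$ are pairwise non-adjacent), and then invokes condition (vii) --- not condition (ii) --- to conclude that $v$ is uncoloured (because $D\in\C_{\l+1}'$ is uncoloured and $v$ is its neighbour). This exhibits an explicit vertex $v\in V(H)$. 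The crucial ingredient you are missing is condition (vii), which is precisely the condition that controls the colouring near uncoloured members of $\C_{\l+1}\cup\dots\cup\C_J$.
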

\begin{proof} By Claim \ref{Clplus1strich} we may assume $\vert\C_{\l+1}'\vert> \frac{1}{4}\vert\C_{\l+1}\vert$. In particular, $\C_{\l+1}'$ is non-empty, so let $D\in \C_{\l+1}'\su \C_{\l+1}$. Since $G$ is connected, $D$ is adjacent to some vertex $v\in V(G)\mi D$. Since there are no edges between the members of $\C$, the vertex $v$ cannot lie in any member of $\C_1\cup\dots\cup\C_{\l+1}$. Furthermore $D\in \C_{\l+1}'$ is uncoloured in the $\l$-appropriate colouring $\varphi$ and by condition (vii) from Definition \ref{approcolour}, this implies that $v$ is also uncoloured in $\varphi$. Hence $v\not\in X_1\cup\dots\cup X_{401k}$. So by (\ref{H-def1}) we obtain $v\in V(H)$ and in particular the graph $H$ is non-empty.
\end{proof}

Let us derive some use some useful properties of $H$ in order to apply  Lemma \ref{mainlem} to $H$ afterwards.

\begin{lem}\label{H-edge-defect}
$(k-1)v(H)-e(H)\leq 12\vert\C_{\l+1}'\vert$.
\end{lem}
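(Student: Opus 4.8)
The plan is to bound $(k-1)v(H)-e(H)$ by carefully accounting for the edges that are \emph{lost} when we pass from $G$ to $H$ by deleting the various sets of vertices appearing in (\ref{H-def2}). Recall that $G$ has minimum degree at least $k$, so $e(G)\geq kn/2$, but more importantly $(k-1)n-e(G)\leq t$ since $e(G)\geq(k-1)n-t$. If $X\su V(G)$ is the union of all deleted sets, then $v(H)=n-|X|$ and $e(H)=e(G)-\e_G(X)$, so
\[
(k-1)v(H)-e(H)=\big((k-1)n-e(G)\big)+\big(\e_G(X)-(k-1)|X|\big)\leq t+\big(\e_G(X)-(k-1)|X|\big).
\]
Thus it suffices to show that $\e_G(X)-(k-1)|X|$ is not much larger than $12|\C_{\l+1}'|$; since $t\leq\frac{(k-2)(k+1)}{2}-1$ is a constant while (as established earlier, e.g.\ via Claim~\ref{Jstrichgross} and the hypothesis $|\C_{\l+1}'|>\tfrac14|\C_{\l+1}|$) $|\C_{\l+1}'|\geq 1$, absorbing $t$ should be cheap. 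The cleaner route, which I expect the authors take, is to bound $\e_G(X)$ directly by summing the contributions of each deleted piece and using that there are no edges between distinct members of $\C^*$.

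First I would decompose $X$ according to (\ref{H-def2}) into the coloured vertices $X_1\cup\dots\cup X_{401k}$ and the uncoloured good sets $\Z:=(\C_1\cup\dots\cup\C_{J'})\cup\C_{\l+1}'\cup\{D\in\C_{J'+1}\cup\dots\cup\C_\l:D\text{ uncoloured}\}$. For each colour class $X_i$, condition (iii) of the $\l$-appropriate colouring gives $\e_G(X_i)\leq(k-1)|X_i|+y_i^{(\l)}$. For each uncoloured good set $D$, Corollary~\ref{coro-edges-good-set} gives $\e_G(D)\leq(k-1)|D|+1$. Since these deleted sets are pairwise disjoint and (crucially) there are \emph{no edges between distinct members of $\C^*$, hence none between distinct members of $\Z$}, an edge incident to $X$ is either entirely inside one colour class, or between a colour class and something, or incident to exactly one member of $\Z$; a careful union bound then yields something like
\[
\e_G(X)\leq\sum_{i=1}^{401k}\e_G(X_i)+\sum_{D\in\Z}\e_G(D)\ \le\ (k-1)|X|+\sum_{i=1}^{401k}y_i^{(\l)}+|\Z|,
\]
where the edges between a member of $\Z$ and a colour class are charged to the $\Z$-side. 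Hence $(k-1)v(H)-e(H)\leq t+\sum_i y_i^{(\l)}+|\Z|$.

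Now I would bound $\sum_i y_i^{(\l)}+|\Z|$ using the identity (\ref{y-sum}) and the structural facts about $J'$ and the $\C_j$. The set $\Z$ consists of $|\C_1\cup\dots\cup\C_{J'}|=2^{J'}-1$ members (all of $\C_1,\dots,\C_{J'}$, which are uncoloured by (v)), plus $|\C_{\l+1}'|$ members of $\C_{\l+1}$, plus the uncoloured members of $\C_{J'+1}\cup\dots\cup\C_\l$, which by (\ref{y-sum}) number exactly $(|\C_{J'+1}|+\dots+|\C_\l|)-\sum_i y_i^{(\l)}$. Adding $\sum_i y_i^{(\l)}$ back, the $y$-terms cancel and we get
\[
\sum_{i}y_i^{(\l)}+|\Z|\ =\ (2^{J'}-1)+|\C_{\l+1}'|+\big(|\C_{J'+1}|+\dots+|\C_\l|\big)\ \le\ 2^{J'}+|\C_{\l+1}'|+|\C_\l|,
\]
using $|\C_{J'+1}|+\dots+|\C_\l|\leq 2|\C_\l|\leq 2|\C_{\l+1}|/1\dots$ — here I must be careful, since $|\C_{j+1}|=2|\C_j|$ the partial sum $\sum_{j=J'+1}^{\l}|\C_j|$ is at most $|\C_\l|$ (a geometric sum), and $|\C_\l|\leq\tfrac12|\C_{\l+1}|<2|\C_{\l+1}'|$ since $|\C_{\l+1}'|>\tfrac14|\C_{\l+1}|$. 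Likewise $2^{J'}\leq 2\cdot 2^{J'-1}$ and by Claim~\ref{Jstrichgross} combined with the construction one can check $2^{J'-1}=|\C_{J'+1}|/2\leq\dots$ is comparable to $|\C_{\l+1}'|$ when $\l\geq J'$ — more simply, $2^{J'}=2|\C_{J'+1}|\cdot\tfrac14\cdot\ldots$; I would just verify $2^{J'}\leq 4|\C_{\l+1}'|$ directly from $J'\leq\l$, $2^{J'}\le 2^{\l}\cdot 2^{J'-\l}$... the honest bound is $2^{J'}\leq 2|\C_{J'+1}|\leq 2|\C_{\l+1}|<8|\C_{\l+1}'|$ when $\l+1>J'$, i.e.\ always. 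Collecting: $\sum_i y_i^{(\l)}+|\Z|\leq 8|\C_{\l+1}'|+2|\C_{\l+1}'|+|\C_{\l+1}'|$-ish, and folding in $t\leq|\C_{\l+1}'|\cdot(\text{const})$ one lands at the clean bound $12|\C_{\l+1}'|$.

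The main obstacle, and the step requiring genuine care rather than bookkeeping, is the edge-counting inequality $\e_G(X)\leq(k-1)|X|+\sum_i y_i^{(\l)}+|\Z|$: one must make sure the ``no edges between members of $\C^*$'' fact is deployed correctly so that edges between an uncoloured good set and a coloured vertex get counted once (on the good-set side), edges inside or incident to a single colour class get the $(k-1)|X_i|+y_i^{(\l)}$ bound from (iii), and nothing is double-counted or, worse, under-counted. The second delicate point is the arithmetic tying $t$ and $2^{J'}$ and the geometric sum $\sum_{j=J'+1}^{\l}|\C_j|$ to a small multiple of $|\C_{\l+1}'|$; this relies on $|\C_{\l+1}'|>\tfrac14|\C_{\l+1}|$ (the case we reduced to via Claim~\ref{Clplus1strich}), on $|\C_{j+1}|=2|\C_j|$, and on Claim~\ref{Jstrichgross} to kill the additive $t$. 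Everything else is routine substitution into $(k-1)v(H)-e(H)=\big((k-1)n-e(G)\big)+\big(\e_G(X)-(k-1)|X|\big)$.
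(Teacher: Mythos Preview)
Your approach is essentially identical to the paper's: decompose via (\ref{H-def2}), bound each deleted colour class by condition (iii) and each deleted good set by Corollary \ref{coro-edges-good-set}, use (\ref{y-sum}) to cancel the $y_i^{(\l)}$ terms against the count of uncoloured members of $\C_{J'+1}\cup\dots\cup\C_\l$, and then convert everything to a multiple of $|\C_{\l+1}'|$ via Claim \ref{Jstrichgross} and $|\C_{\l+1}|<4|\C_{\l+1}'|$.

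Two small corrections. First, your worry about needing ``no edges between members of $\C^*$'' for the edge count is a red herring: plain subadditivity $\e_G(A\cup B)\leq\e_G(A)+\e_G(B)$ for disjoint $A,B$ already yields $\e_G(X)\leq\sum_i\e_G(X_i)+\sum_{D\in\Z}\e_G(D)$, which is exactly what the paper uses. Second, your claim that $\sum_{j=J'+1}^{\l}|\C_j|\leq|\C_\l|$ is false in general (the geometric sum equals $2^{\l}-2^{J'}$, which exceeds $|\C_\l|=2^{\l-1}$ whenever $J'<\l$). The paper avoids this by not splitting: it keeps $|\C_1|+\dots+|\C_{\l+1}|=2^{\l+1}-1$ together and bounds $t+(2^{\l+1}-1)<2^{J'}+2^{\l+1}-1\leq 2^{\l}+2^{\l+1}-1<3\cdot 2^{\l}=3|\C_{\l+1}|<12|\C_{\l+1}'|$ in one line.
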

\begin{proof} From (\ref{H-def2}) we obtain
\begin{equation}\label{H-size}
v(H)=n-(\vert X_1\vert+\dots+\vert X_{401k}\vert)-\sum_{D\in \C_1\cup\dots\cup\C_{J'}}\vert D\vert-\sum_{D\in \C_{\l+1}'}\vert D\vert-\sum_{\substack{D\in \C_{J'+1}\cup\dots\cup\C_{\l}\\ \text{uncoloured}}}\vert D\vert.
\end{equation}
On the other hand
$$e(H)\geq e(G)-(\e_G(X_1)+\dots+\e_G(X_{401k}))-\sum_{D\in \C_1\cup\dots\cup\C_{J'}}\e_G(D)-\sum_{D\in \C_{\l+1}'}\e_G(D)-\sum_{\substack{D\in \C_{J'+1}\cup\dots\cup\C_{\l}\\ \text{uncoloured}}}\e_G(D).$$
By condition (iii) in Definition \ref{approcolour} and by Corollary \ref{coro-edges-good-set} this implies
\begin{multline*}
e(H)\geq ((k-1)n-t)-\sum_{i=1}^{401k}((k-1)\vert X_i\vert+y_i^{(\l)})-\sum_{D\in \C_1\cup\dots\cup\C_{J'}}((k-1)\vert D\vert+1)-\sum_{D\in \C_{\l+1}'}((k-1)\vert D\vert+1)\\
-\sum_{\substack{D\in \C_{J'+1}\cup\dots\cup\C_{\l}\\ \text{uncoloured}}}((k-1)\vert D\vert+1).
\end{multline*}
Together with (\ref{H-size}) we get
\begin{multline*}
(k-1)v(H)-e(H)\leq t+(y_1^{(\l)}+\dots+y_{401k}^{(\l)})+(\vert\C_1\vert+\dots+\vert\C_{J'}\vert)+\vert\C_{\l+1}'\vert\\
+\vert\lbrace D\in \C_{J'+1}\cup\dots\cup\C_{\l}\,\vert\, D\text{ uncoloured}\rbrace\vert
\end{multline*}
and by (\ref{y-sum})
$$(k-1)v(H)-e(H)\leq t+(\vert\C_1\vert+\dots+\vert\C_{J'}\vert)+(\vert\C_{J'+1}\vert+\dots+\vert\C_{\l}\vert)+\vert\C_{\l+1}'\vert\leq t+\vert\C_1\vert+\dots+\vert\C_{\l+1}\vert.$$
Using Claim \ref{Jstrichgross} we obtain
$$(k-1)v(H)-e(H)\leq 2^{J'}+(2^{0}+\dots+2^{\l})=2^{J'}+2^{\l+1}-1<3\cdot 2^{\l}=3\vert \C_{\l+1}\vert.$$
By Claim \ref{Clplus1strich} we can assume $\vert \C_{\l+1}\vert<4\vert \C_{\l+1}'\vert$ and this gives $(k-1)v(H)-e(H)\leq 12\vert\C_{\l+1}'\vert$ as desired.
\end{proof}

\begin{claim}\label{H-propC} For every $D\in \C$ we have $D\su V(H)$ or $D\cap V(H)=\es$.
\end{claim}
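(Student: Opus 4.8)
The plan is to argue by a short case analysis according to which of the subcollections $\C_1,\dots,\C_J$ the good set $D$ belongs to, using only condition (ii) of Definition \ref{approcolour} together with the fact recorded just after Corollary \ref{coro-edges-good-set} that the members of $\C^{*}$ — and hence of $\C\su\C^{*}$ — are pairwise disjoint.

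First I would dispose of the case $D\in\C_1\cup\dots\cup\C_{\l+1}$. Here $D$ is one of the vertex sets explicitly deleted from $G$ in the definition (\ref{H-def1}) of $H$, so $D\cap V(H)=\es$ and there is nothing further to check.

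Next I would treat the remaining case $D\in\C_{\l+2}\cup\dots\cup\C_J$. Since $D\in\C^{*}$ and the members of $\C^{*}$ are pairwise disjoint, $D$ is disjoint from every member of $\C_1\cup\dots\cup\C_{\l+1}$, so no vertex of $D$ is removed from $G$ as part of $\C_1\cup\dots\cup\C_{\l+1}$ when forming $H$. Hence a vertex of $D$ can fail to lie in $V(H)$ only if it is coloured, i.e.\ only if it lies in $X_1\cup\dots\cup X_{401k}$. Now I invoke condition (ii): the set $D\in\C$ is either monochromatic or completely uncoloured. If $D$ is monochromatic in some colour $i$, then $D\su X_i$, so all of $D$ is deleted and $D\cap V(H)=\es$. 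If instead $D$ is completely uncoloured, then $D$ is disjoint from every $X_i$, and combined with the previous observation this gives $D\su V(H)$. In every case $D\su V(H)$ or $D\cap V(H)=\es$, as required.

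There is essentially no obstacle here; the only point to keep track of is that $V(H)$ is missing vertices for two distinct reasons — membership in $\C_1\cup\dots\cup\C_{\l+1}$ and being coloured — and that condition (ii) is precisely what rules out a good set in $\C$ being only partially coloured, which is the one way the dichotomy could otherwise fail.
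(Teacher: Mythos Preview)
Your argument is correct and essentially identical to the paper's proof: both split into the cases $D\in\C_1\cup\dots\cup\C_{\l+1}$ (deleted outright) and $D\in\C_{\l+2}\cup\dots\cup\C_J$ (disjoint from those, then apply condition (ii) to handle the colour classes). There is nothing to add.
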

\begin{proof}If $D\in \C_1\cup\dots\cup\C_{\l+1}$, then by (\ref{H-def1}) clearly $D\cap V(H)=\es$. Otherwise $D$ is disjoint from all elements of $\C_1\cup\dots\cup\C_{\l+1}$. Also, by condition (ii) in Definition \ref{approcolour} the $D$ set is either contained in some $X_i$ or disjoint from $X_1\cup\dots\cup X_{401k}$. By (\ref{H-def1}) in the first case we have $D\cap V(H)=\es$ and in the second case $D\su V(H)$.
\end{proof}

Let $\C_H$ be the collection of those $D\in \C$ with $D\su V(H)$. Note that by (\ref{H-def1}) we have $\C_H\su \C_{\l+2}\cup\dots\cup\C_{J}$.

Now we check that $H$ together with the collection $\C_H$ of subsets of $V(H)$ satisfies the assumptions of Lemma \ref{mainlem}: The elements of $\C_H$ are disjoint, as all elements of $\C$ are disjoint. Each $D\in \C_H$ is non-empty and we have by Corollary \ref{coro-edges-good-set}
$$\e_H(D)\leq \e_G(D)\leq (k-1)\vert D\vert+1.$$
Also, for each $D\in \C_H$ by (\ref{H-def1}) we have $D\in \C_{\l+2}\cup\dots\cup\C_{J}$ and $D$ is disjoint from $X_1\cup\dots\cup X_{401k}$, hence $D$ is uncoloured in the colouring $\varphi$. By condition (vii) in Definition \ref{approcolour} this implies that all neighbours of $D$ in $G$ are also uncoloured (and therefore not in $X_1\cup\dots\cup X_{401k}$). Since $D$ does not have edges to any other member of $\C$, this implies that $H$ contains all neighbours of $D$ in $G$. Thus, for each $v\in D$ we have $\deg_H(v)=\deg_G(v)\geq k$.

Finally, from (\ref{H-def2}) it is clear that
$$v(H)\leq n-(\Vert \C_1\Vert+\dots+\Vert \C_{J'}\Vert)$$
and using (\ref{eqJstrich1}) this gives $v(H)\leq n-\frac{n}{100k}<(1-\eps)n$. Since $G$ does not contain a subgraph on at most $(1-\eps)n$ vertices with minimum degree at least $k$, we can conclude that $H$ does not have a subgraph of minimum degree at least $k$.

Thus, $H$ does indeed satisfy all assumptions of Lemma \ref{mainlem}. By applying this lemma we find a subset $S\su V_{\leq k-1}(H)$ with $V_{\leq k-2}(H)\su S$ and
\begin{equation}\label{ineq-s-sect3-1}
\sum_{s\in S}(k-\deg_H(s))\leq 2((k-1)v(H)-e(H))
\end{equation}
as well as disjoint subsets $B_v\su V(H)$ for each vertex $v\in V_{\leq k-1}(H)\mi S=V_{k-1}(H)\mi S$, such that the statement in Lemma \ref{mainlem} holds.

From (\ref{ineq-s-sect3-1}) and Lemma \ref{H-edge-defect} we obtain
$$\sum_{s\in S}(k-\deg_H(s))\leq 2((k-1)v(H)-e(H))\leq 24\vert\C_{\l+1}'\vert.$$
Since $\deg_H(s)\leq k-1$ for each $s\in S$, this in particular implies $\vert S\vert\leq 24\vert\C_{\l+1}'\vert$. This, in turn, gives
\begin{equation}\label{ineq-s-sect3-2}
\sum_{s\in S}(k+1-\deg_H(s))=\vert S\vert+\sum_{s\in S}(k-\deg_H(s))\leq 48\vert\C_{\l+1}'\vert.
\end{equation}

The next step in the proof is to obtain a colouring of some of the members of $\C_{\l+1}'$ (recall that all members of $\C_{\l+1}'$ are uncoloured in $\varphi$). Afterwards, we extend $\varphi$ to an $(\l+1)$-appropriate colouring of $G$ by using the colouring on $\C_{\l+1}'$ and the statement in Lemma \ref{mainlem}.

Let $S'\su S$ be the set of all $s\in S$ that are adjacent to at least one member of $\C_{\l+1}'$ in the graph $G$.

For each $s\in S'$ we define a set $\C(s)\su\C_{\l+1}'$ as follows: If at least $k+1-\deg_H(s)$ members of $\C_{\l+1}'$ are adjacent to $s$, then we pick any $k+1-\deg_H(s)$ of them to form $\C(s)$.  If less than $k+1-\deg_H(s)$ members of $\C_{\l+1}'$ are adjacent to $s$, let $\C(s)$ consist of all of them. In either case we get $\vert \C(s)\vert\leq k+1-\deg_H(s)\leq k+1$ and each member of $\C(s)$ is adjacent to $s$.

By (\ref{ineq-s-sect3-2}) we have
\begin{equation}\label{ineq-s-sect3-3}
\sum_{s\in S'}\vert \C(s)\vert\leq \sum_{s\in S}(k+1-\deg_H(s))\leq 48\vert\C_{\l+1}'\vert.
\end{equation}
Let us call $D\in \C_{\l+1}'$ \emph{popular}, if $D$ is contained in more than $200$ sets $\C(s)$ for $s\in S'$. Otherwise, let us call $D$ \emph{non-popular}. By (\ref{ineq-s-sect3-3}) the number of popular elements in $\C_{\l+1}'$ is at most
$$\frac{1}{200}\sum_{s\in S'}\vert \C(s)\vert\leq \frac{48}{200}\vert\C_{\l+1}'\vert<\frac{1}{4}\vert\C_{\l+1}'\vert\leq \frac{1}{4}\vert\C_{\l+1}\vert.$$

Our goal is to colour $\C_{\l+1}'$ with the $401k$ colours in such a way that only the popular sets are left uncoloured.

In order to construct the colouring, let us define a list $L(s)$ of up to $k$ colours for each vertex $s\in S'$: If the neighbours of $s$ in $G$ have at most $k$ different colours in the colouring $\varphi$, then let the list $L(s)$ contain all of these colours. If the neighbours of $s$ in $G$ have more than $k$ different colours in the colouring $\varphi$, then we pick any $k$ of these colours for the list $L(s)$.

Now we colour the non-popular elements of $\C_{\l+1}'$ with the $401k$ colours, so that each non-popular element of $\C_{\l+1}'$ receives exactly one colour. Furthermore, for each $s\in S'$, we want to ensure that all the non-popular elements of $\C(s)$ receive distinct colours and none of the elements of $\C(s)$ receives a colour on the list $L(s)$. This is possible by a simple greedy algorithm, since each non-popular $D\in \C_{\l+1}'$ is an element of at most 200 sets $\C(s)$, and each of these sets $\C(s)$ has at most $k$ other elements and for each of these sets $\C(s)$ there are at most $k$ forbidden colours on the list $L(s)$. So each of the at most 200 sets $\C(s)$ can rule out at most $2k$ colours for $D$, and $200\cdot 2k<401k$.

Let us denote this colouring of $\C_{\l+1}'$ by $\psi$. Then for each $s\in S'$, all the non-popular elements of $\C(s)$ have distinct colours in the colouring $\psi$ and none of them has a colour on the list $L(s)$. Furthermore, all non-popular members of $\C_{\l+1}'$ are coloured with exactly one colour in $\psi$. Since $\C_{\l+1}'$ has at most $\frac{1}{4}\vert\C_{\l+1}\vert$ popular elements, this means that at least $\vert\C_{\l+1}'\vert-\frac{1}{4}\vert\C_{\l+1}\vert$ members of $\C_{\l+1}'$ are coloured in $\psi$. For each $i=1,\dots,401k$, let $\Z_i\su \C_{\l+1}'$ consist of all the members of $\C_{\l+1}'$ which are coloured with colour $i$ in $\psi$.

Our goal is to construct an $(\l+1)$-appropriate colouring of $G$. For that we use the colourings $\varphi$ and $\psi$ (recall that $\psi$ is a colouring on $\C_{\l+1}'$ and all of $\C_{\l+1}'$ is uncoloured in $\varphi$), but we may also need to extend the colouring to some vertices in $H$. More precisely, for each of the $401k$ colours we apply the statement in Lemma \ref{mainlem} to a different graph $\tH$ (in each case obtaining a subgraph $\tH'$ with the six properties listed in the lemma), and then also colour the set $V(\tH)\mi V(\tH')\su V(H)$ with the corresponding colour. We then check that together with $\varphi$ and $\psi$ this defines an $(\l+1)$-appropriate colouring of $G$.

In order to apply this plan, consider any of the $401k$ colours. To minimize confusion, let us call this colour red. As before, let $X_{\R}$ be the set of vertices of $G$ coloured red in $\varphi$ and $y_{\R}^{(\l)}$ the number of members of $\C_1\cup\dots\cup\C_{\l}$ that are monochromatically red in $\varphi$. The set $\Z_{\R}\su \C_{\l+1}'$ consists of the red members of $\C_{\l+1}'$ in the colouring $\psi$.

Let us consider the graph
\begin{equation}\label{def-tH-red}
\tH_\R=G-X_\R-\Z_\R.
\end{equation}
Let us now check that this graph has all properties required to act as $\tH$ in Lemma \ref{mainlem}.

\begin{claim}\label{tH-red-prop0}
$\tH_\R$ contains $H$ as a proper induced subgraph.
\end{claim}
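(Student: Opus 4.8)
The plan is to verify two things: that $\tH_\R$ genuinely contains $H$ as an induced subgraph, and that this containment is proper (i.e. $v(\tH_\R) > v(H)$). For the first part I would unwind the definitions. Recall $H = G-(\C_1\cup\dots\cup\C_{\l+1})-(X_1\cup\dots\cup X_{401k})$ from \eqref{H-def1}, while $\tH_\R = G-X_\R-\Z_\R$. So $V(\tH_\R) = V(G)\mi(X_\R\cup(\bigcup_{D\in\Z_\R}D))$. Since $X_\R$ is one of the $X_i$ and $\Z_\R\su\C_{\l+1}'\su\C_{\l+1}$, every vertex deleted in forming $\tH_\R$ is also deleted in forming $H$; hence $V(H)\su V(\tH_\R)$. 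As both $H$ and $\tH_\R$ are induced subgraphs of $G$, it follows that $H$ is an induced subgraph of $\tH_\R$.

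For properness, the key point is that the members of $\C_{\l+1}'$ other than those in $\Z_\R$ survive in $\tH_\R$ but not in $H$. Concretely, by Claim \ref{H-nonempty} (or rather its proof) we know $\C_{\l+1}'$ is non-empty since we are under the standing assumption $\vert\C_{\l+1}'\vert>\tfrac14\vert\C_{\l+1}\vert$; moreover the colouring $\psi$ leaves at most $\tfrac14\vert\C_{\l+1}\vert < \vert\C_{\l+1}'\vert$ members of $\C_{\l+1}'$ uncoloured, so at least one member $D\in\C_{\l+1}'$ is coloured in $\psi$ but not red — that is, $D\notin\Z_\R$ (indeed, there are at least $\vert\C_{\l+1}'\vert - \tfrac14\vert\C_{\l+1}\vert \geq 1$ coloured members, and if all were red we would contradict, e.g., $\vert\Z_\R\vert$ being just one colour class; in any case one can pick $D\in\C_{\l+1}'\mi\Z_\R$). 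Such a $D$ satisfies $D\su V(G)$, $D\cap X_\R=\es$ (as $D$ is uncoloured in $\varphi$), and $D\cap(\bigcup_{D'\in\Z_\R}D')=\es$ (as members of $\C$ are disjoint and $D\notin\Z_\R$), so $D\su V(\tH_\R)$; but $D\su\C_{\l+1}$ gives $D\cap V(H)=\es$ by \eqref{H-def1}. Hence $V(H)\subsetneq V(\tH_\R)$, so $H$ is a \emph{proper} induced subgraph of $\tH_\R$.

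The only mild subtlety — and the one point worth stating carefully rather than the routine set-chasing — is ensuring the existence of some $D\in\C_{\l+1}'\mi\Z_\R$, i.e.\ that not every member of $\C_{\l+1}'$ got colour red. This is immediate because the colouring $\psi$ uses $401k\geq 2$ colours and at least $\vert\C_{\l+1}'\vert-\tfrac14\vert\C_{\l+1}\vert\geq 1$ members are coloured, so either some member is uncoloured (and $D$ is that one) or at least one coloured member exists and we may assume it is not red unless $\vert\C_{\l+1}'\vert=\vert\Z_\R\vert$, in which case we pick an uncoloured member; at any rate at least one vertex of $\C_{\l+1}'$ lies outside $\Z_\R$, since otherwise all popular and all red members together exhaust $\C_{\l+1}'$ while the popular ones number fewer than $\tfrac14\vert\C_{\l+1}'\vert$ — in fact the cleanest route is simply: $\C_{\l+1}'$ has a non-popular member (popular members number $<\tfrac14\vert\C_{\l+1}'\vert$), every non-popular member is coloured in $\psi$, and the non-popular members of any single $\C(s)$ get distinct colours, but that last refinement is not even needed — it suffices that $\vert\C_{\l+1}'\vert > \tfrac14\vert\C_{\l+1}\vert \geq \vert\Z_\R\vert$ cannot hold for every colour simultaneously, so we just fix one colour red with $\Z_\R \subsetneq \C_{\l+1}'$, or note directly that a non-popular member $D$ which does not receive colour red exists as long as $401k\geq 2$. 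Either way, $D\in\C_{\l+1}'\mi\Z_\R$ exists, completing the argument.
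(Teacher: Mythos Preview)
Your first paragraph (showing $V(H)\su V(\tH_\R)$ and hence that $H$ is an induced subgraph of $\tH_\R$) is correct and matches the paper's argument.

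The properness argument, however, has a genuine gap. You try to locate some $D\in\C_{\l+1}'\mi\Z_\R$, but you never actually establish that this set is non-empty. The colouring $\psi$ is constructed by a greedy algorithm that, for each non-popular $D$, may choose \emph{any} colour not among the at most $400k$ forbidden ones; nothing prevents the algorithm from choosing red every time. In particular, if $S'=\es$ (which is not excluded), there are no constraints at all, and there may also be no popular members (the bound only says there are \emph{fewer than} $\tfrac14\vert\C_{\l+1}\vert$ of them, which allows zero). In that scenario $\Z_\R=\C_{\l+1}'$ and your witness $D$ does not exist. Your various attempted fallbacks all either assume an uncoloured member exists, or assert without justification that some non-popular member avoids red, or try to ``choose'' red to be a colour with $\Z_\R\subsetneq\C_{\l+1}'$ --- but red is a fixed colour here, and the claim must hold for each of the $401k$ colours.

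The paper sidesteps this entirely by looking instead at $\C_1\cup\dots\cup\C_{J'}$. By condition~(v) in Definition~\ref{approcolour} every member of $\C_1\cup\dots\cup\C_{J'}$ is uncoloured in $\varphi$, hence disjoint from $X_\R$; and since $\Z_\R\su\C_{\l+1}$ and distinct members of $\C$ are disjoint, each such member is also disjoint from every element of $\Z_\R$. Thus every member of $\C_1\cup\dots\cup\C_{J'}$ lies in $V(\tH_\R)$, while by \eqref{H-def1} it is disjoint from $V(H)$. Since $\C_1\cup\dots\cup\C_{J'}$ is certainly non-empty (indeed $\C_1=\{D_1\}$), this gives $V(H)\subsetneq V(\tH_\R)$ without any appeal to the structure of $\psi$.
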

\begin{proof}
As $X_\R\su X_1\cup\dots\cup X_{401k}$ and $\Z_\R\su \C_{\l+1}'\su \C_{\l+1}$, it follows from (\ref{H-def1}), that $\tH_\R$ contains $H$ as an induced subgraph.

On the other hand, by condition (v) in Definition \ref{approcolour} the members of $\C_1\cup\dots\cup\C_{J'}$ are all completely uncoloured in $\varphi$ and therefore disjoint from $X_\R$. By $\Z_\R\su \C_{\l+1}'\su \C_{\l+1}$ (and since any two members of $\C$ are disjoint), the members of $\C_1\cup\dots\cup\C_{J'}$ are also disjoint from all members of $\Z_\R$. Hence all members of $\C_1\cup\dots\cup\C_{J'}$ are subsets of $V(\tH_\R)=G-X_\R-\Z_\R$. But by (\ref{H-def1}) they are all disjoint from $V(H)$. This establishes that $H$ must be a proper induced subgraph of $\tH_\R$.
\end{proof}

\begin{claim}\label{tH-red-prop1}
$V_{\leq k-1}(\tH_\R)\su V(H)$.
\end{claim}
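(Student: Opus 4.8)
The plan is to prove the contrapositive, vertex by vertex: I will show that any vertex $v\in V(\tH_\R)$ that does \emph{not} lie in $V(H)$ satisfies $\deg_{\tH_\R}(v)\geq k$, so that $V_{\leq k-1}(\tH_\R)$ can only contain vertices of $V(H)$. Since $\tH_\R=G-X_\R-\Z_\R$, such a $v$ is not in $X_\R$, so condition (iv) of Definition \ref{approcolour} applied to the colour red immediately gives $\deg_{G-X_\R}(v)\geq k$. Hence the whole task reduces to checking that deleting the members of $\Z_\R$ from $G-X_\R$ does not remove any neighbour of $v$, i.e.\ that $v$ is adjacent to no member of $\Z_\R$.

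For that, the key will be condition (vii) together with the structure of $\C^{*}$. Every $E\in\Z_\R$ lies in $\C_{\l+1}'\su\C_{\l+1}\su\C_{\l+1}\cup\dots\cup\C_J$ and is uncoloured in $\varphi$, so by (vii) all of its neighbours in $G$ are uncoloured in $\varphi$. Thus, if $v$ were adjacent to some $E\in\Z_\R$, then $v$ would be uncoloured in $\varphi$, hence $v\notin X_1\cup\dots\cup X_{401k}$; since by (\ref{H-def1}) the complement $V(G)\mi V(H)$ equals $(X_1\cup\dots\cup X_{401k})\cup\bigcup_{D\in\C_1\cup\dots\cup\C_{\l+1}}D$ and $v\notin V(H)$, this would force $v$ into some $D\in\C_1\cup\dots\cup\C_{\l+1}\su\C^{*}$. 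As $v\in V(\tH_\R)$, the vertex $v$ lies in no member of $\Z_\R$, so $D\neq E$; but then $D$ and $E$ would be two distinct members of $\C^{*}$ joined by an edge through $v$, contradicting the fact recorded right after Corollary \ref{coro-edges-good-set} that there are no edges between distinct members of $\C^{*}$.

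Having ruled that out, $\deg_{\tH_\R}(v)=\deg_{G-X_\R}(v)\geq k$, and the argument is complete. I do not expect a genuine obstacle here: the proof is essentially bookkeeping. The only point needing care is keeping straight which collections of sets are deleted when passing from $G$ to $H$ versus from $G$ to $\tH_\R$, and making sure condition (vii) is invoked for a collection it actually covers — it is stated for members of $\C_{\l+1}\cup\dots\cup\C_J$, and $\Z_\R\su\C_{\l+1}'\su\C_{\l+1}$, so this is fine.
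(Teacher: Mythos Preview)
Your proof is correct and follows essentially the same approach as the paper's: both use condition (iv) to see that any low-degree vertex of $\tH_\R$ must be a neighbour of some member of $\Z_\R$, and then use condition (vii) together with the absence of edges between distinct members of $\C^{*}$ to conclude that such neighbours lie in $V(H)$. The only cosmetic difference is that you phrase it as the contrapositive, fixing $v\notin V(H)$ and showing $\deg_{\tH_\R}(v)\geq k$, whereas the paper starts from a low-degree vertex and traces where it must live.
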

\begin{proof} By condition (iv) in Definition \ref{approcolour} the graph $G-X_\R$ has minimum degree at least $k$. So all vertices of $\tH_\R=G-X_\R-\Z_\R$ with degree at most $k-1$ are neighbours of members of $\Z_\R$ in $G$. But the members of $\Z_\R$ do not have any edges to members of $\C\mi \Z_\R$ (as there are no edges between different members of $\C$). Also, every $D\in \Z_\R\su \C_{\l+1}'$ is uncoloured in $\varphi$ and by condition (vii) in Definition \ref{approcolour} this implies that all neighbours of $D$ in $G$ are also uncoloured in $\varphi$. Thus, $D$ does not have any neighbours in $X_1\cup\dots\cup X_{401k}$. Thus, all neighbours of members of $\Z_\R$ in $G$ lie either in $\Z_\R$ itself or in $H=G-(\C_1\cup\dots\cup\C_{\l+1})-(X_1\cup\dots\cup X_{401k})$. Thus, all vertices of $\tH_\R=G-X_\R-\Z_\R$ with degree at most $k-1$ are in $V(H)$.
\end{proof}

\begin{claim}\label{tH-red-prop2}
$V_{\leq k-1}(\tH_\R)\su V_{\leq k-1}(H)\mi S$.
\end{claim}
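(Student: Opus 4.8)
The plan is to treat the two halves of the claim separately. The inclusion $V_{\le k-1}(\tH_\R)\su V_{\le k-1}(H)$ is immediate: by Claim \ref{tH-red-prop1} any $v\in V_{\le k-1}(\tH_\R)$ lies in $V(H)$, and since $H$ is an induced subgraph of $\tH_\R$ we get $\deg_H(v)\le\deg_{\tH_\R}(v)\le k-1$. The real content is to show that no vertex of $S$ has degree at most $k-1$ in $\tH_\R$, which I would prove by contradiction. Suppose $v\in S$ with $\deg_{\tH_\R}(v)\le k-1$. Then $v\in V(H)$ (as $S\su V_{\le k-1}(H)$), and $v$ is uncoloured, so condition (iv) in Definition \ref{approcolour} gives $\deg_{G-X_\R}(v)\ge k$; since $\tH_\R=(G-X_\R)-\Z_\R$ and $\deg_{\tH_\R}(v)<\deg_{G-X_\R}(v)$, the vertex $v$ must be adjacent to some member of $\Z_\R\su\C_{\l+1}'$. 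Hence $v\in S'$, so $\C(v)$ and $L(v)$ are defined; write $M(v)$ for the set of members of $\C_{\l+1}'$ adjacent to $v$, so $\C(v)\su M(v)$ and $\Z_\R\cap M(v)\neq\es$.

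I would then isolate two basic observations. First, if $v$ had more than $k$ distinct colours among its neighbours in $\varphi$, it would have at least $k$ neighbours coloured in colours other than red, and all of these survive in $\tH_\R$ (they are not red, and the members of $\Z_\R$ are uncoloured in $\varphi$), giving $\deg_{\tH_\R}(v)\ge k$, a contradiction; thus $L(v)$ contains \emph{all} the colours appearing on neighbours of $v$. Second, the members of $M(v)$ are pairwise disjoint, have no edges among themselves, are uncoloured in $\varphi$, and are disjoint from $V(H)$, so $V(H)$ together with the vertices of the members of $M(v)\mi\Z_\R$ all lie in $V(\tH_\R)$; this yields $\deg_{\tH_\R}(v)\ge\deg_H(v)+|M(v)|-|\Z_\R\cap M(v)|$. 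Now split on the size of $M(v)$. If $|M(v)|\ge k+1-\deg_H(v)$, then $|\C(v)|=k+1-\deg_H(v)$; since in $\psi$ the non-popular members of $\C(v)$ get pairwise distinct colours and the popular ones stay uncoloured, at most one member of $\C(v)$ is red, so $|\Z_\R\cap M(v)|\le 1+|M(v)|-(k+1-\deg_H(v))$, and substituting into the previous inequality gives $\deg_{\tH_\R}(v)\ge k$, a contradiction.

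The hard case, and the main obstacle, is $|M(v)|<k+1-\deg_H(v)$, where $\C(v)=M(v)$ and the bound above only gives $\deg_{\tH_\R}(v)\ge k-1$. Here I would argue as follows: again at most one member of $M(v)$ is red, and since $\Z_\R\cap M(v)\neq\es$ this forces $\Z_\R\cap M(v)=\{D\}$ for a single good set $D$, which is therefore the unique member of $\Z_\R$ adjacent to $v$, lies in $\C(v)$, and is coloured red in $\psi$. Because $\psi$ assigns no member of $\C(v)$ a colour on $L(v)$, the colour red is not in $L(v)$, and by the first observation above this means $v$ has no red neighbour, i.e.\ $N_G(v)\cap X_\R=\es$. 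Therefore $\deg_{\tH_\R}(v)=\deg_G(v)-|N_G(v)\cap D|=\deg_{G-D}(v)\le k-1$. Since $v\in V(H)$ and $D\cap V(H)=\es$, we have $v\notin D$, so by the second rule in Definition \ref{def-good-set} the set $D\cup\{v\}$ is a good set — contradicting the maximality of $D\in\C^{*}$. The delicate point throughout is to balance the list $L(v)$, which limits the red neighbours of $v$, against the size constraint on $\C(v)$, which limits how many members of $\Z_\R$ can touch $v$, so that in every configuration either the surplus-degree estimate or the good-set maximality argument applies.
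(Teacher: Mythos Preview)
Your proof is correct and follows essentially the same line of argument as the paper. The minor differences are organizational: you dispose of the case ``more than $k$ colours among neighbours'' upfront (concluding that $L(v)$ records \emph{all} colours), whereas the paper folds this into the $|L(s)|=k$ case after establishing $\R\notin L(s)$; and at the end you inline the maximality-of-$D$ argument rather than citing Claim~\ref{remove-good-set} (whose proof is exactly that inlined step). The case split on $|\C(v)|$ and the use of $L(v)$ to rule out red neighbours in the short case are identical in spirit to the paper.
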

\begin{proof} By Claim \ref{tH-red-prop1} we already know $V_{\leq k-1}(\tH_\R)\su V(H)$. Since $\tH_\R$ contains $H$ as an induced subgraph by Claim \ref{tH-red-prop0}, this implies $V_{\leq k-1}(\tH_\R)\su V_{\leq k-1}(H)$. So it remains to show that every vertex $s\in S\su V_{\leq k-1}(H)$ has degree at least $k$ in $\tH_\R$.

If $s$ is not adjacent to any member of $\Z_\R$, then the degree of $s$ in $\tH_\R=G-X_\R-\Z_\R$ is equal to the degree of $s$ in $G-X_\R$, which is at least $k$ by condition (iv) in Definition \ref{approcolour}.

If $s\not\in S'$, then $s$ is not adjacent to any member of $\C_{\l+1}'$ and in particular not to any member of $\Z_\R$, so we are done by the previous observation.

So let us now assume that $s\in S'$ and that $s$ is adjacent to some member $D$ of $\Z_\R$.

If $\vert \C(s)\vert=k+1-\deg_H(s)$, then at least $k-\deg_H(s)$ members of $\C(s)$ are not red in $\psi$ (since at most one member of $\C(s)$ is red). Since $\C(s)\su \C_{\l+1}'$ all of its members are disjoint from $X_1\cup\dots\cup X_{401k}$ and in particular from $X_\R$. Thus, all of the at least $k-\deg_H(s)$ non-red members of $\C(s)\su \C_{\l+1}'$ are present in $\tH_\R=G-X_\R-\Z_\R$ and they are all neighbours of $s$. Furthermore, $s$ has $\deg_H(s)$ additional neighbours in $V(H)\su V(\tH_\R)$, so in total $s$ has degree at least $(k-\deg_H(s))+\deg_H(s)=k$ in $\tH_\R$.

Thus, we can assume $\vert \C(s)\vert<k+1-\deg_H(s)$. By definition of $\C(s)$, this means that $\C(s)$ contains all the neighbours of $s$ in $\C_{\l+1}'$. Also recall our assumption that $s$ is adjacent to some member $D$ of $\Z_\R\su \C_{\l+1}'$. Then $D\in \C(s)$ and $D$ is coloured red in $\psi$. By the properties of $\psi$, the colour red is not on the list $L(s)$ and $D$ is the only red element in $\C(s)$. Thus, $D$ is the only neighbour of $s$ in $\Z_\R$.

If $\vert L(s)\vert=k$, then $s$ has neighbours in $G$ in $k$ different non-red colours. All these vertices lie in $\tH_\R=G-X_\R-\Z_\R$. Thus, $s$ has degree at least $k$ in $\tH_\R$.

So we may assume $\vert L(s)\vert<k$. Then from the definition of $L(s)$ we obtain that $L(s)$ contains all colours of the neighbours of $s$ in $G$. Since red is not on the list $L(s)$, we can conclude that $s$ has no neighbours in $G$ inside the set $X_\R$. Also, recall that $D$ is the only neighbour of $s$ in $\Z_\R$. Hence the degree of $s$ in $\tH_\R=G-X_\R-\Z_\R$ is the same as in $G-D$, which is at least $k$ by Claim \ref{remove-good-set}. This finishes the proof of Claim \ref{tH-red-prop2}.
\end{proof}

\begin{claim}\label{tH-red-prop4}
If $v\in V_{\leq k-1}(\tH_\R)$, then $v\in V_{k-1}(H)\mi S$, $v$ has at least one neighbour in $\C_{\l+1}'$ and all neighbours of $v$ in $\C_{\l+1}'$ are red in $\psi$.
\end{claim}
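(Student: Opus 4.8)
The plan is to run a degree count across the graphs $H\su\tH_\R$ and $G-X_\R$. First I would note that the first assertion is essentially already available: Claim \ref{tH-red-prop2} gives $v\in V_{\leq k-1}(H)\mi S$, and since Lemma \ref{mainlem} guarantees $V_{\leq k-2}(H)\su S$, this forces $v\in V_{k-1}(H)\mi S$, so in particular $\deg_H(v)=k-1$.

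Next I would extract the key set containment. Since $v\in V(H)$, the vertex $v$ is uncoloured in $\varphi$, and by condition (iv) in Definition \ref{approcolour} the graph $G-X_\R$ has minimum degree at least $k$, so $\deg_{G-X_\R}(v)\geq k$. On the other hand, $\tH_\R=(G-X_\R)-\Z_\R$ contains $H$ as an induced subgraph by Claim \ref{tH-red-prop0}, so writing $N_G(v)$ for the set of neighbours of $v$ in $G$ we have $\deg_{\tH_\R}(v)=\deg_H(v)+\vert N_G(v)\cap(V(\tH_\R)\mi V(H))\vert$. Since $v\in V_{\leq k-1}(\tH_\R)$ we get $\deg_{\tH_\R}(v)\leq k-1=\deg_H(v)$, hence $N_G(v)\cap(V(\tH_\R)\mi V(H))=\es$. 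As $V(H)\su V(\tH_\R)$, this rearranges to
$$N_G(v)\cap(V(G)\mi V(H))\su V(G)\mi V(\tH_\R)=X_\R\cup\bigcup_{D\in\Z_\R}D,$$
i.e.\ every neighbour of $v$ lying outside $V(H)$ is in $X_\R$ or in (the vertex set of) a red member of $\C_{\l+1}'$.

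From this the remaining assertions follow by set bookkeeping. Every member of $\C_{\l+1}'$ is uncoloured in $\varphi$, hence disjoint from $X_\R$, and by (\ref{H-def1}) the union $\bigcup_{D\in\C_{\l+1}'}D$ is disjoint from $V(H)$ (because $\C_{\l+1}'\su\C_{\l+1}$); intersecting the displayed containment with $\bigcup_{D\in\C_{\l+1}'}D$ therefore yields $N_G(v)\cap\bigcup_{D\in\C_{\l+1}'}D\su\bigcup_{D\in\Z_\R}D$. Hence if $D\in\C_{\l+1}'$ is adjacent to $v$, a neighbour of $v$ inside $D$ lies in some member $D'\in\Z_\R$, and since distinct members of $\C$ are disjoint we get $D=D'\in\Z_\R$, i.e.\ $D$ is red in $\psi$. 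Finally, $\deg_{\tH_\R}(v)=\deg_{G-X_\R}(v)-\vert N_G(v)\cap\bigcup_{D\in\Z_\R}D\vert$ together with $\deg_{G-X_\R}(v)\geq k>k-1\geq\deg_{\tH_\R}(v)$ shows $v$ has at least one neighbour in $\bigcup_{D\in\Z_\R}D$, and so at least one neighbour in $\C_{\l+1}'$.

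I expect the only real obstacle to be keeping the three graphs $H$, $\tH_\R$ and $G-X_\R$ straight: one must use that passing from $H$ to $\tH_\R$ only \emph{adds} vertices while passing from $G-X_\R$ to $\tH_\R$ only \emph{deletes} the vertices in $\bigcup_{D\in\Z_\R}D$, and then combine these observations with the two degree inequalities in exactly the right way. Everything else is routine manipulation of the defining containments for $H$, $\tH_\R$, $\C_{\l+1}'$ and $\Z_\R$, together with the disjointness of the members of $\C$.
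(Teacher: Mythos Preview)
Your proof is correct and follows essentially the same approach as the paper's: both deduce $v\in V_{k-1}(H)\mi S$ from Claim \ref{tH-red-prop2} together with $V_{\leq k-2}(H)\su S$, compare $\deg_H(v)$ with $\deg_{\tH_\R}(v)$ to conclude $v$ has no neighbours in $V(\tH_\R)\mi V(H)$, use condition (iv) to force a neighbour of $v$ in $\Z_\R$, and observe that the members of $\C_{\l+1}'\mi\Z_\R$ lie in $V(\tH_\R)\mi V(H)$. The only difference is cosmetic: you phrase the last two points via explicit set containments rather than in words.
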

\begin{proof}
By Claim \ref{tH-red-prop2} we know that $v\in V_{\leq k-1}(H)\mi S$. Since $V_{\leq k-2}(H)\su S$, we have $V_{\leq k-1}(H)\mi S=V_{k-1}(H)\mi S$. Thus, $v\in V_{k-1}(H)\mi S$. So $v$ has degree $k-1$ in $H$. Since $v$ by assumption has degree at most $k-1$ in $\tH_\R$ and $\tH_\R$ contains $H$ as an induced subgraph, we can conclude that $v$ also has degree $k-1$ in $\tH_\R$ and it does not have any neighbours in $V(\tH_\R)\mi V(H)$.

Since $v\in \tH_\R=G-X_\R-\Z_\R$ we have $v\not\in X_\R$ and by condition (iv) in Definition \ref{approcolour} we know that $v$ has degree at least $k$ in $G-X_\R$. But since $v$ has degree $k-1$ in $\tH_\R=G-X_\R-\Z_\R$, this implies that $v$ has at least one neighbour in $\Z_\R\su \C_{\l+1}'$.

Note that all members of $\C_{\l+1}'\mi \Z_\R$ are subsets of $V(\tH_\R)\mi V(H)$: Each $D\in \C_{\l+1}'\mi \Z_\R$ is disjoint from $X_\R$ (because it is uncoloured in $\varphi$) and is therefore a subset of $V(\tH_\R)$, but by (\ref{H-def2}) it is clearly disjoint from $V(H)$. By the argument above, $v$ does not have any neighbours in $V(\tH_\R)\mi V(H)$. Hence $v$ does not have any neighbours in $\C_{\l+1}'\mi \Z_\R$ and therefore all the neighbours of $v$ in $\C_{\l+1}'$ belong to $\Z_\R$, i.e.\ they are red in $\psi$.
\end{proof}

\begin{claim}\label{tH-red-prop3}
No vertex in $V(\tH_\R)\mi V(H)$ is adjacent to any member of $\C_H$.
\end{claim}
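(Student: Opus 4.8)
The plan is to reduce the claim to a fact that was already essentially established when we checked that $H$ satisfies the hypotheses of Lemma \ref{mainlem}: namely, that every $G$-neighbour of any member of $\C_H$ already lies in $V(H)$. Indeed, once this is known, the claim follows at once, since $\tH_\R=G-X_\R-\Z_\R$ is an induced subgraph of $G$, so $V(\tH_\R)\mi V(H)\su V(G)\mi V(H)$, and adjacency in $\tH_\R$ agrees with adjacency in $G$ for vertices of $\tH_\R$.

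So first I would recall that $\C_H\su\C_{\l+2}\cup\dots\cup\C_J$, hence in particular $\C_H\su\C_{\l+1}\cup\dots\cup\C_J$, and that every $D\in\C_H$ is disjoint from $X_1\cup\dots\cup X_{401k}$ (because $D\su V(H)$) and therefore uncoloured in $\varphi$; moreover, $D$ has no edges to any other member of $\C$. Then, for a fixed $D\in\C_H$ and any neighbour $w\in V(G)$ of $D$: condition (vii) in Definition \ref{approcolour} is applicable to $D$ (as $D\in\C_{\l+1}\cup\dots\cup\C_J$ is uncoloured), so $w$ is uncoloured in $\varphi$, i.e.\ $w\notin X_1\cup\dots\cup X_{401k}$; and since $D$ has no edges to the other members of $\C$, the vertex $w$ does not lie in any member of $\C_1\cup\dots\cup\C_{\l+1}$. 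Comparing with the description (\ref{H-def1}) of $H$, this forces $w\in V(H)$.

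To finish, I would take any $v\in V(\tH_\R)\mi V(H)$; then $v\in V(G)\mi V(H)$, so by the previous paragraph $v$ cannot be a neighbour (in $G$, equivalently in $\tH_\R$) of any $D\in\C_H$, which is exactly the assertion of Claim \ref{tH-red-prop3}. I do not expect a genuine obstacle here: the argument is a direct repackaging of the reasoning used earlier to show that $\deg_H(v)=\deg_G(v)\geq k$ for each $v$ in a member of $\C_H$. The only point that needs a little care is verifying that condition (vii) really does apply to the elements of $\C_H$, i.e.\ confirming the inclusion $\C_H\su\C_{\l+1}\cup\dots\cup\C_J$, which is immediate from $\C_H\su\C_{\l+2}\cup\dots\cup\C_J$.
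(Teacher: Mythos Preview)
Your proof is correct and follows essentially the same approach as the paper: you show that every $G$-neighbour of any $D\in\C_H$ already lies in $V(H)$ by combining condition (vii) of Definition \ref{approcolour} (applicable since $D\in\C_{\l+2}\cup\dots\cup\C_J$ is uncoloured) with the fact that there are no edges between distinct members of $\C$, and then conclude via $V(\tH_\R)\mi V(H)\su V(G)\mi V(H)$. This matches the paper's argument almost verbatim.
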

\begin{proof}
Let $D\in \C_H\su \C_{\l+2}\cup\dots\cup\C_{J}$, then $D$ does not have any edges to members of $\C_1\cup\dots\cup\C_{\l+1}$. Furthermore by (\ref{H-def1}) we know that $D\su V(H)$ is disjoint from $X_1\cup\dots\cup X_{401k}$, which means that $D$ is uncoloured in $\varphi$. So by condition (vii) in Definition \ref{approcolour} all neighbours of $D$ in $G$ are also uncoloured in $\varphi$. Hence $D$ has no neighbours in $X_1\cup\dots\cup X_{401k}$. Thus,
$$H=G-(\C_1\cup\dots\cup\C_{\l+1})-(X_1\cup\dots\cup X_{401k})$$
contains all neighbours of $D$ in $G$ and in particular all neighbours of $D$ in $V(\tH_\R)$. Hence no vertex in $V(\tH_\R)\mi V(H)$ is adjacent to $D$.
\end{proof}

By Claim \ref{tH-red-prop0}, Claim \ref{tH-red-prop2} and Claim \ref{tH-red-prop3} the graph $\tH_\R$ satisfies all properties to act as $\tH$ in Lemma \ref{mainlem}. So by the conclusion of the lemma, there is an induced subgraph $\tH_\R'$ of $\tH_\R$ with all the properties listed in Lemma \ref{mainlem}.

Set
\begin{equation}\label{Xstrich-red}
X_\R'=V(\tH_\R)\mi V(\tH_\R').
\end{equation}
This is the set of vertices we want to colour red in addition to the red vertices in $\varphi$ and the red members of $\C_{\l+1}'$ in $\psi$. But first, we need to establish some properties of the set $X_\R'$.

By property (c) in Lemma \ref{mainlem} we have
\begin{equation}\label{Xstrich-red2}
X_\R'=V(\tH_\R)\mi V(\tH_\R')\su \bigcup_{v\in V_{\leq k-1}(\tH_\R)}B_v\su V(H).
\end{equation}

\begin{claim}\label{Xstrich-red6}
For each $D\in \C$ we either have $D\su X_\R'$ or $D\cap X_\R'=\es$.
\end{claim}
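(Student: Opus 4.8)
The plan is to run a short case analysis on which of the subcollections $\C_1,\dots,\C_J$ contains $D$, separating the members of $\C$ that are disjoint from $V(H)$ from those contained in $V(H)$ (the latter being exactly the members of $\C_H$), and then to invoke property (e) of Lemma \ref{mainlem} for the only nontrivial case.

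First I would handle $D\in\C_1\cup\dots\cup\C_{\l+1}$. By (\ref{H-def1}) such a $D$ is disjoint from $V(H)$, and since $X_\R'\su V(H)$ by (\ref{Xstrich-red2}), this immediately gives $D\cap X_\R'=\es$, which is stronger than what we need.

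Next, for $D\in\C_{\l+2}\cup\dots\cup\C_J$, I would apply Claim \ref{H-propC}: either $D\cap V(H)=\es$ or $D\su V(H)$. In the first sub-case $D\cap X_\R'=\es$ again follows from $X_\R'\su V(H)$. In the second sub-case $D\in\C_H$, and here is the one genuine (though still very short) step. The graph $\tH_\R$ satisfies the hypotheses of Lemma \ref{mainlem} (as verified in Claims \ref{tH-red-prop0}, \ref{tH-red-prop2}, \ref{tH-red-prop3}), and $\tH_\R'$ is the subgraph produced by that lemma, so property (e) of Lemma \ref{mainlem} tells us that either $D\su V(\tH_\R')$ or $D\cap V(\tH_\R')=\es$. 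If $D\su V(\tH_\R')$ then $D\cap X_\R'=D\cap(V(\tH_\R)\mi V(\tH_\R'))=\es$. If instead $D\cap V(\tH_\R')=\es$, then using that $\tH_\R$ contains $H$ as an induced subgraph (Claim \ref{tH-red-prop0}) we have $D\su V(H)\su V(\tH_\R)$, and therefore $D\su V(\tH_\R)\mi V(\tH_\R')=X_\R'$.

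Since $\C=\C_1\cup\dots\cup\C_J$, these cases cover every $D\in\C$, which proves the claim. I do not expect a real obstacle: the statement is essentially bookkeeping built on the identity $X_\R'=V(\tH_\R)\mi V(\tH_\R')$, the inclusion $X_\R'\su V(H)$, and property (e) of Lemma \ref{mainlem}. The only point needing a moment's care is the sub-case $D\cap V(\tH_\R')=\es$, where one must note that $D$ actually lies in $V(\tH_\R)$ (not merely outside $V(\tH_\R')$) in order to conclude $D\su X_\R'$ rather than only $D\cap X_\R'=\es$.
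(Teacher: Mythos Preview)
Your proof is correct and follows essentially the same approach as the paper: use $X_\R'\su V(H)$ to dispose of the sets $D$ disjoint from $V(H)$, and invoke property (e) of Lemma \ref{mainlem} for $D\in\C_H$, being careful that $D\su V(H)\su V(\tH_\R)$ so that $D\cap V(\tH_\R')=\es$ forces $D\su X_\R'$. The only difference is cosmetic: your initial separation of $D\in\C_1\cup\dots\cup\C_{\l+1}$ is redundant, since Claim \ref{H-propC} already applies to every $D\in\C$ and handles that case as part of the $D\cap V(H)=\es$ branch.
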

\begin{proof} By Claim \ref{H-propC} we have $D\su V(H)$ or $D\cap V(H)=\es$. In the latter case we have $D\cap X_\R'=\es$ by (\ref{Xstrich-red2}). In the former case we have $D\in \C_H$ and hence property (e) in Lemma \ref{mainlem} gives $D\su V(\tH_\R')$ or $D\cap V(\tH_\R')=\es$. Since $D\su V(H)$ implies $D\su V(\tH_\R)$, we obtain $D\cap (V(\tH_\R)\mi V(\tH_\R'))=\es$ or $D\su (V(\tH_\R)\mi V(\tH_\R'))$. This proves the claim as $X_\R'=V(\tH_\R)\mi V(\tH_\R')$.
\end{proof}

\begin{claim}\label{Xstrich-red5}
$\e_{G-X_\R-\Z_\R}(X_\R')\leq (k-1)\vert X_\R'\vert$.
\end{claim}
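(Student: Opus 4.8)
The statement is essentially a direct consequence of property (b) in Lemma \ref{mainlem}, once we bookkeep how the quantities $v$ and $e$ change when passing from $\tH_\R$ to $\tH_\R'$. First I would recall that $\tH_\R'$ is an induced subgraph of $\tH_\R$ and that, by definition \eqref{Xstrich-red}, its vertex set is $V(\tH_\R')=V(\tH_\R)\mi X_\R'$ with $X_\R'\su V(\tH_\R)$. Hence $\tH_\R'$ is exactly the graph obtained from $\tH_\R$ by deleting the vertices of $X_\R'$, i.e.\ $\tH_\R'=\tH_\R-X_\R'$. This gives the two identities $v(\tH_\R')=v(\tH_\R)-\vert X_\R'\vert$ and, by the very definition of the notation $\e$, $e(\tH_\R')=e(\tH_\R)-\e_{\tH_\R}(X_\R')$.

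Next I would plug these two identities into the inequality from property (b) of Lemma \ref{mainlem}, namely $(k-1)v(\tH_\R')-e(\tH_\R')\leq (k-1)v(\tH_\R)-e(\tH_\R)$. After substituting, the terms $(k-1)v(\tH_\R)$ and $-e(\tH_\R)$ cancel on both sides, and one is left with $-(k-1)\vert X_\R'\vert+\e_{\tH_\R}(X_\R')\leq 0$, that is, $\e_{\tH_\R}(X_\R')\leq (k-1)\vert X_\R'\vert$. Finally, since $\tH_\R=G-X_\R-\Z_\R$ by \eqref{def-tH-red}, the quantity $\e_{\tH_\R}(X_\R')$ is literally $\e_{G-X_\R-\Z_\R}(X_\R')$, so this is exactly the claimed bound.

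I do not expect any real obstacle here: the only point requiring a word of care is that $X_\R'$ is indeed a subset of $V(\tH_\R)$ (so that $\tH_\R-X_\R'$ makes sense and coincides with $\tH_\R'$), which is immediate from \eqref{Xstrich-red}; the rest is a one-line algebraic rearrangement of property (b). (The degenerate case $X_\R'=\es$ is covered as well, the inequality then reading $0\leq 0$.)
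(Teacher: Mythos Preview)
Your proposal is correct and follows essentially the same approach as the paper: both derive the inequality directly from property (b) of Lemma \ref{mainlem} by identifying $v(\tH_\R)-v(\tH_\R')=\vert X_\R'\vert$ and $e(\tH_\R)-e(\tH_\R')=\e_{\tH_\R}(X_\R')$, and then using $\tH_\R=G-X_\R-\Z_\R$.
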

\begin{proof} By property (b) in Lemma \ref{mainlem} we have
$$(k-1)v(\tH_\R')-e(\tH_\R')\leq (k-1)v(\tH_\R)-e(\tH_\R),$$
that is
$$e(\tH_\R)-e(\tH_\R')\leq (k-1)(v(\tH_\R)-v(\tH_\R')).$$
So recalling $\tH_\R=G-X_\R-\Z_\R$ and $X_\R'=V(\tH_\R)\mi V(\tH_\R')$ we obtain
$$\e_{G-X_\R-\Z_\R}(X_\R')=\e_{\tH_\R}(X_\R')=e(\tH_\R)-e(\tH_\R')\leq (k-1)(v(\tH_\R)-v(\tH_\R'))=(k-1)\vert X_\R'\vert,$$
which concludes the proof of the claim.
\end{proof}

\begin{claim}\label{Xstrich-red7}
The graph $G-X_\R-\Z_\R-X_\R'$ has minimum degree at least $k$.
\end{claim}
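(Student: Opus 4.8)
The plan is to recognize that the graph $G-X_\R-\Z_\R-X_\R'$ is not really a new object at all: it coincides with the induced subgraph $\tH_\R'$ produced by Lemma \ref{mainlem}, and then the claim is immediate from property (a) of that lemma. So the only work is the bookkeeping that identifies the two graphs.

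Concretely, I would argue as follows. By \eqref{def-tH-red} the graph $\tH_\R=G-X_\R-\Z_\R$ is an induced subgraph of $G$, with vertex set $V(G)\mi\big(X_\R\cup\bigcup_{D\in\Z_\R}D\big)$. By the conclusion of Lemma \ref{mainlem}, $\tH_\R'$ is an induced subgraph of $\tH_\R$, and by \eqref{Xstrich-red} we have $X_\R'=V(\tH_\R)\mi V(\tH_\R')$, so $V(\tH_\R')=V(\tH_\R)\mi X_\R'$. Since an induced subgraph of an induced subgraph is again an induced subgraph (determined by its vertex set), $\tH_\R'$ is the induced subgraph of $G$ on the vertex set $V(G)\mi\big(X_\R\cup\bigcup_{D\in\Z_\R}D\cup X_\R'\big)$. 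But this is precisely the vertex set of $G-X_\R-\Z_\R-X_\R'$, and that graph is also induced in $G$; hence $\tH_\R'$ and $G-X_\R-\Z_\R-X_\R'$ are literally the same graph.

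It then remains only to apply property (a) in Lemma \ref{mainlem}, which asserts that $\tH_\R'$ has minimum degree at least $k$; since $\tH_\R'=G-X_\R-\Z_\R-X_\R'$, the claim follows. I do not expect any genuine obstacle here — the single point to be careful about is the vertex-set bookkeeping showing that the successive deletions $X_\R$, $\Z_\R$, $X_\R'$ reproduce exactly $V(\tH_\R')$, which is routine given how $\tH_\R$ and $X_\R'$ were defined.
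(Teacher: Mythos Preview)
Your proposal is correct and follows exactly the same approach as the paper: identify $G-X_\R-\Z_\R-X_\R'$ with $\tH_\R'$ via \eqref{def-tH-red} and \eqref{Xstrich-red}, then invoke property (a) of Lemma \ref{mainlem}. The paper's proof is simply a more condensed version of your bookkeeping.
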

\begin{proof} By (\ref{def-tH-red}) and (\ref{Xstrich-red}) we have
$$G-X_\R-\Z_\R-X_\R'=\tH_\R-X_\R'=\tH_\R'$$
and by property (a) in Lemma \ref{mainlem} this is a (non-empty) graph with minimum degree at least $k$.
\end{proof}

\begin{claim}\label{Xstrich-red8}
We have
$$X_\R'\su \bigcup_{v}B_v,$$
where the union is taken over all $v\in V_{k-1}(H)\mi S$, which have at least one neighbour in $\C_{\l+1}'$ and such that all neighbours of $v$ in $\C_{\l+1}'$ are red in $\psi$.
\end{claim}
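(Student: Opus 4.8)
The plan is to combine the containment established in (\ref{Xstrich-red2}) with the characterization of the set $V_{\leq k-1}(\tH_\R)$ provided by Claim \ref{tH-red-prop4}. Recall that (\ref{Xstrich-red2}) already gives
$$X_\R'\su \bigcup_{v\in V_{\leq k-1}(\tH_\R)}B_v,$$
so it suffices to check that every index $v$ occurring in this union is one of the vertices over which the union in the statement of the claim is taken. Once we know this, the index set of the union on the right-hand side of (\ref{Xstrich-red2}) is contained in the index set of the union in the claim, hence the latter union contains the former, and combining with (\ref{Xstrich-red2}) finishes the proof.

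This remaining point is precisely the content of Claim \ref{tH-red-prop4}: for each $v\in V_{\leq k-1}(\tH_\R)$ we have $v\in V_{k-1}(H)\mi S$, the vertex $v$ has at least one neighbour in $\C_{\l+1}'$, and all neighbours of $v$ in $\C_{\l+1}'$ are red in $\psi$. These three properties are exactly the conditions describing the vertices $v$ indexing the union in the claim, so $V_{\leq k-1}(\tH_\R)$ is contained in that set of vertices. Therefore $\bigcup_{v\in V_{\leq k-1}(\tH_\R)}B_v\su \bigcup_v B_v$, where the latter union ranges over all $v\in V_{k-1}(H)\mi S$ with at least one neighbour in $\C_{\l+1}'$, all red in $\psi$, and (\ref{Xstrich-red2}) then yields $X_\R'\su\bigcup_v B_v$ as claimed. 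There is essentially no obstacle here: the real work has already been carried out in Claim \ref{tH-red-prop4}, and this claim merely repackages that information together with (\ref{Xstrich-red2}) into the precise form that will be used later when verifying that the extended colouring is $(\l+1)$-appropriate.
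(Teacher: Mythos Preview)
Your proof is correct and follows exactly the same approach as the paper, which simply states that the claim is a direct consequence of (\ref{Xstrich-red2}) and Claim \ref{tH-red-prop4}. You have merely spelled out in more detail why combining these two facts yields the result.
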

\begin{proof} This is a direct consequence of (\ref{Xstrich-red2}) and Claim \ref{tH-red-prop4}.
\end{proof}

\begin{claim}\label{Xstrich-red9}
The set $X_\R'$ is disjoint from all members of $\C_1\cup\dots\cup\C_{J'}$.
\end{claim}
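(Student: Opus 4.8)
The plan is to deduce this immediately from the containment $X_\R'\su V(H)$ established in (\ref{Xstrich-red2}), together with the fact that all members of $\C_1\cup\dots\cup\C_{J'}$ were removed when forming $H$. This is a short bookkeeping argument rather than anything substantial.

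First I would recall that in the current step of the induction proving Lemma \ref{colour} we have $J'\leq \l\leq J-1$, and hence the collection $\C_1\cup\dots\cup\C_{J'}$ is a subcollection of $\C_1\cup\dots\cup\C_{\l+1}$. By the defining equation (\ref{H-def1}), namely $H=G-(\C_1\cup\dots\cup\C_{\l+1})-(X_1\cup\dots\cup X_{401k})$, every set $D\in\C_1\cup\dots\cup\C_{J'}$ is among the sets deleted from $G$ to obtain $H$, so $D\cap V(H)=\es$. (This disjointness was also used, for instance, in the proof of Claim \ref{tH-red-prop0}.)

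Next I would invoke (\ref{Xstrich-red2}), which gives $X_\R'=V(\tH_\R)\mi V(\tH_\R')\su\bigcup_{v\in V_{\leq k-1}(\tH_\R)}B_v\su V(H)$, so in particular $X_\R'\su V(H)$. Combining this with the previous paragraph, for every $D\in\C_1\cup\dots\cup\C_{J'}$ we get $D\cap X_\R'\su D\cap V(H)=\es$, which is exactly the assertion of the claim. I do not anticipate any genuine obstacle; the only point requiring a moment of care is the index range, i.e.\ that $J'\leq \l+1$ so that $\C_1\cup\dots\cup\C_{J'}\su\C_1\cup\dots\cup\C_{\l+1}$, which is guaranteed by the standing assumption $J'\leq \l\leq J-1$ of the induction step.
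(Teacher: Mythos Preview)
Your proof is correct and is essentially the same as the paper's: the paper also invokes (\ref{Xstrich-red2}) for $X_\R'\su V(H)$ and then notes that the members of $\C_1\cup\dots\cup\C_{J'}$ are disjoint from $V(H)$, citing (\ref{H-def2}) rather than (\ref{H-def1}), but this is an immaterial difference.
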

\begin{proof} By (\ref{Xstrich-red2}) we have $X_\R'\su V(H)$, but by (\ref{H-def2}) all members of $\C_1\cup\dots\cup\C_{J'}$ are disjoint from $V(H)$.
\end{proof}

\begin{claim}\label{Xstrich-red10}
If $D\in \C_H$ and $D\cap X_\R'=\es$, then $D$ has no neighbours in $X_\R'$.
\end{claim}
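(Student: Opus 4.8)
The plan is to derive this immediately from property (f) in Lemma \ref{mainlem}. First I would record where $D$ sits: since $D\in \C_H$, by definition of $\C_H$ we have $D\su V(H)$, and since $H$ is an induced subgraph of $\tH_\R$ (Claim \ref{tH-red-prop0}) it follows that $D\su V(\tH_\R)$. Next I would use the hypothesis $D\cap X_\R'=\es$: recalling from (\ref{Xstrich-red}) that $X_\R'=V(\tH_\R)\mi V(\tH_\R')$, the containment $D\su V(\tH_\R)$ together with $D\cap(V(\tH_\R)\mi V(\tH_\R'))=\es$ forces $D\su V(\tH_\R')$.

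Now I would apply property (f) in Lemma \ref{mainlem} (with $\tH=\tH_\R$ and $\tH'=\tH_\R'$, which is legitimate since $\tH_\R$ was checked to act as $\tH$ in the lemma): as $D\in \C_H$ and $D\su V(\tH_\R')$, the conclusion is that $D$ is not adjacent to any vertex in $V(\tH_\R)\mi V(\tH_\R')=X_\R'$. Finally, since $X_\R'\su V(\tH_\R)$ and $D\su V(\tH_\R)$, and $\tH_\R=G-X_\R-\Z_\R$ is an induced subgraph of $G$, adjacency in $\tH_\R$ between a vertex of $D$ and a vertex of $X_\R'$ is the same as adjacency in $G$; hence no vertex of $X_\R'$ is adjacent in $G$ to any vertex of $D$, which is exactly the assertion that $D$ has no neighbours in $X_\R'$.

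I do not expect any real obstacle here: the argument is just an unwinding of the definitions of $\C_H$ and $X_\R'$ followed by a single invocation of property (f). The only point that needs a word of care is matching the notion of ``neighbour'' used in the claim (adjacency in $G$) with the adjacency relation appearing inside Lemma \ref{mainlem} (adjacency in $\tH_\R$), and this is harmless because $\tH_\R$ is an induced subgraph of $G$ that contains both $D$ and $X_\R'$.
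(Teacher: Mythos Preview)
Your proposal is correct and follows essentially the same approach as the paper: both deduce $D\su V(\tH_\R')$ from $D\su V(H)\su V(\tH_\R)$ together with $D\cap X_\R'=\es$, and then invoke property (f) of Lemma \ref{mainlem}. Your additional remark reconciling adjacency in $G$ with adjacency in $\tH_\R$ is a harmless clarification that the paper leaves implicit.
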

\begin{proof} From $D\in \C_H$ we know $D\su V(H)\su V(\tH_\R)$ and because of $D\cap X_\R'=\es$ and $X_\R'=V(\tH_\R)\mi V(\tH_\R')$, this implies $D\su V(\tH_\R')$. Then by property (f) in Lemma \ref{mainlem} the set $D$  has no neighbours in $V(\tH_\R)\mi V(\tH_\R')=X_\R'$.
\end{proof}

Finally, let us define the desired $(\l+1)$-appropriate colouring of $G$. All the above considerations hold when red is any of the $401k$ colours (in the arguments above it is just called `red' instead of `colour $i$' to make notation less confusing, since there are already number indices for the sets $\C_j$). So for each $i=1,\dots, 401k$ we can take red to be colour $i$ and apply the above arguments. Then for each $i=1,\dots, 401k$ we get a set $X_i'\su V(H)$ with all the above properties (where we replace each index `red' by $i$ and each word `red' by `colour $i$').

We define a colouring $\rho$ of $G$ as follows: Start with the colouring $\varphi$. Now colour all non-popular members of $\C_{\l+1}'$ according to the colouring $\psi$ (recall that all members of $\C_{\l+1}'$ are uncoloured in $\varphi$). Finally, for each $i=1,\dots, 401k$ colour all vertices in the set $X_i'$ with colour $i$. 

Let us now check that the colouring $\rho$ is $(\l+1)$-appropriate:
\begin{itemize}
\item[(i)] We need to check that each vertex has at most one colour in the colouring $\rho$. We know (since the colouring $\varphi$ is $\l$-appropriate), that every vertex has at most one colour in the colouring $\varphi$. When colouring the members of $\C_{\l+1}'$ according to the colouring $\psi$, we only colour vertices that are uncoloured in $\varphi$ (because all members of $\C_{\l+1}'$ are by definition uncoloured in $\varphi$), so after applying $\psi$ still every vertex has at most one colour.

Recall that $X_i'\su V(H)$ by (\ref{Xstrich-red2}) and that all vertices of $H$ are uncoloured in $\varphi$ by (\ref{H-def1}). Furthermore $\psi$ only coloured members of $\C_{\l+1}'$ and by (\ref{H-def2}) these are all disjoint from $V(H)$. Hence the sets $X_i'$ only consist of vertices that have not been coloured yet by $\varphi$ and $\psi$.

Thus, it remains to check that the sets $X_i'$ for $i=1,\dots,401k$ are disjoint. For each $i=1,\dots,401k$ we have by Claim \ref{Xstrich-red8}
\begin{equation}\label{Xstrich-i-1}
X_i'\su \bigcup_{v}B_v,
\end{equation}
where the union is taken over all $v\in V_{k-1}(H)\mi S$, which have at least one neighbour in $\C_{\l+1}'$ and such that all neighbours of $v$ in $\C_{\l+1}'$ have colour $i$ in $\psi$. Note that for each vertex $v\in V_{k-1}(H)\mi S$ with at least one neighbour in $\C_{\l+1}'$, there is at most one colour $i$ such that all neighbours of $v$ in $\C_{\l+1}'$ have colour $i$ in $\psi$. Hence for each vertex $v\in V_{k-1}(H)\mi S$ the corresponding set $B_v$ appears in (\ref{Xstrich-i-1}) for at most one $i$. Since the sets $B_v$ for $v\in V_{k-1}(H)\mi S$ are disjoint, the right-hand-sides of (\ref{Xstrich-i-1}) for $i=1,\dots,401k$ are disjoint. Hence the sets $X_i'$ for $i=1,\dots,401k$ are disjoint.
\item[(ii)] We need to show that each $D\in \C$ is either monochromatic or completely uncoloured in $\rho$. We already know that this is true in $\varphi$ (since $\varphi$ is $\l$-appropriate), and $\psi$ only colours entire members of $\C_{\l+1}'$ (recall that all members of $\C$ are disjoint). So it suffices to show that for each $D\in \C$ and each $i=1,\dots,401k$ we have $D\cap X_i'=\es$ or $D\su X_i'$. This is true by Claim \ref{Xstrich-red6}.
\item[(iii)] For $i=1,\dots,401k$, the set of vertices having colour $i$ in $\rho$ is
$$X_i\cup X_i'\cup\bigcup_{D\in \Z_i}D,$$
and we remember that this is a disjoint union.

Let $y_i^{(\l+1)}$ be the number of members of $\C_1\cup\dots\cup\C_{\l+1}$ that are coloured in colour $i$ in $\rho$. This includes the $y_i^{(\l)}$ members of $\C_1\cup\dots\cup\C_\l$ with colour $i$ in $\varphi$ and the members of $\Z_i\su \C_{\l+1}'\su \C_{\l+1}$. Thus,
\begin{equation}\label{Xstrich-i-2}
y_i^{(\l+1)}\geq y_i^{(\l)}+\vert \Z_i\vert.
\end{equation}
Now, for the set $X_i\cup X_i'\cup\bigcup_{D\in \Z_i}D$ of vertices having colour $i$ in $\rho$ we obtain
\begin{multline*}
\e_G\left(X_i\cup X_i'\cup\bigcup_{D\in \Z_i}D\right)=\e_G\left(X_i\cup\bigcup_{D\in \Z_i}D\right)+\e_{G-X_i-\Z_i}(X_i')\\
\leq \e_G(X_i)+\sum_{D\in \Z_i} \e_G(D)+\e_{G-X_i-\Z_i}(X_i').
\end{multline*}
We know that $\e_G(X_i)\leq (k-1)\vert X_i\vert+y_i^{(\l)}$ (as $\varphi$ is an $\l$-appropriate colouring), $\e_G(D)\leq (k-1)\vert D\vert+1$ for each $D\in \Z_i\su \C$ by Corollary \ref{coro-edges-good-set} and $\e_{G-X_i-\Z_i}(X_i')\leq (k-1)\vert X_i'\vert$ by Claim \ref{Xstrich-red5}. Plugging all of that in, we obtain
\begin{multline*}
\e_G\left(X_i\cup X_i'\cup\bigcup_{D\in \Z_i}D\right)\leq (k-1)\vert X_i\vert+y_i^{(\l)}+\sum_{D\in \Z_i} ((k-1)\vert D\vert+1)+(k-1)\vert X_i'\vert\\
=(k-1)\left(\vert X_i\vert+\vert X_i'\vert+\sum_{D\in \Z_i}\vert D\vert\right)+y_i^{(\l)}+\vert \Z_i\vert\leq (k-1)\left\vert X_i\cup X_i'\cup\bigcup_{D\in \Z_i}D\right\vert+y_i^{(\l+1)},
\end{multline*}
where the last inequality follows from (\ref{Xstrich-i-2}).
\item[(iv)] For each $i=1,\dots,401k$ the graph $G-X_i-\Z_i-X_i'$ has minimum degree at least $k$ by Claim \ref{Xstrich-red7}.
\item[(v)] The members of $\C_1\cup\dots\cup\C_{J'}$ are all uncoloured in $\varphi$, since $\varphi$ is $\l$-appropriate. Furthermore, they do not get coloured by $\psi$, since $\psi$ only colours members of $\C_{\l+1}'\su \C_{\l+1}$. By Claim \ref{Xstrich-red9} the members of $\C_1\cup\dots\cup\C_{J'}$ are also disjoint from all $X_i'$. Thus, the members of $\C_1\cup\dots\cup\C_{J'}$ are uncoloured in $\rho$.
\item[(vi)] For every $J'+1\leq j\leq \l$ the number of members of $\C_j$ that are uncoloured in $\rho$ is at most the number of members of $\C_j$ that are uncoloured in $\varphi$ (actually, one can check that these two numbers are equal, but this is not necessary for the argument). This latter number is at most $\frac{1}{4}\vert \C_j\vert$, since $\varphi$ is $\l$-appropriate. Hence in $\rho$ the number of uncoloured members of $\C_j$ is at most $\frac{1}{4}\vert \C_j\vert$ for each $J'+1\leq j\leq \l$.

For $j=\l+1$ note that by definition of $\C_{\l+1}'$ all members of $\C_{\l+1}\mi \C_{\l+1}'$ are coloured in $\varphi$ and hence also in $\rho$. Furthermore at least $\vert\C_{\l+1}'\vert-\frac{1}{4}\vert\C_{\l+1}\vert$ members of $\C_{\l+1}'\su \C_{\l+1}$ are coloured in $\psi$ and hence also in $\rho$. So all in all at least
$$\vert \C_{\l+1}\mi \C_{\l+1}'\vert+\vert\C_{\l+1}'\vert-\frac{1}{4}\vert\C_{\l+1}\vert=(\vert\C_{\l+1}\vert-\vert\C_{\l+1}'\vert)+(\vert\C_{\l+1}'\vert-\frac{1}{4}\vert\C_{\l+1}\vert)=\frac{3}{4}\vert\C_{\l+1}\vert$$
members of $\C_{\l+1}$ are coloured in $\rho$. So the number of members of $\C_{\l+1}$ that are uncoloured in $\rho$ is at most $\frac{1}{4}\vert \C_{\l+1}\vert$.
\item[(vii)] Let $D\in \C_{\l+2}\cup\dots\cup\C_{J}$ be uncoloured in $\rho$. We have to show that all of its neighbours $v\in V(G)$ are also uncoloured in $\rho$.

First, note that $D$ is disjoint from all members of $\C_{1}\cup\dots\cup\C_{\l+1}$. Since $D$ is uncoloured in $\rho$, it is also uncoloured in $\varphi$ and hence disjoint from $X_1\cup\dots\cup X_{401k}$. By (\ref{H-def1}) we can conclude that $D\su V(H)$, i.e.\ $D\in \C_H$. Since $D$ is uncoloured in $\rho$, it is disjoint from $X_1',\dots ,X_{401k}'$. Now Claim \ref{Xstrich-red10} implies that $D$ has no neighbours in $X_i'$ for any $i=1,\dots,401k$.

Trivially $D\in \C_{\l+1}\cup\dots\cup\C_{J}$ and since $\varphi$ is $\l$-appropriate, all neighbours of $D$ are uncoloured in $\varphi$. When applying the colouring $\psi$, we only colour members of $\C_{\l+1}'$, but $D\in \C_{\l+2}\cup\dots\cup\C_{J}$ has no neighbours in any member of $\C_{\l+1}'$. Hence all neighbours of $D$ are still uncoloured after applying $\psi$. Since $D$ has no neighbours in $X_i'$ for any $i=1,\dots,401k$, we can conclude that all neighbours of $D$ are uncoloured in $\rho$.
\end{itemize}

Hence the colouring $\rho$ is indeed $(\l+1)$-appropriate. This finishes the induction step and the proof of Lemma \ref{colour}.

\begin{remark} The reader might notice that we do not use property (d) from Lemma \ref{mainlem} for the proof of Lemma \ref{colour}. However, we prove Lemma \ref{mainlem} in Section \ref{sect5} by induction and we need property (d) in order keep the induction going.
\end{remark}
\section{Preparations for the proof of Lemma \ref{mainlem}} \label{sect4}
In this section, let $H$ be a graph and $\C_H$ be a collection of disjoint non-empty subsets of $V(H)$ such that for each $D\in\C_H$ we have $\e_H(D)\leq (k-1)\vert D\vert+1$ and $\deg_H(v)\geq k$ for each $v\in D$. Note that this means that all $D\in\C_H$ are disjoint from $V_{\leq k-1}(H)$.

The goal of this section is to introduce the shadow $\sh_H(w)$ of a vertex $w\in V_{\leq k-1}(H)$ and establish several useful properties of it. These play an important role in the proof of Lemma \ref{mainlem} in Section \ref{sect5}.

\begin{defi}\label{defi-shadow}For a vertex $w\in V_{\leq k-1}(H)$ define the shadow $\sh_H(w)$ of $w$ in $H$ as the minimal subset $Y\su V(H)$ with the following four properties:
\begin{itemize}
\item[(I)] $w\in Y$.
\item[(II)] For each $D\in \C_H$ either $D\su Y$ or $D\cap Y=\es$.
\item[(III)] If $v\in V(H)\mi Y$ is adjacent to a vertex in $Y$, then $\deg_{H-Y}(v)\geq k$.
\item[(IV)] If $D\in \C_H$ is adjacent to a vertex in $Y$, then $D\su Y$.
\end{itemize}
\end{defi}

First, let us check that there is indeed a unique minimal set with the properties (I) to (IV). Note that $Y=V(H)$ satisfies (I) to (IV). So it suffices to show that if $Y_1$ and $Y_2$ both satisfy these properties, then $Y_1\cap Y_2$ does as well:
\begin{itemize}
\item[(I)] Clearly $w\in Y_1\cap Y_2$.
\item[(II)] If $D\cap Y_i=\es$ for $i=1$ or $i=2$, then $D\cap (Y_1\cap Y_2)=\es$. Otherwise $D\su Y_1$ and $D\su Y_2$ and hence $D\su Y_1\cap Y_2$.
\item[(III)] Let $v\not\in Y_1\cap Y_2$ be adjacent to a vertex in $Y_1\cap Y_2$. Then $v\not\in Y_i$ for $i=1$ or for $i=2$. Let us assume without loss of generality that $v\not\in Y_1$. Since $v$ is adjacent to a vertex in $Y_1$, we have $\deg_{H-Y_1}(v)\geq k$ and hence
$$\deg_{H-(Y_1\cap Y_2)}(v)\geq \deg_{H-Y_1}(v)\geq k.$$
\item[(IV)] If $D\in \C_H$ is adjacent to a vertex in $Y_1\cap Y_2$, then it is both adjacent to a vertex in $Y_1$ and to a vertex in $Y_2$. Hence $D\su Y_1$ and $D\su Y_2$, so $D\su Y_1\cap Y_2$.
\end{itemize}
So there is indeed a unique minimal set $Y\su V(H)$ with the properties (I) to (IV) and Definition \ref{defi-shadow} makes sense.

We now describe a procedure to determine the shadow $\sh_H(w)$ of a vertex $w\in V_{\leq k-1}(H)$.

\begin{proc}\label{proc-shadow}For a vertex $w\in V_{\leq k-1}(H)$ consider the following algorithm during which $Y$ is always a subset of $V(H)$. In the beginning we set $Y=\lbrace w\rbrace$. As long as possible, we perform steps of the following form (if we have multiple options, we may choose either of the available options):
\begin{itemize}
\item If there is a vertex $v\not\in Y$ with $v\not\in D$ for all $D\in \C_H$, such that $v$ is adjacent to a vertex in $Y$ and $\deg_{H-Y}(v)\leq k-1$, then we are allowed to add $v$ to $Y$.
\item If there is a $D\in \C_H$ with $D\cap Y=\es$, such that $D$ is adjacent to a vertex in $Y$, then we are allowed to add all of $D$ to $Y$.
\end{itemize}
We terminate when we cannot perform any of these two steps.
\end{proc}

It is clear that this procedure must eventually terminate, because $Y\su V(H)$ becomes larger in every step and $V(H)$ is a finite set. Next, we show that when the procedure terminates, we always have $Y=\sh_H(w)$, independently of the choices we made during the procedure (in case we had multiple allowed steps to choose from).

\begin{claim}\label{proc-prop1} During Procedure \ref{proc-shadow} we always have $Y\su \sh_H(w)$.
\end{claim}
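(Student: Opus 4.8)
The plan is to argue by induction on the number of steps performed during Procedure~\ref{proc-shadow}. Write $Y^\ast=\sh_H(w)$ for brevity, and recall that $Y^\ast$ is by definition a subset of $V(H)$ satisfying properties (I)--(IV) from Definition~\ref{defi-shadow}. For the base case, note that at the start of the procedure we have $Y=\lbrace w\rbrace$, and $w\in Y^\ast$ by property (I), so $Y\su Y^\ast$ holds initially.

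For the induction step, suppose that at some point during the procedure we have $Y\su Y^\ast$, and that we then perform one of the two allowed steps. In the first type of step, we add a vertex $v\notin Y$ with $v\notin D$ for all $D\in\C_H$, such that $v$ is adjacent to a vertex in $Y$ and $\deg_{H-Y}(v)\leq k-1$; I would show $v\in Y^\ast$. Indeed, if $v\notin Y^\ast$, then since $v$ is adjacent to a vertex in $Y\su Y^\ast$, property (III) for $Y^\ast$ gives $\deg_{H-Y^\ast}(v)\geq k$; but $Y\su Y^\ast$ implies $\deg_{H-Y^\ast}(v)\leq\deg_{H-Y}(v)\leq k-1$, a contradiction. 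Hence $v\in Y^\ast$ and so the new set $Y\cup\lbrace v\rbrace$ is still contained in $Y^\ast$. In the second type of step, we add all of some $D\in\C_H$ with $D\cap Y=\es$ that is adjacent to a vertex in $Y$; since $Y\su Y^\ast$, the set $D$ is then adjacent to a vertex in $Y^\ast$, so property (IV) for $Y^\ast$ gives $D\su Y^\ast$, and again the new set $Y\cup D$ remains contained in $Y^\ast$. This completes the induction and proves the claim.

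The only tool needed beyond the defining properties (III) and (IV) of the shadow is the elementary monotonicity fact that deleting more vertices cannot increase a vertex's degree, i.e.\ $Y\su Y^\ast$ implies $\deg_{H-Y^\ast}(v)\leq\deg_{H-Y}(v)$. I do not expect any genuine obstacle here: the proof is a routine induction, and the substance is simply the observation that the two kinds of steps in Procedure~\ref{proc-shadow} are exactly the ``closure'' operations that $\sh_H(w)$ is required to be closed under, so each step keeps $Y$ inside $\sh_H(w)$.
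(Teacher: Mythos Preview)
Your proof is correct and follows essentially the same argument as the paper: an induction on the steps of the procedure, using property (III) together with the degree monotonicity $\deg_{H-Y^\ast}(v)\leq\deg_{H-Y}(v)$ for the first type of step, and property (IV) for the second.
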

\begin{proof} In the beginning we have $Y=\lbrace w\rbrace$ and $\lbrace w\rbrace\su \sh_H(w)$ by property (I). It remains to show that if $Y\su \sh_H(w)$ and we perform one of the operations in Procedure \ref{proc-shadow}, then the resulting set is still a subset of $\sh_H(w)$.
\begin{itemize}
\item Let $v\not\in Y$ be a vertex that is adjacent to a vertex in $Y\su \sh_H(w)$ and $\deg_{H-Y}(v)\leq k-1$. If $v\not\in \sh_H(w)$, then $v$ is adjacent to a vertex in $\sh_H(w)$ and
$$\deg_{H-\sh_H(w)}(v)\leq \deg_{H-Y}(v)\leq k-1.$$
This would be a contradiction to $\sh_H(w)$ having property (III). Hence we must have $v\in \sh_H(w)$ and hence $Y\cup \lbrace v\rbrace\su \sh_H(w)$.
\item Let $D\in \C_H$ with $D\cap Y=\es$ be adjacent to a vertex in $Y\su \sh_H(w)$, then $D$ is also adjacent to a vertex in $\sh_H(w)$ and by property (IV) of $\sh_H(w)$ we can conclude that $D\su \sh_H(w)$. Hence $Y\cup D\su \sh_H(w)$.
\end{itemize}
This finishes the proof of the claim.\end{proof}

\begin{claim}\label{proc-prop2} During Procedure \ref{proc-shadow} the set $Y$ always satisfies the properties (I) and (II) in Definition \ref{defi-shadow}.
\end{claim}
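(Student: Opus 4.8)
The plan is to prove Claim \ref{proc-prop2} by induction on the number of steps performed so far in Procedure \ref{proc-shadow}. Property (I) is essentially immediate: the procedure starts with $Y=\lbrace w\rbrace$ and every step only enlarges $Y$, so $w\in Y$ holds throughout the run. The real content is to verify that property (II) is preserved under each of the two allowed operations, so that is where I would concentrate.

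For the base case, we have $Y=\lbrace w\rbrace$ with $w\in V_{\leq k-1}(H)$. Here I would invoke the observation made at the beginning of Section \ref{sect4}: every $D\in\C_H$ is disjoint from $V_{\leq k-1}(H)$, because $\deg_H(v)\geq k$ for all $v\in D$. Hence $w\notin D$ for each $D\in\C_H$, so $D\cap\lbrace w\rbrace=\es$, and property (II) holds at the start.

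For the inductive step, assume $Y$ satisfies (II) and consider the two possible operations. If we add a vertex $v\notin Y$ with $v\notin D$ for all $D\in\C_H$ (the first type of step), then for every $D\in\C_H$ we have $D\cap(Y\cup\lbrace v\rbrace)=D\cap Y$, so the dichotomy ``$D\su Y$ or $D\cap Y=\es$'' is inherited by $Y\cup\lbrace v\rbrace$. If instead we add all of some $D_0\in\C_H$ with $D_0\cap Y=\es$ (the second type of step), then for $D=D_0$ we now have $D_0\su Y\cup D_0$; and for any other $D\in\C_H$, the pairwise disjointness of the members of $\C_H$ gives $D\cap D_0=\es$, hence $D\cap(Y\cup D_0)=D\cap Y$, so property (II) again follows from the inductive hypothesis. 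This completes the induction.

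The argument is entirely routine and I do not anticipate a genuine obstacle; the only two points one must not overlook are the use of the disjointness of the members of $\C_H$ when handling the second operation, and the use in the base case of the fact that the members of $\C_H$ avoid $V_{\leq k-1}(H)$ entirely, which is precisely why the single vertex $w$ does not lie inside any set of $\C_H$.
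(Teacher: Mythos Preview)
Your proof is correct and follows essentially the same approach as the paper: the paper also notes that $w\notin D$ for all $D\in\C_H$ because $w\in V_{\leq k-1}(H)$, and then observes that the procedure only ever adds complete sets $D\in\C_H$ (which are pairwise disjoint) or single vertices lying in no $D\in\C_H$. You have simply spelled out the same argument as an explicit induction on the steps.
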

\begin{proof}For property (I), this is clear as we start with the set $Y=\lbrace w\rbrace$. For property (II), note that $w\in V_{\leq k-1}(H)$ implies $w\not\in D$ for all $D\in \C_H$. During the procedure we only add complete sets $D\in C_H$ (recall that these sets are all disjoint) or vertices that are not contained in any $D\in \C_H$. Thus, $Y$ always satisfies property (II) as well.\end{proof}

\begin{claim}\label{proc-prop3} When Procedure \ref{proc-shadow} terminates, the set $Y$ satisfies the properties (III) and (IV) in Definition \ref{defi-shadow}.
\end{claim}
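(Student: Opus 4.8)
The plan is to argue by contradiction, exploiting the fact that properties (III) and (IV) are essentially the \emph{stopping conditions} of Procedure \ref{proc-shadow}. Recall from Claim \ref{proc-prop2} that throughout the procedure --- and in particular when it terminates --- the set $Y$ already satisfies properties (I) and (II); so it only remains to verify (III) and (IV).

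First I would handle property (IV). Suppose $D\in\C_H$ is adjacent to a vertex in $Y$. By property (II) either $D\su Y$, in which case we are done, or $D\cap Y=\es$. In the latter case the second type of step in Procedure \ref{proc-shadow} would be available (we could add all of $D$ to $Y$), contradicting the assumption that the procedure has terminated. Hence $D\su Y$.

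Next I would handle property (III). Suppose, for contradiction, that some $v\in V(H)\mi Y$ is adjacent to a vertex in $Y$ but $\deg_{H-Y}(v)\leq k-1$. I distinguish two cases according to whether $v$ lies in a member of $\C_H$. If $v\notin D$ for every $D\in\C_H$, then $v$ is exactly the kind of vertex the first type of step can add to $Y$, contradicting termination. If instead $v\in D$ for some $D\in\C_H$, then since $v\notin Y$ we cannot have $D\su Y$, so property (II) forces $D\cap Y=\es$; but the edge witnessing that $v$ is adjacent to $Y$ also witnesses that $D$ is adjacent to a vertex in $Y$, so the second type of step is available, again contradicting termination. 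In either case we reach a contradiction, so $\deg_{H-Y}(v)\geq k$, which establishes (III).

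The argument is short and I do not expect a serious obstacle; the only points that require a little care are remembering to invoke Claim \ref{proc-prop2} so that property (II) is available once the procedure stops, and making the case split in the proof of (III) according to whether the offending vertex $v$ belongs to some $D\in\C_H$ --- these two cases correspond exactly to the two types of steps in Procedure \ref{proc-shadow}, which is precisely why the termination of the procedure yields both (III) and (IV).
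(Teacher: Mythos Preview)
Your proof is correct and follows essentially the same approach as the paper's: establish (IV) first via termination and property (II), then derive (III) by a case split on whether the offending vertex lies in some $D\in\C_H$. The only cosmetic difference is that in the case $v\in D$ the paper invokes the already-proved (IV) to get $D\su Y$ (contradicting $v\notin Y$), whereas you re-argue directly that the second step is available; both are equally valid.
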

\begin{proof} Let us start with proving property (IV). Let $D\in \C_H$ be adjacent to a vertex in $Y$, we need to show that $D\su Y$. By Claim \ref{proc-prop2} the set $Y$ satisfies property (II) and therefore $D\su Y$ or $D\cap Y=\es$. If $D\cap Y=\es$, then we could perform the second step in Procedure \ref{proc-shadow} and add $D$ to $Y$. This is a contradiction to the assumption that the procedure already terminated, hence $D\su Y$ as desired.

Now we prove property (III). Let $v\in V(H)\mi Y$ be a vertex that is adjacent to a vertex in $Y$. We have to prove $\deg_{H-Y}(v)\geq k$. Suppose the contrary, i.e.\ $\deg_{H-Y}(v)\leq k-1$. If $v\in D$ for some $D\in \C_H$, then $D$ is adjacent to a vertex in $Y$ as well. But then by property (IV), which we already proved, we would have $D\su Y$ and in particular $v\in Y$, a contradiction. Hence $v\not\in D$ for all $D\in \C_H$. But then we can perform the first step in Procedure \ref{proc-shadow} and add $v$ to $Y$. This is again contradiction to the assumption that the procedure already terminated, hence $\deg_{H-Y}(v)\geq k$.
\end{proof}

\begin{claim}\label{proc-prop4} When Procedure \ref{proc-shadow} terminates, we have $Y=\sh_H(w)$.
\end{claim}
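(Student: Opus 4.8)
The plan is to simply combine the three preceding claims. First I would note that Procedure \ref{proc-shadow} terminates after finitely many steps, as already observed, so it makes sense to speak of the set $Y$ at termination. By Claim \ref{proc-prop1}, throughout the procedure (and in particular at termination) we have $Y\su\sh_H(w)$. This gives one of the two desired inclusions.

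For the reverse inclusion, I would invoke Claims \ref{proc-prop2} and \ref{proc-prop3}: when the procedure terminates, the set $Y$ satisfies property (I) and property (II) (Claim \ref{proc-prop2}) as well as property (III) and property (IV) (Claim \ref{proc-prop3}) from Definition \ref{defi-shadow}. Thus $Y$ is a subset of $V(H)$ satisfying all four defining properties of the shadow. Since $\sh_H(w)$ is by definition the \emph{minimal} such subset, we conclude $\sh_H(w)\su Y$.

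Combining $Y\su\sh_H(w)$ with $\sh_H(w)\su Y$ yields $Y=\sh_H(w)$, which is the claim. There is no real obstacle here: all the work has been front-loaded into Claims \ref{proc-prop1}--\ref{proc-prop3}, and this final claim is just the bookkeeping step that records $Y$ at termination is indeed the shadow (and, as a byproduct, that the output of the procedure does not depend on the choices made along the way, since $\sh_H(w)$ is uniquely determined).

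\begin{proof}
Procedure \ref{proc-shadow} terminates after finitely many steps, so consider the set $Y$ at the moment of termination. By Claim \ref{proc-prop1} we have $Y\su \sh_H(w)$. Conversely, by Claims \ref{proc-prop2} and \ref{proc-prop3} the set $Y$ satisfies properties (I), (II), (III) and (IV) in Definition \ref{defi-shadow}. As $\sh_H(w)$ is the minimal subset of $V(H)$ with these four properties, it follows that $\sh_H(w)\su Y$. Hence $Y=\sh_H(w)$.
\end{proof}
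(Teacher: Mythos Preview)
Your proposal is correct and matches the paper's own proof essentially line for line: both obtain $Y\subseteq\sh_H(w)$ from Claim \ref{proc-prop1}, then use Claims \ref{proc-prop2} and \ref{proc-prop3} together with the minimality of $\sh_H(w)$ to get the reverse inclusion.
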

\begin{proof} By Claim \ref{proc-prop1} we have $Y\su \sh_H(w)$. On the other hand, by Claim \ref{proc-prop2} and Claim \ref{proc-prop3} the set $Y$ satisfies the properties (I) to (IV) in Definition \ref{defi-shadow}. Since $\sh_H(w)$ is the unique minimal set satisfying these properties, we have $\sh_H(w)\su Y$. All in all this proves $Y=\sh_H(w)$.
\end{proof}

So Procedure \ref{proc-shadow} indeed determines the shadow $\sh_H(w)$ of $w\in V_{\leq k-1}(H)$ in $H$. We can use this to establish the following important properties of the shadow $\sh_H(w)$, which are used in the proof of Lemma \ref{mainlem} in Section \ref{sect5}.

\begin{lem}\label{proc-prop5} Let $w\in V_{\leq k-1}(H)$. Then during Procedure \ref{proc-shadow} the quantity $(k-1)\vert Y\vert-\e_H(Y)$ is monotone increasing. In the beginning for $Y=\lbrace w\rbrace$ the quantity is non-negative.
\end{lem}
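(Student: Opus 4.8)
The plan is to treat the two assertions separately: first the non-negativity of $(k-1)\vert Y\vert-\e_H(Y)$ at the start of Procedure~\ref{proc-shadow}, then the fact that neither type of step can decrease this quantity.

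For the initial value, when $Y=\lbrace w\rbrace$ we have $\e_H(\lbrace w\rbrace)=\deg_H(w)$, so $(k-1)\vert Y\vert-\e_H(Y)=(k-1)-\deg_H(w)\geq 0$ because $w\in V_{\leq k-1}(H)$ forces $\deg_H(w)\leq k-1$. This is immediate.

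For monotonicity, the key bookkeeping fact is that when $Y$ is enlarged to $Y\cup Z$ for a set $Z$ disjoint from $Y$, the increase $\e_H(Y\cup Z)-\e_H(Y)$ equals $\e_{H-Y}(Z)$, the number of edges of $H$ incident to $Z$ but not to $Y$; this follows directly from $\e_H(X)=e(H)-e(H-X)$. I would then split into the two step types. In a step of the first type we add a single vertex $v\notin Y$ with $\deg_{H-Y}(v)\leq k-1$, and $\e_{H-Y}(\lbrace v\rbrace)=\deg_{H-Y}(v)$, so the quantity changes by $(k-1)-\deg_{H-Y}(v)\geq 0$. In a step of the second type we add a whole set $D\in\C_H$ with $D\cap Y=\es$ that is adjacent to $Y$; here the edges incident to $D$ but not to $Y$ are all edges incident to $D$ except those joining $D$ to $Y$, and since $D$ is adjacent to $Y$ there is at least one such edge, giving $\e_{H-Y}(D)\leq \e_H(D)-1\leq (k-1)\vert D\vert$ by the hypothesis $\e_H(D)\leq (k-1)\vert D\vert+1$ on members of $\C_H$. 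Hence the quantity changes by at least $(k-1)\vert D\vert-\e_{H-Y}(D)\geq 0$.

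Putting these together, the quantity starts non-negative and is non-decreasing at every step, which is the claim. I expect the only delicate point to be the second step type: one must not forget to subtract the (at least one) edge between $D$ and $Y$, since it is exactly this edge that cancels the ``$+1$'' slack in the bound $\e_H(D)\leq (k-1)\vert D\vert+1$ — without invoking the adjacency of $D$ to $Y$ the quantity could in principle drop by $1$. Everything else is a routine computation, so I do not anticipate any genuine obstacle.
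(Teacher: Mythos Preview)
Your proposal is correct and follows essentially the same argument as the paper: the initial non-negativity via $\deg_H(w)\leq k-1$, and the two-case monotonicity check using $\e_H(Y\cup Z)=\e_H(Y)+\e_{H-Y}(Z)$, with the crucial observation in the second case that adjacency of $D$ to $Y$ yields $\e_{H-Y}(D)\leq \e_H(D)-1\leq (k-1)\vert D\vert$. There is nothing to add.
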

\begin{proof}
In the beginning for $Y=\lbrace w\rbrace$ we have 
$$(k-1)\vert \lbrace w\rbrace\vert-\e_H(\lbrace w\rbrace)=(k-1)-\deg_H(w)\geq 0.$$
Now let us prove that the quantity is monotone increasing:
\begin{itemize}
\item Let $v\not\in Y$ be a vertex with $\deg_{H-Y}(v)\leq k-1$. Then
$$\e_H(Y\cup \lbrace v\rbrace)=\e_H(Y)+\e_{H-Y}(\lbrace v\rbrace)=\e_H(Y)+\deg_{H-Y}(v)\leq \e_H(Y)+(k-1)$$
and therefore
$$(k-1)\vert Y\cup \lbrace v\rbrace\vert-\e_H(Y\cup \lbrace v\rbrace)\geq (k-1)(\vert Y\vert+1)-(\e_H(Y)+(k-1))=(k-1)\vert Y\vert-\e_H(Y).$$
\item Let $D\in \C_H$ be adjacent to a vertex in $Y$ and $D\cap Y=\es$. Then $\e_{H-Y}(D)\leq \e_{H}(D)-1$, because at least one of the edges of $H$ that are incident with a vertex in $D$ leads to a vertex in $Y$ and does therefore not exist in the graph $H-Y$. Furthermore recall that $\e_H(D)\leq (k-1)\vert D\vert+1$ by the assumptions in the beginning of Section \ref{sect4}. Together this gives $\e_{H-Y}(D)\leq (k-1)\vert D\vert$. Now
$$\e_H(Y\cup D)=\e_H(Y)+\e_{H-Y}(D)\leq \e_H(Y)+(k-1)\vert D\vert$$
and therefore
$$(k-1)\vert Y\cup D\vert-\e_H(Y\cup D)\geq (k-1)(\vert Y\vert+\vert D\vert)-(\e_H(Y)+(k-1)\vert D\vert)=(k-1)\vert Y\vert-\e_H(Y).$$
\end{itemize}
\end{proof}

\begin{coro}\label{shadow-prop1} For every $w\in V_{\leq k-1}(H)$ we have $\e_H(\sh_H(w))\leq (k-1)\vert \sh_H(w)\vert$.
\end{coro}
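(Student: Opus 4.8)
The plan is to read off Corollary~\ref{shadow-prop1} directly from Lemma~\ref{proc-prop5}. First I would run Procedure~\ref{proc-shadow} on the vertex $w\in V_{\leq k-1}(H)$; by Claim~\ref{proc-prop4} it terminates with $Y=\sh_H(w)$. During the procedure the quantity $(k-1)\lvert Y\rvert-\e_H(Y)$ is monotone increasing by Lemma~\ref{proc-prop5}, and its initial value for $Y=\lbrace w\rbrace$ is $(k-1)-\deg_H(w)\geq 0$ since $w\in V_{\leq k-1}(H)$. Hence at termination we have $(k-1)\lvert\sh_H(w)\rvert-\e_H(\sh_H(w))\geq 0$, which is exactly the claimed inequality.

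There is essentially no obstacle here: the corollary is an immediate consequence of the monotonicity statement plus the non-negativity of the starting value, both of which are already established in Lemma~\ref{proc-prop5}, together with the identification of the procedure's output with $\sh_H(w)$ from Claim~\ref{proc-prop4}. The only point to be careful about is that Lemma~\ref{proc-prop5} is phrased in terms of the running variable $Y$ in Procedure~\ref{proc-shadow}, so one should explicitly invoke Claim~\ref{proc-prop4} to conclude that the final value of $Y$ equals $\sh_H(w)$, rather than treating this as obvious.

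Concretely, the proof I would write is: ``By Lemma~\ref{proc-prop5}, when we run Procedure~\ref{proc-shadow} starting from $Y=\lbrace w\rbrace$, the quantity $(k-1)\lvert Y\rvert-\e_H(Y)$ starts out non-negative and only increases. By Claim~\ref{proc-prop4}, when the procedure terminates we have $Y=\sh_H(w)$. Therefore $(k-1)\lvert\sh_H(w)\rvert-\e_H(\sh_H(w))\geq (k-1)\lvert\lbrace w\rbrace\rvert-\e_H(\lbrace w\rbrace)=(k-1)-\deg_H(w)\geq 0$, which gives $\e_H(\sh_H(w))\leq (k-1)\lvert\sh_H(w)\rvert$.'' This is a two-line argument and needs no further computation.
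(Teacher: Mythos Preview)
Your proof is correct and follows exactly the same route as the paper: the corollary is deduced immediately from Claim~\ref{proc-prop4} (the procedure terminates with $Y=\sh_H(w)$) and Lemma~\ref{proc-prop5} (the quantity $(k-1)\lvert Y\rvert-\e_H(Y)$ is non-negative at the start and monotone increasing). The paper's own proof is in fact just the one-line remark that this follows from those two results.
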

\begin{proof}This is an immediate corollary of Claim \ref{proc-prop4} and Lemma \ref{proc-prop5}.
\end{proof}

\begin{coro}\label{shadow-prop2} Let $w\in V_{\leq k-1}(H)$ with $\e_H(\sh_H(w))= (k-1)\vert \sh_H(w)\vert$. Then in each step during the Procedure \ref{proc-shadow} we have $\e_H(Y)=(k-1)\vert Y\vert$, i.e.\ the quantity $(k-1)\vert Y\vert-\e_H(Y)$ is constantly zero throughout the procedure.
\end{coro}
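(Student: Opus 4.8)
The plan is to read off the conclusion from Lemma \ref{proc-prop5} together with Claim \ref{proc-prop4}. The point is that Lemma \ref{proc-prop5} tells us the quantity $(k-1)\vert Y\vert-\e_H(Y)$ behaves very rigidly during Procedure \ref{proc-shadow}: it starts at a \emph{non-negative} value when $Y=\lbrace w\rbrace$, and it is monotone increasing (non-decreasing) in every step of the procedure. On the other hand, by Claim \ref{proc-prop4}, when the procedure terminates we have $Y=\sh_H(w)$, so at that final stage the quantity equals $(k-1)\vert \sh_H(w)\vert-\e_H(\sh_H(w))$, which is $0$ by the hypothesis of the corollary.

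So first I would record the list of values $(k-1)\vert Y\vert-\e_H(Y)$ taken by the quantity at the successive stages $Y_0=\lbrace w\rbrace, Y_1, \dots, Y_m=\sh_H(w)$ of the procedure. By Lemma \ref{proc-prop5} these form a non-decreasing sequence whose first term is $\geq 0$ and whose last term is $=0$. Then I would simply observe that a non-decreasing sequence with non-negative first term and zero last term must be identically zero: the first term lies between $0$ and the last term $0$, so it equals $0$, and monotonicity forces every intermediate term to equal $0$ as well. Hence $\e_H(Y)=(k-1)\vert Y\vert$ at every step, which is exactly the claim.

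There is essentially no obstacle here; this is a one-line deduction from the already-established monotonicity. The only thing to be slightly careful about is the direction of the inequalities — making sure "monotone increasing" is used as "non-decreasing" and that the endpoint value being $0$ together with the non-negativity of the starting value pins everything down — but no further computation or case analysis is needed.

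\begin{proof} By Lemma \ref{proc-prop5}, during Procedure \ref{proc-shadow} the quantity $(k-1)\vert Y\vert-\e_H(Y)$ is monotone increasing, and it is non-negative at the start, when $Y=\lbrace w\rbrace$. By Claim \ref{proc-prop4}, when the procedure terminates we have $Y=\sh_H(w)$, so at that point the quantity equals $(k-1)\vert \sh_H(w)\vert-\e_H(\sh_H(w))=0$ by assumption. Therefore the (non-negative) value of the quantity at the start is at most its value at termination, which is $0$; hence it equals $0$ at the start. By monotonicity, it then equals $0$ at every step of the procedure, i.e.\ $\e_H(Y)=(k-1)\vert Y\vert$ throughout. \end{proof}
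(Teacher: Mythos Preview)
Your proof is correct and follows exactly the same approach as the paper, which simply states that the corollary is an immediate consequence of Claim \ref{proc-prop4} and Lemma \ref{proc-prop5}. You have just spelled out the one-line monotonicity argument that the paper leaves implicit.
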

\begin{proof}This is also an immediate corollary of Claim \ref{proc-prop4} and Lemma \ref{proc-prop5}.
\end{proof}

\begin{lem}\label{shadow-prop3} For every $w\in V_{\leq k-1}(H)$ we have
$$\sum_{\substack{s\in \sh_H(w)\\ \deg_H(s)\leq k-1}}(k-\deg_H(s))\leq (k-1)\vert \sh_H(w)\vert-\e_H(\sh_H(w))+1.$$
\end{lem}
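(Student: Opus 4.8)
The plan is to track the quantity $(k-1)\vert Y\vert - \e_H(Y)$ together with a companion ``degree-deficit'' sum through Procedure \ref{proc-shadow}, and show that a suitably strengthened inequality is maintained at every step. Concretely, I would prove by induction on the steps of the procedure that at every stage
$$\sum_{\substack{s\in Y\\ \deg_H(s)\leq k-1}}(k-\deg_H(s))\leq (k-1)\vert Y\vert - \e_H(Y) + 1,$$
and then invoke Claim \ref{proc-prop4} to conclude the statement for $Y=\sh_H(w)$. The base case is $Y=\{w\}$: here the left-hand side is exactly $k-\deg_H(w)$ (since $w\in V_{\leq k-1}(H)$) and the right-hand side is $(k-1)-\deg_H(w)+1 = k-\deg_H(w)$, so equality holds.

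For the induction step I would examine the two types of operations in Procedure \ref{proc-shadow} separately. When we add a single vertex $v\notin Y$ with $v\notin D$ for all $D\in\C_H$ and $\deg_{H-Y}(v)\leq k-1$: the right-hand side increases by $(k-1) - \deg_{H-Y}(v)$, exactly as computed in the proof of Lemma \ref{proc-prop5}, i.e.\ by $(k-1)-\e_{H-Y}(\{v\})$. Meanwhile the left-hand side increases by at most $\max(0, k-\deg_H(v))$; since $\deg_H(v)\geq \deg_{H-Y}(v)$ we have $k-\deg_H(v)\leq k-\deg_{H-Y}(v)$, and also $\deg_{H-Y}(v)\leq k-1$ gives $(k-1)-\deg_{H-Y}(v)\geq k-\deg_H(v)$ when the latter is positive (and $(k-1)-\deg_{H-Y}(v)\geq 0$ always), so the increase of the left-hand side is bounded by the increase of the right-hand side. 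When we add a whole set $D\in\C_H$ adjacent to $Y$ with $D\cap Y=\es$: by the assumptions at the start of Section \ref{sect4} every vertex of $D$ has degree at least $k$ in $H$, so $D$ contributes nothing to the left-hand side (it already contributed nothing before), hence the left-hand side is unchanged; and the right-hand side is non-decreasing by Lemma \ref{proc-prop5}. In both cases the desired inequality is preserved.

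The main thing to get right is the arithmetic in the single-vertex case, in particular the inequality $(k-1)-\deg_{H-Y}(v) \ge \max(0, k-\deg_H(v))$, which relies on two facts: $\deg_{H-Y}(v)\le k-1$ (the precondition for performing the step) and $\deg_{H-Y}(v)\le \deg_H(v)$ (monotonicity under vertex deletion). I would also need the small observation that adding $v$ can only increase the left-hand side — it might move some already-present neighbour of $v$ across the threshold $k-1$? No: the left-hand side sums over $s\in Y$ with $\deg_H(s)\le k-1$, using $\deg_H(s)$, which does not change when we enlarge $Y$; the summation set only grows by possibly including $v$ itself. So the only new term is $k-\deg_H(v)$ if $\deg_H(v)\le k-1$, and $0$ otherwise. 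This makes the step clean. I do not anticipate any serious obstacle; the proof is a careful bookkeeping argument riding on top of Lemma \ref{proc-prop5} and Claim \ref{proc-prop4}.
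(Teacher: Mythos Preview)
Your approach is essentially the paper's: both track $(k-1)\vert Y\vert - \e_H(Y)$ through Procedure \ref{proc-shadow} and show that each time a low-degree vertex $s$ is added the quantity rises by at least $k-\deg_H(s)$. The paper sums these contributions directly; you package the same computation as an invariant maintained step by step. Either way the content is the same.

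There is, however, a gap in your arithmetic for the single-vertex step. You need
\[
(k-1)-\deg_{H-Y}(v)\ \geq\ k-\deg_H(v),
\]
i.e.\ $\deg_H(v)\geq \deg_{H-Y}(v)+1$. The two facts you cite, $\deg_{H-Y}(v)\leq k-1$ and $\deg_{H-Y}(v)\leq \deg_H(v)$, do \emph{not} imply this: if $v$ had no neighbour in $Y$ both would hold with $\deg_H(v)=\deg_{H-Y}(v)$ and the inequality would fail by exactly one. What you actually need is the precondition of the step that you dropped from your description, namely that $v$ is adjacent to a vertex in $Y$; this immediately gives $\deg_{H-Y}(v)\leq \deg_H(v)-1$, and the inequality follows. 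The paper uses precisely this observation (writing $\deg_{H-Y_s}(s)\leq \deg_H(s)-1$). With that one-line fix your proof is complete and matches the paper's.
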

\begin{proof}Let us consider Procedure \ref{proc-shadow} for determining $\sh_H(w)$. If we have multiple options, let us fix some specific choices, so that we obtain some fixed procedure starting with $Y=\lbrace w\rbrace$ and arriving at $Y=\sh_H(w)$. Let us consider the quantity $(k-1)\vert Y\vert-\e_H(Y)$, which is by Lemma \ref{proc-prop5} monotone increasing throughout the procedure.

Consider any $s\in \sh_H(w)$ with $\deg_H(s)\leq k-1$ and $s\neq w$. Since every $D\in \C_H$ is disjoint from $V_{\leq k-1}(H)$ and $s\in V_{\leq k-1}(H)$, the vertex $s$ is not contained in any $D\in \C_H$. So in order to become part of the set $Y$, there must be a step in the procedure where we add precisely the vertex $s$. Let $Y_s$ be the set $Y$ just before this step, then after the step the set $Y$ becomes $Y_s\cup \lbrace s\rbrace$. Note that in order to be allowed to perform this step, the vertex $s$ must be adjacent to some vertex in $Y_s$. Hence $\deg_{H-Y_s}(s)\leq \deg_{H}(s)-1$. Thus,
$$\e_H(Y_s\cup \lbrace s\rbrace)=\e_H(Y_s)+\e_{H-Y_s}(\lbrace s\rbrace)=\e_H(Y_s)+\deg_{H-Y_s}(s)\leq \e_H(Y_s)+(\deg_{H}(s)-1)$$
and therefore
\begin{multline*}
(k-1)\vert Y_s\cup \lbrace s\rbrace\vert-\e_H(Y_s\cup \lbrace s\rbrace)\geq (k-1)(\vert Y_s\vert+1)-(\e_H(Y_s)+\deg_{H}(s)-1)\\
=((k-1)\vert Y_s\vert-\e_H(Y_s))+k-\deg_{H}(s).
\end{multline*}
Hence the quantity $(k-1)\vert Y\vert-\e_H(Y)$ increases by at least $k-\deg_{H}(s)$ from $Y=Y_s$ to $Y=Y_s\cup \lbrace s\rbrace$.

Applying this argument for all $s\in \sh_H(w)$ with $\deg_H(s)\leq k-1$ and $s\neq w$, we can conclude that during the procedure the quantity $(k-1)\vert Y\vert-\e_H(Y)$ increases by at least
$$\sum_{\substack{s\in \sh_H(w)\\ \deg_H(s)\leq k-1\\s\neq w}}(k-\deg_H(s)).$$

In the beginning for $Y=\lbrace w\rbrace$ the quantity $(k-1)\vert Y\vert-\e_H(Y)$ equals
$$(k-1)\vert \lbrace w\rbrace\vert-\e_H(\lbrace w\rbrace)=(k-1)-\deg_H(w).$$

Hence when the process terminates the quantity $(k-1)\vert Y\vert-\e_H(Y)$ must be at least
$$(k-1)-\deg_H(w)+\sum_{\substack{s\in \sh_H(w)\\ \deg_H(s)\leq k-1\\s\neq w}}(k-\deg_H(s))=\sum_{\substack{s\in \sh_H(w)\\ \deg_H(s)\leq k-1}}(k-\deg_H(s))-1.$$
But by Claim \ref{proc-prop4} at the termination point we have $Y=\sh_H(w)$. Thus,
$$(k-1)\vert\sh_H(w)\vert-\e_H(\sh_H(w))\geq \sum_{\substack{s\in \sh_H(w)\\ \deg_H(s)\leq k-1}}(k-\deg_H(s))-1,$$
which proves the lemma.
\end{proof}

\begin{coro}\label{shadow-prop4} Let $w\in V_{\leq k-1}(H)$ with $\e_H(\sh_H(w))= (k-1)\vert \sh_H(w)\vert$. Then $w$ is the only vertex $v\in \sh_H(w)$ with $\deg_H(v)\leq k-1$ and furthermore $\deg_H(w)= k-1$.
\end{coro}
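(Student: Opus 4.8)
\textbf{Proof plan for Corollary \ref{shadow-prop4}.}
The plan is to derive everything from Lemma \ref{shadow-prop3} combined with the hypothesis $\e_H(\sh_H(w)) = (k-1)\vert\sh_H(w)\vert$. Plugging this equality into the inequality of Lemma \ref{shadow-prop3}, the right-hand side becomes exactly $1$, so we obtain
$$\sum_{\substack{s\in \sh_H(w)\\ \deg_H(s)\leq k-1}}(k-\deg_H(s))\leq 1.$$
Now $w$ itself lies in $\sh_H(w)$ by property (I) of Definition \ref{defi-shadow}, and $w\in V_{\leq k-1}(H)$, so $w$ is one of the vertices in the sum. Since $H$ has no isolated vertices in the relevant sense (in fact every vertex not in some $D\in\C_H$ could have small degree, but we only need $\deg_H(w)\le k-1$), the term contributed by $w$ is $k-\deg_H(w)\geq 1$. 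Combined with the bound that the whole sum is at most $1$, and with the fact that every summand $k-\deg_H(s)$ is a positive integer (each such $s$ has $\deg_H(s)\leq k-1$, so $k-\deg_H(s)\geq 1$), the sum must consist of exactly one term, equal to $1$. Hence $w$ is the only vertex $v\in\sh_H(w)$ with $\deg_H(v)\leq k-1$, and its term is $k-\deg_H(w)=1$, i.e.\ $\deg_H(w)=k-1$.

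There is essentially no obstacle here: the whole corollary is a direct consequence of the already-established Lemma \ref{shadow-prop3} together with the integrality and positivity of the quantities $k-\deg_H(s)$ for $s\in V_{\leq k-1}(H)$. The only point to be slightly careful about is that $w$ genuinely appears in the sum (which is immediate from property (I)) and that there are no other forced terms; but since the total is bounded by $1$ and $w$ already contributes at least $1$, no other vertex of degree at most $k-1$ can lie in $\sh_H(w)$.
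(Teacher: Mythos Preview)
Your proof is correct and follows essentially the same approach as the paper: apply Lemma \ref{shadow-prop3}, observe that the right-hand side equals $1$ under the hypothesis, and conclude from the positivity of the summands and the fact that $w$ contributes $k-\deg_H(w)\geq 1$. The parenthetical remark about isolated vertices is unnecessary and a bit distracting, but otherwise this matches the paper's argument exactly.
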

\begin{proof}The right-hand-side of the inequality in Lemma \ref{shadow-prop3} is 1, while all summands on the left-hand-side are positive and one of them is the summand $k-\deg_H(w)\geq 1$. This immediately implies the statement of this corollary.
\end{proof}

\begin{coro}\label{shadow-prop5} Let $w\in V_{\leq k-1}(H)$ with $\e_H(\sh_H(w))<(k-1)\vert \sh_H(w)\vert$. Then
$$\sum_{\substack{s\in \sh_H(w)\\ \deg_H(s)\leq k-1}}(k-\deg_H(s))\leq 2((k-1)\vert \sh_H(w)\vert-\e_H(\sh_H(w))).$$
\end{coro}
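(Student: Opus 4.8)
The plan is to deduce this immediately from Lemma \ref{shadow-prop3} together with an integrality observation. Write $q = (k-1)\vert\sh_H(w)\vert - \e_H(\sh_H(w))$ for the ``edge defect'' of the shadow. Since $v(H)$ and $e(H)$ are integers (and so are the numbers of vertices and edges of any induced subgraph), $q$ is an integer; by Corollary \ref{shadow-prop1} we have $q \geq 0$; and the hypothesis $\e_H(\sh_H(w)) < (k-1)\vert\sh_H(w)\vert$ is precisely the statement $q \geq 1$.

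Next I would invoke Lemma \ref{shadow-prop3}, which says that the left-hand side of the desired inequality is at most $q + 1$. Combining this with $q \geq 1$ gives
$$\sum_{\substack{s\in \sh_H(w)\\ \deg_H(s)\leq k-1}}(k-\deg_H(s)) \leq q + 1 \leq q + q = 2q = 2\bigl((k-1)\vert\sh_H(w)\vert - \e_H(\sh_H(w))\bigr),$$
which is exactly the claim.

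There is essentially no obstacle here: the only subtlety is recognizing that the hypothesis ``$<$'' upgrades to ``$q\geq 1$'' via integrality, which is what lets us absorb the additive $+1$ from Lemma \ref{shadow-prop3} into a multiplicative factor of $2$. (This mirrors the situation of Corollary \ref{shadow-prop4}, which handles the complementary case $q = 0$.) So the whole proof is a two-line argument citing Corollary \ref{shadow-prop1} and Lemma \ref{shadow-prop3}.
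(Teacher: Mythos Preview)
Your proof is correct and essentially identical to the paper's: the paper also observes that the hypothesis gives $(k-1)\vert\sh_H(w)\vert - \e_H(\sh_H(w)) \geq 1$ and then chains Lemma~\ref{shadow-prop3} with $q+1 \leq 2q$. The only cosmetic difference is that you name the quantity $q$ and explicitly mention integrality and Corollary~\ref{shadow-prop1}, whereas the paper leaves these implicit.
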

\begin{proof}We have $(k-1)\vert \sh_H(w)\vert-\e_H(\sh_H(w))\geq 1$ and hence by Lemma \ref{shadow-prop3}
$$\sum_{\substack{s\in \sh_H(w)\\ \deg_H(s)\leq k-1}}(k-\deg_H(s))\leq (k-1)\vert \sh_H(w)\vert-\e_H(\sh_H(w))+1\leq 2((k-1)\vert \sh_H(w)\vert-\e_H(\sh_H(w))).$$
\end{proof}

\begin{claim}\label{shadow-prop8} Let $w\in V_{\leq k-1}(H)$ with $\sh_H(w)\neq V(H)$. Then
$$V_{\leq k-1}(H-\sh_H(w))\su V_{\leq k-1}(H)\mi \lbrace w\rbrace.$$
\end{claim}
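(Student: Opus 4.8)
The plan is to unfold the definitions and invoke directly properties (I) and (III) of the shadow from Definition \ref{defi-shadow}. Write $Y=\sh_H(w)$, so $Y$ satisfies (I)--(IV). Since $\sh_H(w)\neq V(H)$, the graph $H-Y$ is non-empty and its vertex set is $V(H)\mi Y$. Take an arbitrary $v\in V_{\leq k-1}(H-Y)$; thus $v\in V(H)\mi Y$ and $\deg_{H-Y}(v)\leq k-1$. The goal is to show $v\in V_{\leq k-1}(H)$ and $v\neq w$.

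The second assertion is immediate: property (I) gives $w\in Y$, whereas $v\notin Y$, hence $v\neq w$.

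For the first assertion I would distinguish whether $v$ has a neighbour in $Y$ in the graph $H$. If it does, then property (III) of the shadow forces $\deg_{H-Y}(v)\geq k$, contradicting $\deg_{H-Y}(v)\leq k-1$; so this cannot happen. Hence $v$ has no neighbour in $Y$, which means deleting $Y$ does not change the degree of $v$, i.e. $\deg_H(v)=\deg_{H-Y}(v)\leq k-1$, so $v\in V_{\leq k-1}(H)$. Combining the two assertions gives $v\in V_{\leq k-1}(H)\mi\lbrace w\rbrace$, as desired.

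There is essentially no obstacle here: the statement follows purely from the defining properties of the shadow. The only piece of ``content'' is the observation that a vertex outside $Y$ whose degree is at most $k-1$ after deleting $Y$ cannot have been adjacent to $Y$ at all (by property (III)), so its degree in $H-Y$ coincides with its degree in $H$.
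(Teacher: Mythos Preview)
Your proof is correct and follows essentially the same approach as the paper: both argue that a vertex $v\in V_{\leq k-1}(H-\sh_H(w))$ cannot be adjacent to $\sh_H(w)$ by property (III), hence $\deg_H(v)=\deg_{H-\sh_H(w)}(v)\leq k-1$, and $v\neq w$ since $w\in\sh_H(w)$.
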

\begin{proof}From $\sh_H(w)\neq V(H)$ it is clear that $H-\sh_H(w)$ is a non-empty graph. Consider any vertex $v\in V_{\leq k-1}(H-\sh_H(w))$. If $v$ is adjacent to a vertex in $\sh_H(w)$, we have $\deg_{H-\sh_H(w)}(v)\geq k$ by property (III) in Definition \ref{defi-shadow}, a contradiction. Hence $v$ is not adjacent to a vertex in $\sh_H(w)$ and we have $\deg_{H}(v)=\deg_{H-\sh_H(w)}(v)\leq k-1$. Thus, $v\in V_{\leq k-1}(H)$. Furthermore $v\neq w$ since $v\not\in \sh_H(w)$.
\end{proof}

\begin{coro}\label{shadow-prop7} If $V_{\leq k-1}(H)=\lbrace w\rbrace$ and $\sh_H(w)\neq V(H)$, then $H-\sh_H(w)$ is a (non-empty) graph with minimum degree at least $k$.
\end{coro}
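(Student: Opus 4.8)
The plan is to read off the statement directly from Claim \ref{shadow-prop8}. The hypothesis $\sh_H(w)\neq V(H)$ immediately guarantees that $H-\sh_H(w)$ is a non-empty graph, so the only real content is the lower bound on the minimum degree.

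First I would note that $w\in V_{\leq k-1}(H)$ — this is part of the hypothesis, since $V_{\leq k-1}(H)=\lbrace w\rbrace$ — so $w$ is a legitimate input for the shadow construction and Claim \ref{shadow-prop8} applies to it. Applying that claim gives
$$V_{\leq k-1}(H-\sh_H(w))\su V_{\leq k-1}(H)\mi \lbrace w\rbrace=\lbrace w\rbrace\mi \lbrace w\rbrace=\es.$$
Hence $H-\sh_H(w)$ has no vertex of degree at most $k-1$; that is, every vertex of $H-\sh_H(w)$ has degree at least $k$, which is exactly the assertion, and we already observed that $H-\sh_H(w)$ is non-empty.

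There is essentially no obstacle here: the corollary is a one-line consequence of Claim \ref{shadow-prop8}, the only subtlety being the trivial observation that the set $V_{\leq k-1}(H)\mi\lbrace w\rbrace$ appearing on the right-hand side of the containment is empty under the present hypothesis. (If one preferred to argue without invoking Claim \ref{shadow-prop8}, one could re-derive the same conclusion directly from property (III) in Definition \ref{defi-shadow}: any $v\in V(H)\mi\sh_H(w)$ with $\deg_{H-\sh_H(w)}(v)\leq k-1$ cannot be adjacent to $\sh_H(w)$ by (III), hence $\deg_H(v)=\deg_{H-\sh_H(w)}(v)\leq k-1$, forcing $v=w$; but $w\in\sh_H(w)$ by property (I), a contradiction.)
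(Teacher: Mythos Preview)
Your proof is correct and is essentially identical to the paper's own argument: the paper's proof is the single sentence ``This follows immediately from Claim \ref{shadow-prop8},'' and you have simply unpacked that sentence. The additional parenthetical alternative via property (III) is also fine but unnecessary.
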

\begin{proof}This follows immediately from Claim \ref{shadow-prop8}.\end{proof}

\begin{lem}\label{shadow-prop6} Let $\tH$ be a graph containing $H$ as a proper induced subgraph, such that no vertex in $V(\tH)\mi V(H)$ is adjacent to any $D\in \C_H$. Let $w\in V_{\leq k-1}(\tH)\cap V(H)$ and suppose that $\e_H(\sh_H(w))= (k-1)\vert \sh_H(w)\vert$. Then $\sh_{\tH}(w)\su \sh_H(w)$ and no vertex in $V(\tH)\mi V(H)$ is adjacent to any vertex in $\sh_{\tH}(w)$.
\end{lem}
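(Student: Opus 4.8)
The plan is to run Procedure \ref{proc-shadow} for $\sh_{\tH}(w)$ inside $\tH$ (using the collection $\C_H$, whose members all lie in $V(H)\su V(\tH)$) and to track two invariants for the running set $Y$: (a)~$Y\su\sh_H(w)$, and (b)~no vertex of $V(\tH)\mi V(H)$ is adjacent to $Y$. First one should check that $\sh_{\tH}(w)$ is even well-defined, i.e.\ that $(\tH,\C_H)$ satisfies the hypotheses of Section \ref{sect4}: since no vertex of $V(\tH)\mi V(H)$ is adjacent to any $D\in\C_H$, every edge of $\tH$ incident with $D$ already lies in $H$, so $\e_{\tH}(D)=\e_H(D)\le(k-1)\vert D\vert+1$ and $\deg_{\tH}(v)=\deg_H(v)\ge k$ for $v\in D$. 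One also records at the outset that $w$ has no neighbour in $V(\tH)\mi V(H)$: from $w\in V_{\le k-1}(\tH)\cap V(H)$ we get $\deg_H(w)\le\deg_{\tH}(w)\le k-1$, and Corollary \ref{shadow-prop4} (applicable by the hypothesis $\e_H(\sh_H(w))=(k-1)\vert\sh_H(w)\vert$) gives $\deg_H(w)=k-1$, whence $\deg_{\tH}(w)=\deg_H(w)=k-1$. This settles the base case $Y=\lbrace w\rbrace$ of the invariants, since $w\in\sh_H(w)$ by property (I).

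For the inductive step I would additionally carry along the statement that all steps performed so far in $\tH$ are also legal steps of Procedure \ref{proc-shadow} in $H$. Suppose the current set $Y$ satisfies (a), (b) and this last property, and we perform one more step. If we add a set $D\in\C_H$ with $D\cap Y=\es$ adjacent to $Y$: since $D,Y\su V(H)$ this is also a legal step in $H$; property (IV) of $\sh_H(w)$ together with (a) gives $D\su\sh_H(w)$, so (a) is preserved; and (b) is preserved because no vertex of $V(\tH)\mi V(H)$ is adjacent to $Y$ (by (b)) or to $D$ (by hypothesis). If instead we add a vertex $v\notin Y$ with $v\notin D$ for all $D\in\C_H$, adjacent to $Y$, and $\deg_{\tH-Y}(v)\le k-1$: invariant (b) forces $v\in V(H)$, and then $\deg_{H-Y}(v)\le\deg_{\tH-Y}(v)\le k-1$, so this is a legal step in $H$ as well. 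Hence the run up to and including this step extends to a complete run of Procedure \ref{proc-shadow} in $H$, and Corollary \ref{shadow-prop2} applies to it, giving $\e_H(Y)=(k-1)\vert Y\vert$ both before and after adding $v$; this forces $\deg_{H-Y}(v)=k-1$ and therefore $\deg_{\tH-Y}(v)=k-1$ as well, so $v$ has no neighbour in $V(\tH)\mi V(H)$ and (b) holds for $Y\cup\lbrace v\rbrace$. Finally, if $v\notin\sh_H(w)$ then $v\in V(H)\mi\sh_H(w)$ is adjacent to $Y\su\sh_H(w)$, so property (III) of $\sh_H(w)$ gives $\deg_{H-\sh_H(w)}(v)\ge k$, contradicting $\deg_{H-Y}(v)\le k-1$ since $Y\su\sh_H(w)$; thus $v\in\sh_H(w)$ and (a) is preserved.

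When the procedure terminates we have $Y=\sh_{\tH}(w)$ by Claim \ref{proc-prop4} (applied in $\tH$), and invariants (a) and (b) for this final $Y$ give exactly $\sh_{\tH}(w)\su\sh_H(w)$ and the statement that no vertex of $V(\tH)\mi V(H)$ is adjacent to $\sh_{\tH}(w)$.

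I expect the delicate point to be invariant (b): a newly absorbed vertex $v$ could a priori have spare degree in $H-Y$ (i.e.\ $\deg_{H-Y}(v)<k-1$) that is taken up by neighbours lying outside $V(H)$, which would break (b) and the whole argument. This is precisely where the hypothesis $\e_H(\sh_H(w))=(k-1)\vert\sh_H(w)\vert$ enters: through Corollary \ref{shadow-prop2} it rigidly pins $\deg_{H-Y}(v)=k-1$ at every absorption step, leaving no room for such outside neighbours. Everything else is bookkeeping between $H$ and $\tH$ together with the defining properties (I)--(IV) of the shadow.
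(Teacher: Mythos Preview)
Your proposal is correct and follows essentially the same approach as the paper: run Procedure \ref{proc-shadow} in $\tH$ and maintain the two invariants (your (a), (b) are the paper's ($\beta$), ($\alpha$)), using Corollary \ref{shadow-prop2} at the critical point to rule out neighbours of a newly absorbed vertex $v$ in $V(\tH)\mi V(H)$. The only cosmetic differences are that you verify invariant (a) directly via properties (III)/(IV) of $\sh_H(w)$ rather than via the auxiliary ``legal-in-$H$'' invariant (which already gives (a) through Claim \ref{proc-prop1}), and that you phrase the key step as ``$\deg_{H-Y}(v)=k-1$ exactly'' whereas the paper computes the increment of $(k-1)\vert Y\vert-\e_H(Y)$ directly; these are equivalent.
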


Let us clarify that the shadow $\sh_{\tH}(w)$ of $w$ in $\tH$ is defined with respect to the same collection $\C_H$ of subsets of $V(H)$ that we use for $H$ (with all the properties described in the beginning of Section \ref{sect4}).

\begin{proof} Note that $w\in V_{\leq k-1}(\tH)\cap V(H)$ already implies $w\in V_{\leq k-1}(H)$, since $H$ is an induced subgraph of $\tH$. Thus, $\sh_H(w)$ is well defined. By Corollary \ref{shadow-prop4} we know $\deg_H(w)= k-1$. Hence we must have $\deg_{\tH}(w)=\deg_H(w)= k-1$ and in particular $w$ is not adjacent to any vertex in $V(\tH)\mi V(H)$.

Let us consider Procedure \ref{proc-shadow} for determining the shadow $\sh_{\tH}(w)$ of $w$ in $\tH$. If we have multiple options, let us fix some specific choices, so that we obtain some fixed procedure starting with $Y=\lbrace w\rbrace$ and arriving at $Y=\sh_{\tH}(w)$.

We claim that during this procedure for determining the shadow $\sh_{\tH}(w)$ of $w$ in $\tH$ all the sets $Y$ have the following two properties:
\begin{itemize}
\item[($\alpha$)] No vertex in $Y$ is adjacent to a vertex in $V(\tH)\mi V(H)$.
\item[($\beta$)] It is possible to arrange Procedure \ref{proc-shadow} for determining the shadow $\sh_H(w)$ of $w$ in $H$ in such a way, that the set $Y$ also occurs during this procedure for $H$.
\end{itemize}

Since $w$ is not adjacent to any vertex in $V(\tH)\mi V(H)$, the set $Y=\lbrace w\rbrace$ in the beginning satisfies ($\alpha$) and it clearly also satisfies ($\beta$).

Now let $Y$ be any set occurring in the procedure for determining the shadow $\sh_{\tH}(w)$ of $w$ in $\tH$ and assume that $Y$ fulfills ($\alpha$) and ($\beta$). We want to show that after the next step, following the rules in Procedure \ref{proc-shadow} for the graph $\tH$, the next set still has the properties ($\alpha$) and ($\beta$).

Note that by ($\beta$) we in particular have $Y\su \sh_H(w)\su V(H)$.

Suppose the next step is adding some set $D\in \C_H$ with $D\cap Y=\es$, such that $D$ is adjacent to some vertex in $Y$. This step is definitely an allowed step in the procedure for the graph $H$ as well, so ($\beta$) is satisfied for $Y\cup D$. Since $Y$ satisfies ($\alpha$) and $D$ is not adjacent to any vertex in $V(\tH)\mi V(H)$ (by the assumptions of the lemma), the set $Y\cup D$ also satisfies ($\alpha$). So in this case we are done.

So we can assume that the next step is adding a vertex $v\in V(\tH)\mi Y$ with $v\not\in D$ for all $D\in \C_H$, such that $v$ is adjacent to a vertex in $Y$ and $\deg_{\tH-Y}(v)\leq k-1$. Since $Y$ is according to ($\alpha$) not adjacent to any vertex in $V(\tH)\mi V(H)$, we must have $v\in V(H)$. So $v\in V(H)\mi Y$ and $v$ is adjacent to a vertex in $Y$. Furthermore, $\deg_{H-Y}(v)\leq \deg_{\tH-Y}(v)\leq k-1$ (since $H-Y$ is an induced subgraph of $\tH-Y$). Thus, adding $v$ to $Y$ is also an allowed step in the procedure for determining the shadow $\sh_H(w)$ of $w$ in $H$. In particular, ($\beta$) is satisfied for $Y\cup \lbrace v\rbrace$ and it remains to show ($\alpha$) for $Y\cup \lbrace v\rbrace$.

Suppose that $v$ is adjacent to a vertex in $V(\tH)\mi V(H)$. Since $Y\su V(H)$, this vertex also lies in $V(\tH-Y)\mi V(H-Y)$. Hence we have $\deg_{H-Y}(v)\leq \deg_{\tH-Y}(v)-1\leq k-2$. But then
$$\e_H(Y\cup \lbrace v\rbrace)=\e_H(Y)+\e_{H-Y}(\lbrace v\rbrace)=\e_H(Y)+\deg_{H-Y}(v)\leq \e_H(Y)+(k-2),$$
and therefore
$$(k-1)\vert Y\cup \lbrace v\rbrace\vert-\e_H(Y\cup \lbrace v\rbrace)\geq (k-1)(\vert Y\vert+1)-(\e_H(Y)+(k-2))=(k-1)\vert Y\vert-\e_H(Y)+1,$$
so the quantity $(k-1)\vert Y\vert-\e_H(Y)$ increases by at least 1 when going from $Y$ to $Y\cup \lbrace v\rbrace$ in the procedure for determining the shadow $\sh_H(w)$ of $w$ in $H$. But this contradicts Corollary \ref{shadow-prop2}. Hence $v$ cannot be adjacent to a vertex in $V(\tH)\mi V(H)$. Since $Y$ satisfies ($\alpha$), this implies that $Y\cup \lbrace v\rbrace$ satisfies ($\alpha$) as well.

This finishes the proof that all sets $Y$ occurring in the procedure for determining the shadow $\sh_{\tH}(w)$ of $w$ in $\tH$ satisfy the two properties ($\alpha$) and ($\beta$).

In particular, the final set $Y=\sh_{\tH}(w)$ satisfies ($\alpha$) and ($\beta$). By ($\beta$) the set $\sh_{\tH}(w)$ is a possible set during the procedure of determining the shadow $\sh_{H}(w)$ of $w$ in $H$. Hence $\sh_{\tH}(w)\su \sh_{H}(w)$. By ($\alpha$) no vertex in $\sh_{\tH}(w)$ is adjacent to a vertex in $V(\tH)\mi V(H)$.\end{proof}

\section{Proof of Lemma \ref{mainlem}} \label{sect5}

The goal of this section is to finally prove Lemma \ref{mainlem}. The proof proceeds by induction on $v(H)$. We use the results from Section \ref{sect4}, but otherwise the inductive proof of Lemma \ref{mainlem} is mostly a long case-checking.

First, if $v(H)=1$, then $H$ just consists of a single vertex $w$ and no edges. Note that indeed $H$ does not contain a subgraph of minimum degree at least $k$. The collection $\C_H$ must be empty, because $\deg_H(w)<k$, so $w$ cannot be part of any member of $\C_H$. We can now take $S=\lbrace w\rbrace$, then $S\su V_{\leq k-1}(H)$ and $V_{\leq k-2}(H)\su S$ and
$$\sum_{s\in S}(k-\deg_H(s))=k\leq 2(k-1)= 2((k-1)v(H)-e(H))$$
(recall that $k\geq 2$). Since $V_{\leq k-1}(H)\mi S=\es$, we do not need to specify any sets $B_v$. Now if $\tH$ is a graph containing $H$ as a proper induced subgraph such that $V_{\leq k-1}(\tH)\su V_{\leq k-1}(H)\mi S=\es$, then $\tH$ itself has minimum degree at least $k$ and we can take $\tH'=\tH$. It is easy to see that in this case $\tH'=\tH$ satisfies the six properties in Lemma \ref{mainlem}.

Now let $H$ be a graph on $v(H)\geq 2$ vertices that does not have a subgraph of minimum degree at least $k$. Furthermore let $\C_H$ be a collection of disjoint non-empty subsets of $V(H)$, such that for each $D\in\C_H$ we have $\e_H(D)\leq (k-1)\vert D\vert+1$ and $\deg_H(v)\geq k$ for each $v\in D$.

By induction we can assume that Lemma \ref{mainlem} holds for all graphs on less than $v(H)$ vertices, and we have to prove Lemma \ref{mainlem} for $H$.

Since $H$ does not have a subgraph of minimum degree at least $k$, we know in particular that the minimum degree of $H$ itself is less than $k$. So we can fix a vertex $w\in V(H)$ with $\deg_H(w)\leq k-1$.

Consider the shadow $\sh_H(w)$ of $w$ in $H$. We distinguish two cases, namely whether $\sh_H(w)=V(H)$ (Case A) or whether $\sh_H(w)\subsetneq V(H)$ (Case B).

\textbf{Case A: $\sh_H(w)=V(H)$.}

By Corollary \ref{shadow-prop1} we have $\e_H(\sh_H(w))\leq (k-1)\vert \sh_H(w)\vert$. We distinguish two sub-cases, namely whether $\e_H(\sh_H(w))< (k-1)\vert \sh_H(w)\vert$ (Case A.1) or whether $\e_H(\sh_H(w))= (k-1)\vert \sh_H(w)\vert$ (Case A.2).

\textbf{Case A.1: $\sh_H(w)=V(H)$ and $\e_H(\sh_H(w))< (k-1)\vert \sh_H(w)\vert$.}

In this case we can take $S=V_{\leq k-1}(H)$, then clearly $S\su V_{\leq k-1}(H)$ and $ V_{\leq k-2}(H)\su S$ and by Corollary \ref{shadow-prop5}
\begin{multline*}
\sum_{s\in S}(k-\deg_H(s))=\sum_{\substack{s\in \sh_H(w)\\ \deg_H(s)\leq k-1}}(k-\deg_H(s))\leq 2((k-1)\vert \sh_H(w)\vert-\e_H(\sh_H(w)))\\
=2((k-1)v(H)-e(H)).
\end{multline*}
Since $V_{\leq k-1}(H)\mi S=\es$, we do not need to specify any sets $B_v$.
Now if $\tH$ is a graph containing $H$ as a proper induced subgraph such that $V_{\leq k-1}(\tH)\su V_{\leq k-1}(H)\mi S=\es$, then $\tH$ itself has minimum degree at least $k$ and we can take $\tH'=\tH$. It is easy to see that in this case $\tH'=\tH$ satisfies the six properties in Lemma \ref{mainlem}.

\textbf{Case A.2: $\sh_H(w)=V(H)$ and $\e_H(\sh_H(w))=(k-1)\vert \sh_H(w)\vert$.}

By Corollary \ref{shadow-prop4} the vertex $w$ is the only vertex $v\in \sh_H(w)=V(H)$ with $\deg_H(v)\leq k-1$ and furthermore $\deg_H(w)= k-1$. Thus, $V_{\leq k-1}(H)=\lbrace w\rbrace$ and $V_{\leq k-2}(H)=\es$.
Let us take $S=\es$, then clearly $S\su V_{\leq k-1}(H)$ with $ V_{\leq k-2}(H)\su S$ and
$$\sum_{s\in S}(k-\deg_H(s))=0=2((k-1)\vert \sh_H(w)\vert-\e_H(\sh_H(w)))= 2((k-1)v(H)-e(H)).$$
Now $V_{\leq k-1}(H)\mi S=\lbrace w\rbrace$ and let us take $B_w=\sh_H(w)=V(H)$.

Let $\tH$ be a graph containing $H$ as a proper induced subgraph with $V_{\leq k-1}(\tH) \su {V_{\leq k-1}(H)\mi S} = \lbrace w\rbrace$ and such that no vertex in $V(\tH)\mi V(H)$ is adjacent to any member of $\C_H$. We have to show the existence of an induced subgraph $\tH'$ of $\tH$ satisfying the properties (a) to (f) in Lemma \ref{mainlem}.

If $\tH$ itself has minimum degree at least $k$, then we can take $\tH'=\tH$ and all the properties are satisfied.

So let us assume that $\tH$ has minimum degree smaller than $k$. By $V_{\leq k-1}(\tH)\su V_{\leq k-1}(H)\mi S=\lbrace w\rbrace$ this implies $V_{\leq k-1}(\tH)=\lbrace w\rbrace$. We can apply Lemma \ref{shadow-prop6}, so $\sh_{\tH}(w)\su \sh_H(w)$ and no vertex in $V(\tH)\mi V(H)$ is adjacent to any vertex in $\sh_{\tH}(w)$. Recall that here the shadow $\sh_{\tH}(w)$ of $w$ in $\tH$ is defined with respect to the same collection $\C_H$ as for $H$.

Let us take $\tH'=\tH-\sh_{\tH}(w)$. Note that $\sh_{\tH}(w)\su \sh_H(w)=V(H)$ and therefore $\sh_{\tH}(w)\neq V(\tH)$. By Corollary \ref{shadow-prop7} applied to $\tH$ and $w$, the graph $\tH'=\tH-\sh_{\tH}(w)$ is a non-empty induced subgraph of $\tH$ and has minimum degree at least $k$. This already establishes property (a). Let us check the other properties:
\begin{itemize}
\item[(b)] By Corollary \ref{shadow-prop1} applied to $\tH$ and $w$ we have $\e_{\tH}(\sh_{\tH}(w))\leq (k-1)\vert \sh_{\tH}(w)\vert$. Hence
$$(k-1)v(\tH')-e(\tH')=(k-1)(v(\tH)-\vert \sh_{\tH}(w)\vert)-(e(\tH)-\e_{\tH}(\sh_{\tH}(w)))\leq (k-1)v(\tH)-e(\tH).$$
\item[(c)] $V(\tH)\mi V(\tH')=\sh_{\tH}(w)\su \sh_H(w)=B_w=\bigcup_{v\in V_{\leq k-1}(\tH)}B_v$.
\item[(d)] As established above from Lemma \ref{shadow-prop6}, no vertex in $V(\tH)\mi V(\tH')=\sh_{\tH}(w)$ is adjacent to any vertex in $V(\tH)\mi V(H)$.
\item[(e)] For each $D\in \C_H$, by property (II) in Definition \ref{defi-shadow}  we either have $D\su \sh_{\tH}(w)$ or we have $D\cap \sh_{\tH}(w)=\es$. Thus, either $D\cap V(\tH')=\es$ or $D\su V(\tH')$.
\item[(f)] If $D\in \C_H$ and $D\su V(\tH')$, then $D\cap \sh_{\tH}(w)=\es$. So by property (IV) in Definition \ref{defi-shadow} the set $D$ is not adjacent to any vertex in $\sh_{\tH}(w)=V(\tH)\mi V(\tH')$.
\end{itemize}

\textbf{Case B: $\sh_H(w)\subsetneq V(H)$.}

In this case $F=H-\sh_H(w)$ is a non-empty subgraph of $H$ and $F$ has fewer vertices than $H$. Note that $F$ does not have a subgraph of minimum degree at least $k$, since according to the assumptions of Lemma \ref{mainlem} the graph $H$ has no such subgraph.

Let $\C_F$ be the collection of those $D\in \C_H$ with $D\su V(F)$, i.e.
\begin{equation}\label{F-collection}
\C_F=\lbrace D\in \C_H\,\vert\,D\cap \sh_H(w)=\es\rbrace=\lbrace D\in \C_H\,\vert\,D\not\su\sh_H(w)\rbrace,
\end{equation}
using that $\sh_H(w)$ has property (II) from Definition \ref{defi-shadow}.

Clearly $\C_F$ is a collection of disjoint non-empty subsets of $V(F)$ and for each $D\in \C_F$ we have
$$\e_F(D)\leq \e_H(D)\leq (k-1)\vert D\vert+1.$$

Furthermore for each $D\in \C_F\su \C_H$ we have $D\cap \sh_H(w)=\es$ and by property (IV) from Definition \ref{defi-shadow} this implies that $D$ is not adjacent to any vertex in $\sh_H(w)$. Hence for every $v\in D$ we have
$$\deg_F(v)=\deg_{H-\sh_H(w)}(v)=\deg_{H}(v)\geq k.$$

Thus, the graph $F$ together with the collection $\C_F$ of subsets of $V(F)$ satisfies the assumptions of Lemma \ref{mainlem}. By the induction assumption, we can  apply Lemma \ref{mainlem} to $F$ (with $\C_F$). We find a subset $S_F\su V_{\leq k-1}(F)$ with $ V_{\leq k-2}(F)\su S_F$ and
\begin{equation}\label{F-SF-ineq}
\sum_{s\in S_F}(k-\deg_F(s))\leq 2((k-1)v(F)-e(F))
\end{equation}
as well as disjoint subsets $B_v\su V(F)$ for each vertex $v\in V_{\leq k-1}(F)\mi S_F=V_{k-1}(F)\mi S_F$, such that the conclusion of Lemma \ref{mainlem} holds.

Note that
\begin{multline}\label{F-edge-defect}
(k-1)v(F)-e(F)=(k-1)(v(H)-\vert \sh_{H}(w)\vert)-(e(H)-\e_{H}(\sh_{H}(w)))\\
=(k-1)v(H)-e(H)-((k-1)\vert \sh_{H}(w)\vert-\e_{H}(\sh_{H}(w))).
\end{multline}

As $F$ is an induced subgraph of $H$, for all $v\in V(F)$ we have
\begin{equation}\label{F-deg-H}
\deg_F(v)\leq \deg_H(v).
\end{equation}

\begin{claim}\label{F-k-1-set} $V_{\leq k-1}(H)=V_{\leq k-1}(F)\cup \lbrace v\in \sh_H(w)\,\vert\, \deg_H(v)\leq k-1\rbrace$.
\end{claim}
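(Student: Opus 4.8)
The plan is to prove the set equality in Claim~\ref{F-k-1-set} by double inclusion, relying on the fact that $F = H - \sh_H(w)$ is an induced subgraph of $H$ together with property (III) of the shadow from Definition~\ref{defi-shadow}. The main point to keep straight is that the two sets on the right-hand side are disjoint (one lives outside $\sh_H(w)$, the other inside it), so there is no overlap to worry about.

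\emph{First inclusion ($\supseteq$).} Suppose $v \in V_{\leq k-1}(F)$. Since $F$ is an induced subgraph of $H$, every neighbour of $v$ in $H$ that lies in $V(F)$ is also a neighbour of $v$ in $F$, so $\deg_H(v) \geq \deg_F(v)$; but I actually need the reverse direction here. The cleaner way: by Claim~\ref{shadow-prop8} (applicable since we are in Case B, where $\sh_H(w) \neq V(H)$) we have $V_{\leq k-1}(H - \sh_H(w)) \su V_{\leq k-1}(H)$, which is exactly $V_{\leq k-1}(F) \su V_{\leq k-1}(H)$. For the second piece, if $v \in \sh_H(w)$ with $\deg_H(v) \leq k-1$, then trivially $v \in V_{\leq k-1}(H)$. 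So the right-hand side is contained in $V_{\leq k-1}(H)$.

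\emph{Second inclusion ($\su$).} Let $v \in V_{\leq k-1}(H)$, so $\deg_H(v) \leq k-1$. If $v \in \sh_H(w)$, then $v$ belongs to the set $\lbrace v \in \sh_H(w) \,\vert\, \deg_H(v) \leq k-1\rbrace$ and we are done. Otherwise $v \in V(H) \mi \sh_H(w) = V(F)$. I claim $v$ has no neighbour in $\sh_H(w)$: if it did, property (III) in Definition~\ref{defi-shadow} would force $\deg_{H - \sh_H(w)}(v) \geq k$, but since $v$ has a neighbour in $\sh_H(w)$, $\deg_{H - \sh_H(w)}(v) \leq \deg_H(v) - 1 \leq k-2 < k$, a contradiction. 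Hence all neighbours of $v$ in $H$ lie in $V(F)$, so $\deg_F(v) = \deg_H(v) \leq k-1$, giving $v \in V_{\leq k-1}(F)$.

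Combining the two inclusions yields the claimed equality. I do not expect any real obstacle here: the only subtlety is invoking property (III) of the shadow in the right direction (a vertex outside the shadow with a neighbour inside it would have to have degree $\geq k$ after deleting the shadow, which contradicts having low degree in $H$), and this is precisely the kind of argument already used in the proof of Claim~\ref{shadow-prop8}. In fact this claim is essentially a restatement of Claim~\ref{shadow-prop8} made into an exact partition, so the write-up should be short.
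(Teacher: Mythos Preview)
Your proof is correct and follows essentially the same approach as the paper: both use Claim~\ref{shadow-prop8} for the inclusion $\supseteq$ and a case split on whether $v\in\sh_H(w)$ for $\subseteq$. The only difference is that in the $\subseteq$ direction the paper simply uses $\deg_F(v)\leq\deg_H(v)$ (since $F$ is induced in $H$) rather than your stronger observation that $\deg_F(v)=\deg_H(v)$ via property~(III); your argument is sound but slightly more than is needed.
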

\begin{proof}First, let $v$ be an element of the left-hand-side, i.e.\ $v\in V(H)$ and $\deg_H(v)\leq k-1$. If $v\in \sh_H(w)$, then $v$ is clearly contained in the right-hand side. Otherwise $v\in V(F)$ and $\deg_F(v)\leq \deg_H(v)\leq k-1$ by (\ref{F-deg-H}), hence $v\in V_{\leq k-1}(F)$.

For the other inclusion note that obviously
$$\lbrace v\in \sh_H(w)\,\vert\, \deg_H(v)\leq k-1\rbrace\su V_{\leq k-1}(H)$$
and that
$$V_{\leq k-1}(F)=V_{\leq k-1}(H-\sh_H(w))\su V_{\leq k-1}(H)$$
by Claim \ref{shadow-prop8}.\end{proof}

\begin{claim}\label{F-C-set} No vertex in $V(H)\mi V(F)$ is adjacent to any member of $\C_F$.
\end{claim}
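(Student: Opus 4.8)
The plan is to unwind the definition of $F$ and then invoke property (IV) of the shadow. Since $F=H-\sh_H(w)$, the vertex set $V(H)\mi V(F)$ is exactly $\sh_H(w)$. So the claim to prove is that no vertex in $\sh_H(w)$ is adjacent to any member of $\C_F$.

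First I would fix an arbitrary $D\in\C_F$. By the description of $\C_F$ in \eqref{F-collection} we have $D\in\C_H$ and $D\cap\sh_H(w)=\es$; since $D$ is non-empty, this in particular gives $D\not\su\sh_H(w)$. Now suppose, for contradiction, that some vertex of $\sh_H(w)$ is adjacent to $D$, i.e.\ $D$ is adjacent to a vertex in $\sh_H(w)$. Then property (IV) in Definition \ref{defi-shadow} applied with $Y=\sh_H(w)$ forces $D\su\sh_H(w)$, contradicting $D\cap\sh_H(w)=\es$. Hence $D$ is not adjacent to any vertex in $\sh_H(w)=V(H)\mi V(F)$, and since $D\in\C_F$ was arbitrary, this proves the claim.

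There is essentially no obstacle here: the statement is an immediate consequence of the defining property (IV) of $\sh_H(w)$ together with the way $\C_F$ was chosen in \eqref{F-collection} to consist precisely of those members of $\C_H$ disjoint from $\sh_H(w)$. The only thing to be careful about is to note explicitly that $D$ is non-empty, so that $D\cap\sh_H(w)=\es$ genuinely rules out $D\su\sh_H(w)$.
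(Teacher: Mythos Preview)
Your proof is correct and follows essentially the same route as the paper: both fix $D\in\C_F$, use \eqref{F-collection} to get $D\cap\sh_H(w)=\es$, and then apply property (IV) of the shadow to conclude that $D$ is not adjacent to any vertex in $\sh_H(w)=V(H)\mi V(F)$. Your extra remark that $D$ is non-empty (so $D\cap\sh_H(w)=\es$ really excludes $D\su\sh_H(w)$) is a fine point of care, though the paper leaves it implicit.
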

\begin{proof}Let $D\in \C_F$, then by (\ref{F-collection}) we have $D\cap\sh_H(w)=\es$. Since $\sh_H(w)$ has property (IV) in Definition \ref{defi-shadow}, this implies that $D$ is not adjacent to any vertex in $\sh_H(w)=V(H)\mi V(F)$.
\end{proof}

By Corollary \ref{shadow-prop1} we again have $\e_H(\sh_H(w))\leq (k-1)\vert \sh_H(w)\vert$. As before, let us distinguish two sub-cases, namely whether $\e_H(\sh_H(w))< (k-1)\vert \sh_H(w)\vert$ (Case B.1) or whether $\e_H(\sh_H(w))= (k-1)\vert \sh_H(w)\vert$ (Case B.2).

\textbf{Case B.1: $\sh_H(w)\subsetneq V(H)$ and $\e_H(\sh_H(w))< (k-1)\vert \sh_H(w)\vert$.}

In this case let us take
$$S_H=S_F\cup \lbrace v\in \sh_H(w)\,\vert\, \deg_H(v)\leq k-1\rbrace.$$
Since $S_F\su V_{\leq k-1}(F)\su V_{\leq k-1}(H)$ (see Claim \ref{F-k-1-set}), it is clear that $S_H\su V_{\leq k-1}(H)$.

Let us check $V_{\leq k-2}(H)\su S_H$: Let $v\in V(H)$ with $\deg_H(v)\leq k-2$. If $v\in \sh_H(w)$, then we clearly have $v\in S_H$. Otherwise $v\in V(F)$ and by (\ref{F-deg-H}) we have $\deg_F(v)\leq \deg_H(v)\leq k-2$, so $v\in V_{\leq k-2}(F)\su S_F\su S_H$. Thus, indeed $V_{\leq k-2}(H)\su S_H$.

Furthermore note that
$$\sum_{s\in S_H}(k-\deg_H(s))=\sum_{s\in S_F}(k-\deg_H(s))+\sum_{\substack{s\in \sh_H(w)\\ \deg_H(s)\leq k-1}}(k-\deg_H(s)).$$
By (\ref{F-deg-H}) and Corollary \ref{shadow-prop5} (note that the assumption of this corollary is fulfilled by the assumption of Case B.1) we obtain
$$\sum_{s\in S_H}(k-\deg_H(s))\leq\sum_{s\in S_F}(k-\deg_F(s))+2((k-1)\vert \sh_H(w)\vert-\e_H(\sh_H(w))).$$
Plugging in (\ref{F-SF-ineq}) yields
$$\sum_{s\in S_H}(k-\deg_H(s))\leq 2((k-1)v(F)-e(F))+2((k-1)\vert \sh_H(w)\vert-\e_H(\sh_H(w))),$$
and, together with (\ref{F-edge-defect}), this gives
$$\sum_{s\in S_H}(k-\deg_H(s))\leq 2((k-1)v(H)-e(H)).$$

All in all, $S_H\su V_{\leq k-1}(H)$ has all the desired properties to act as the set $S$ for $H$.

Furthermore note that by Claim \ref{F-k-1-set} we have
\begin{equation}\label{F-S-difference-1}
V_{\leq k-1}(H)\mi S_H=V_{\leq k-1}(F)\mi S_F.
\end{equation}
Thus, for each $v\in V_{\leq k-1}(H)\mi S_H=V_{\leq k-1}(F)\mi S_F$ we already have a set $B_v\su V(F)\su V(H)$ coming from the application of Lemma \ref{mainlem} to $F$. We can just keep those sets $B_v\su V(H)$ for each $v\in V_{\leq k-1}(H)\mi S_H=V_{\leq k-1}(F)\mi S_F$. They are still disjoint.

Now let $\tH$ be a graph containing $H$ as a proper induced subgraph with $V_{\leq k-1}(\tH)\su V_{\leq k-1}(H)\mi S_H$ such that no vertex in $V(\tH)\mi V(H)$ is adjacent to any member of $\C_H$. We need to find a (non-empty) induced subgraph $\tH'$ of $\tH$ with the properties (a) to (f) listed in Lemma \ref{mainlem}.

Note that $\tH$ also contains $F$ as a proper induced subgraph. By (\ref{F-S-difference-1}) we have 
$$V_{\leq k-1}(\tH)\su V_{\leq k-1}(H)\mi S_H=V_{\leq k-1}(F)\mi S_F.$$
Furthermore no vertex in $V(\tH)\mi V(H)$ is adjacent to any member of $\C_F\su \C_H$ and by Claim \ref{F-C-set} no vertex in $V(H)\mi V(F)$ is adjacent to any member of $\C_F$. Thus, no vertex in $V(\tH)\mi V(F)$ is adjacent to any member of $\C_F$.

So the graph $\tH$ satisfies all conditions in Lemma \ref{mainlem} for $F$ (together with the collection $\C_F$). So by the conclusion of Lemma \ref{mainlem} for $F$ we can find a (non-empty) induced subgraph $\tH'$ of $\tH$ with the following six properties (these are the properties (a) to (f) but with respect to $F$ instead of $H$):
\begin{itemize}
\item[(a$_F$)] The minimum degree of $\tH'$ is at least $k$.
\item[(b$_F$)] $(k-1)v(\tH')-e(\tH')\leq (k-1)v(\tH)-e(\tH)$.
\item[(c$_F$)] $V(\tH)\mi V(\tH')\su \bigcup_{v\in V_{\leq k-1}(\tH)}B_v$, so in particular $V(\tH)\mi V(\tH')\su V(F)$.
\item[(d$_F$)] No vertex in $V(\tH)\mi V(\tH')$ is adjacent to any vertex in $V(\tH)\mi V(F)$.
\item[(e$_F$)] For each $D\in \C_F$ either $D\su V(\tH')$ or $D\cap V(\tH')=\es$.
\item[(f$_F$)] If $D\in \C_F$ and $D\su V(\tH')$, then $D$ is not adjacent to any vertex in $V(\tH)\mi V(\tH')$.
\end{itemize}
We have to show that there is an induced subgraph of $\tH$ fulfilling the six properties (a) to (f) in Lemma \ref{mainlem} (with respect to $H$ and the collection $\C_H$). Let us just take the (non-empty) induced subgraph $\tH'$ of $\tH$ with the properties (a$_F$) to (f$_F$) above. Let us check that this same graph $\tH'$ also satisfies (a) to (f). For (a) and (b) this is clear, because they are identical with (a$_F$) and (b$_F$). Property (c) is identical with property (c$_F$) as well (recall that the sets $B_v$ are by definition the same for $F$ and for $H$ and $B_v\su V(F)\su V(H)$). Property (d) follows directly from property (d$_F$),  because $V(\tH)\mi V(H)\su V(\tH)\mi V(F)$. It remains to check properties (e) and (f):
\begin{itemize}
\item[(e)] Let $D\in \C_H$ and we have to show that $D\su V(\tH')$ or $D\cap V(\tH')=\es$. If $D\in \C_F$, this is clear from property (e$_F$). Otherwise, by (\ref{F-collection}) we have $D\su \sh_H(w)=V(H)\mi V(F)\su V(\tH)\mi V(F)$. Since $V(\tH)\mi V(\tH')\su V(F)$ by (c$_F$), this implies $D\su V(\tH')$.
\item[(f)] Let $D\in \C_H$ and $D\su V(\tH')$. We have to show that $D$ is not adjacent to any vertex in $V(\tH)\mi V(\tH')$. If $D\in \C_F$, this is clear from property (f$_F$). Otherwise,  by (\ref{F-collection}) we have $D\su \sh_H(w)=V(H)\mi V(F)\su V(\tH)\mi V(F)$. So by (d$_F$) the set $D$ is not adjacent to any vertex in $V(\tH)\mi V(\tH')$.
\end{itemize}

\textbf{Case B.2: $\sh_H(w)\subsetneq V(H)$ and $\e_H(\sh_H(w))= (k-1)\vert \sh_H(w)\vert$.}

By Corollary \ref{shadow-prop4} the vertex $w$ is the only vertex $v\in \sh_H(w)$ with $\deg_H(v)\leq k-1$ and furthermore $\deg_H(w)= k-1$. In particular, Claim \ref{F-k-1-set} implies
\begin{equation}\label{F-k-1-set2}
V_{\leq k-1}(H)=V_{\leq k-1}(F)\cup \lbrace w\rbrace.
\end{equation}

Let us take $S_H=S_F$.

Then $S_H=S_F\su V_{\leq k-1}(F)\su V_{\leq k-1}(H)$ by (\ref{F-k-1-set2}).

Let us check $V_{\leq k-2}(H)\su S_H$: Let $v\in V(H)$ with $\deg_H(v)\leq k-2$. Then clearly $v\in V_{\leq k-1}(H)$ and furthermore $v\neq w$ since $\deg_H(w)= k-1$. Thus, (\ref{F-k-1-set2}) implies $v\in V_{\leq k-1}(F)$ and by (\ref{F-deg-H}) we have $\deg_F(v)\leq \deg_H(v)\leq k-2$. Thus, $v\in V_{\leq k-2}(F)\su S_F=S_H$.

By (\ref{F-edge-defect}) the assumption $\e_H(\sh_H(w))= (k-1)\vert \sh_H(w)\vert$ of Case B.2 implies
$$(k-1)v(F)-e(F)=(k-1)v(H)-e(H).$$
Now (\ref{F-SF-ineq}) reads
$$\sum_{s\in S_F}(k-\deg_F(s))\leq 2((k-1)v(H)-e(H))$$
and by (\ref{F-deg-H}) it implies
$$\sum_{s\in S_H}(k-\deg_H(s))=\sum_{s\in S_F}(k-\deg_H(s))\leq \sum_{s\in S_F}(k-\deg_F(s))\leq 2((k-1)v(H)-e(H)).$$

All in all, $S_H\su V_{\leq k-1}(H)$ has all the desired properties to act as the set $S$ for $H$.

Furthermore note that by (\ref{F-k-1-set2}) we have
\begin{equation}\label{F-S-difference-2}
V_{\leq k-1}(H)\mi S_H=(V_{\leq k-1}(F)\mi S_F)\cup \lbrace w\rbrace.
\end{equation}
For each $v\in V_{\leq k-1}(F)\mi S_F$ we already have a set $B_v\su V(F)\su V(H)$ coming from the application of Lemma \ref{mainlem} to $F$. We can just keep those sets $B_v\su V(H)$ for each $v\in V_{\leq k-1}(F)\mi S_F$. They are still disjoint. For $v=w$ let us set $B_w=\sh_H(w)\su V(H)$. Since $B_v\su V(F)=V(H)\mi \sh_H(w)$ for $v\in V_{\leq k-1}(F)\mi S_F$, the set $B_w$ is disjoint from all the other $B_v$. So this defines disjoint sets $B_v\in V(H)$ for each $v\in V_{\leq k-1}(H)\mi S_H$.

Now let $\tH$ be a graph containing $H$ as a proper induced subgraph with $V_{\leq k-1}(\tH)\su V_{\leq k-1}(H)\mi S_H$ such that no vertex in $V(\tH)\mi V(H)$ is adjacent to any member of $\C_H$. We need to find a (non-empty) induced subgraph $\tH'$ of $\tH$ with the properties (a) to (f) listed in Lemma \ref{mainlem}.

In order to find a suitable $\tH'$, we distinguish two cases again: $w\not\in V_{\leq k-1}(\tH)$ (Case B.2.a) or $w\in V_{\leq k-1}(\tH)$ (Case B.2.b).

\textbf{Case B.2.a: $\sh_H(w)\subsetneq V(H)$, $\e_H(\sh_H(w))= (k-1)\vert \sh_H(w)\vert$ and $w\not\in V_{\leq k-1}(\tH)$.}

Since $\tH$ contains $H$ as a proper induced subgraph, it also contains $F$ as a proper induced subgraph. By (\ref{F-S-difference-2}) we have 
$$V_{\leq k-1}(\tH)\su V_{\leq k-1}(H)\mi S_H=(V_{\leq k-1}(F)\mi S_F)\cup \lbrace w\rbrace$$
and by the assumption $w\not\in V_{\leq k-1}(\tH)$ of Case B.2.a this implies $V_{\leq k-1}(\tH)\su V_{\leq k-1}(F)\mi S_F$.

Furthermore no vertex in $V(\tH)\mi V(H)$ is adjacent to any member of $\C_F\su \C_H$ and by Claim \ref{F-C-set} no vertex in $V(H)\mi V(F)$ is adjacent to any member of $\C_F$. Thus, no vertex in $V(\tH)\mi V(F)$ is adjacent to any member of $\C_F$.

So the graph $\tH$ satisfies all conditions in Lemma \ref{mainlem} for $F$ (together with the collection $\C_F$). So by the conclusion of Lemma \ref{mainlem} for $F$ we can find a (non-empty) induced subgraph $\tH'$ of $\tH$ with the following six properties (these are again the properties (a) to (f) but with respect to $F$ instead of $H$):
\begin{itemize}
\item[(a$_F$)] The minimum degree of $\tH'$ is at least $k$.
\item[(b$_F$)] $(k-1)v(\tH')-e(\tH')\leq (k-1)v(\tH)-e(\tH)$.
\item[(c$_F$)] $V(\tH)\mi V(\tH')\su \bigcup_{v\in V_{\leq k-1}(\tH)}B_v$, so in particular $V(\tH)\mi V(\tH')\su V(F)$.
\item[(d$_F$)] No vertex in $V(\tH)\mi V(\tH')$ is adjacent to any vertex in $V(\tH)\mi V(F)$.
\item[(e$_F$)] For each $D\in \C_F$ either $D\su V(\tH')$ or $D\cap V(\tH')=\es$.
\item[(f$_F$)] If $D\in \C_F$ and $D\su V(\tH')$, then $D$ is not adjacent to any vertex in $V(\tH)\mi V(\tH')$.
\end{itemize}
We have to show that there is an induced subgraph of $\tH$ fulfilling the six properties (a) to (f) in Lemma \ref{mainlem} (with respect to $H$ and the collection $\C_H$). Let us just take the (non-empty) induced subgraph $\tH'$ of $\tH$ with the properties (a$_F$) to (f$_F$) above. Let us check that this same graph $\tH'$ also satisfies (a) to (f). For (a) and (b) this is clear, because they are identical with (a$_F$) and (b$_F$). Property (c) is identical with property (c$_F$) as well (recall that the sets $B_v$ for $v\neq w$ are by definition the same for $F$ and for $H$). Property (d) follows directly from property (d$_F$),  because $V(\tH)\mi V(H)\su V(\tH)\mi V(F)$. It remains to check properties (e) and (f):
\begin{itemize}
\item[(e)] Let $D\in \C_H$ and we have to show that $D\su V(\tH')$ or $D\cap V(\tH')=\es$. If $D\in \C_F$, this is clear from property (e$_F$). Otherwise, by (\ref{F-collection}) we have $D\su \sh_H(w)=V(H)\mi V(F)\su V(\tH)\mi V(F)$. Since $V(\tH)\mi V(\tH')\su V(F)$ by (c$_F$), this implies $D\su V(\tH')$.
\item[(f)] Let $D\in \C_H$ and $D\su V(\tH')$. We have to show that $D$ is not adjacent to any vertex in $V(\tH)\mi V(\tH')$. If $D\in \C_F$, this is clear from property (f$_F$). Otherwise,  by (\ref{F-collection}) we have $D\su \sh_H(w)=V(H)\mi V(F)\su V(\tH)\mi V(F)$. So by (d$_F$) the set $D$ is not adjacent to any vertex in $V(\tH)\mi V(\tH')$.
\end{itemize}

\textbf{Case B.2.b: $\sh_H(w)\subsetneq V(H)$, $\e_H(\sh_H(w))= (k-1)\vert \sh_H(w)\vert$ and $w\in V_{\leq k-1}(\tH)$.}

Recall that no vertex in $V(\tH)\mi V(H)$ is adjacent to any member of $\C_H$. So we can apply Lemma \ref{shadow-prop6} and hence $\sh_{\tH}(w)\su \sh_H(w)$ and no vertex in $V(\tH)\mi V(H)$ is adjacent to any vertex in $\sh_{\tH}(w)$. Here the shadow $\sh_{\tH}(w)$ of $w$ in $\tH$ is defined with respect to the collection $\C_H$.

Set
$$\tH_F=\tH-\sh_{\tH}(w).$$
Here, we cannot use the graph $\tH$ itself in Lemma \ref{mainlem} for the graph $F$ as in the previous two cases (Case B.1 and Case B.2.a). But let us check that we can use the graph $\tH_F$ instead.

As $\sh_{\tH}(w)\su \sh_H(w)$, the graph $\tH_F=\tH-\sh_{\tH}(w)$ does indeed contain the graph $F=H-\sh_{H}(w)$ as an induced subgraph. Since $\sh_{\tH}(w)\su \sh_H(w)\su V(H)$, we have
\begin{equation}\label{tHFohneF}
V(\tH)\mi V(H)\su V(\tH_F)\mi V(F)
\end{equation}
and since $H$ is a proper induced subgraph of $\tH$, we can conclude that $F$ is also a proper induced subgraph of $\tH_F$.

Recall that $\sh_{\tH}(w)\su \sh_H(w)\su V(H)\subsetneq V(\tH)$. Thus, Claim \ref{shadow-prop8} applied to the graph $\tH$ and the vertex $w$ implies that
\begin{equation}\label{tHF-degk-1}
V_{\leq k-1}(\tH_F)=V_{\leq k-1}(\tH-\sh_{\tH}(w))\su V_{\leq k-1}(\tH)\mi \lbrace w\rbrace.
\end{equation}
From $V_{\leq k-1}(\tH)\su V_{\leq k-1}(H)\mi S_H$ and (\ref{F-S-difference-2}) we have
$$V_{\leq k-1}(\tH)\su V_{\leq k-1}(H)\mi S_H= (V_{\leq k-1}(F)\mi S_F)\cup \lbrace w\rbrace.$$
Together with (\ref{tHF-degk-1}) this yields
$$V_{\leq k-1}(\tH_F)\su V_{\leq k-1}(F)\mi S_F.$$

Furthermore no vertex in $V(\tH)\mi V(H)$ is adjacent to any member of $\C_F\su \C_H$ and by Claim \ref{F-C-set} no vertex in $V(H)\mi V(F)$ is adjacent to any member of $\C_F$. Thus, no vertex in $V(\tH)\mi V(F)$ is adjacent to any member of $\C_F$. In particular, no vertex in $V(\tH_F)\mi V(F)$ is adjacent to any member of $\C_F$.

So the graph $\tH_F$ indeed satisfies all conditions in Lemma \ref{mainlem} for $F$ (together with the collection $\C_F$). So by the conclusion of Lemma \ref{mainlem} for $F$ we can find a (non-empty) induced subgraph $\tH'$ of $\tH_F$ with the following six properties:
\begin{itemize}
\item[(a$_F$)] The minimum degree of $\tH'$ is at least $k$.
\item[(b$_F$)] $(k-1)v(\tH')-e(\tH')\leq (k-1)v(\tH_F)-e(\tH_F)$.
\item[(c$_F$)] $V(\tH_F)\mi V(\tH')\su \bigcup_{v\in V_{\leq k-1}(\tH_F)}B_v$, so in particular $V(\tH_F)\mi V(\tH')\su V(F)$.
\item[(d$_F$)] No vertex in $V(\tH_F)\mi V(\tH')$ is adjacent to any vertex in $V(\tH_F)\mi V(F)$.
\item[(e$_F$)] For each $D\in \C_F$ either $D\su V(\tH')$ or $D\cap V(\tH')=\es$.
\item[(f$_F$)] If $D\in \C_F$ and $D\su V(\tH')$, then $D$ is not adjacent to any vertex in $V(\tH_F)\mi V(\tH')$.
\end{itemize}

We have to show that there is an induced subgraph of $\tH$ fulfilling the six properties (a) to (f) in Lemma \ref{mainlem} (with respect to $H$ and the collection $\C_H$). Note that $\tH'$ is a non-empty induced subgraph of $\tH_F$ and hence also of $\tH$. Let us now check that this graph $\tH'$ satisfies (a) to (f) and is therefore the desired induced subgraph of $\tH$.
\begin{itemize}
\item[(a)] This statement is identical with (a$_F$).
\item[(b)] Note that
\begin{multline*}
(k-1)v(\tH_F)-e(\tH_F)=(k-1)(v(\tH)-\vert \sh_{\tH}(w)\vert)-(e(\tH)-\e_{\tH}(\sh_{\tH}(w)))\\
=(k-1)v(\tH)-e(\tH)-((k-1)\vert \sh_{\tH}(w)\vert-\e_{\tH}(\sh_{\tH}(w))).
\end{multline*}
By Corollary \ref{shadow-prop1} applied to $\tH$ we have $\e_{\tH}(\sh_{\tH}(w))\leq (k-1)\vert \sh_{\tH}(w)\vert$ and therefore
$$(k-1)v(\tH_F)-e(\tH_F)\leq (k-1)v(\tH)-e(\tH).$$
Thus, by (b$_F$)
$$(k-1)v(\tH')-e(\tH')\leq (k-1)v(\tH_F)-e(\tH_F)\leq (k-1)v(\tH)-e(\tH).$$
\item[(c)] Since $V(\tH)=\sh_{\tH}(w)\cup V(\tH_F)$, we have using (c$_F$) and $B_w=\sh_{\tH}(w)$
$$V(\tH)\mi V(\tH')\su \sh_{\tH}(w)\cup (V(\tH_F)\mi V(\tH'))\su B_w\cup \bigcup_{v\in V_{\leq k-1}(\tH_F)}B_v=\bigcup_{v\in V_{\leq k-1}(\tH_F)\cup \lbrace w\rbrace}B_v.$$
By (\ref{tHF-degk-1}) and the assumption $w\in V_{\leq k-1}(\tH)$ of Case B.2.b we have $V_{\leq k-1}(\tH_F)\cup \lbrace w\rbrace\su V_{\leq k-1}(\tH)$ and hence
$$V(\tH)\mi V(\tH')\su \bigcup_{v\in V_{\leq k-1}(\tH)}B_v,$$
(and in particular $V(\tH)\mi V(\tH')\su V(H)$ as $B_v\su V(H)$ for all $v\in V_{\leq k-1}(\tH)$).
\item[(d)] Recall that no vertex in $\sh_{\tH}(w)$ is adjacent to any vertex in $V(\tH)\mi V(H)$. By (d$_F$) no vertex in $V(\tH_F)\mi V(\tH')$ is adjacent to any vertex in $V(\tH_F)\mi V(F)$. By (\ref{tHFohneF}) this implies that no vertex in $V(\tH_F)\mi V(\tH')$ is adjacent to any vertex in $V(\tH)\mi V(H)\su V(\tH_F)\mi V(F)$.
Hence no vertex in $V(\tH)\mi V(\tH')= \sh_{\tH}(w)\cup (V(\tH_F)\mi V(\tH'))$ is adjacent to any vertex in $V(\tH)\mi V(H)$.
\item[(e)] Let $D\in \C_H$ and we have to show that $D\su V(\tH')$ or $D\cap V(\tH')=\es$. If $D\in \C_F$, this is clear from property (e$_F$). Otherwise, by (\ref{F-collection}) we have $D\su \sh_H(w)=V(H)\mi V(F)\su V(\tH)\mi V(F)$.  By property (II) in Definition \ref{defi-shadow} for the shadow $\sh_{\tH}(w)$ of $w$ in $\tH$ we know $D\su \sh_{\tH}(w)$ or $D\cap \sh_{\tH}(w)=\es$. If $D\su \sh_{\tH}(w)=V(\tH)\mi V(\tH_F)$, then $D$ is disjoint from $V(\tH')\su V(\tH_F)$. If $D\cap \sh_{\tH}(w)=\es$, then $D\su V(\tH_F)$ and by $D\su V(\tH)\mi V(F)$ we obtain $D\su V(\tH_F)\mi V(F)$. On the other hand $V(\tH_F)\mi V(\tH')\su V(F)$ by (c$_F$) and hence $D\su V(\tH_F)\mi V(F)\su  V(\tH')$. So in any case we have shown $D\su V(\tH')$ or $D\cap V(\tH')=\es$.
\item[(f)] Let $D\in \C_H$ and $D\su V(\tH')$. We have to show that $D$ is not adjacent to any vertex in $V(\tH)\mi V(\tH')$.

Note that $D\su V(\tH')\su V(\tH_F)$ implies $D\cap \sh_{\tH}(w)=\es$ and by property (IV) in Definition \ref{defi-shadow} we can conclude that $D$ is not adjacent to any vertex in $\sh_{\tH}(w)=V(\tH)\mi V(\tH_F)$. So it remains to show that $D$ is not adjacent to any vertex in $V(\tH_F)\mi V(\tH')$.

If $D\in \C_F$, then by (f$_F$) the set $D$ is not adjacent to any vertex in $V(\tH_F)\mi V(\tH')$ and we are done.

So let us now assume $D\not\in \C_F$, then by (\ref{F-collection}) we have $D\su \sh_H(w)=V(H)\mi V(F)$. On the other hand $D\su V(\tH')\su V(\tH_F)$, hence $D\su V(\tH_F)\mi V(F)$. So by (d$_F$) we obtain that $D$ is not adjacent to any vertex in $V(\tH_F)\mi V(\tH')$.
\end{itemize}
This finishes the proof.

\textit{Acknowledgements.} The author would like to thank her advisor Jacob Fox for suggesting this problem and for very helpful comments on earlier versions of this paper. Furthermore, the author is grateful to the anonymous referees for their useful comments and suggestions.

\section*{Appendix}
Here we provide a proof for Lemma \ref{deg-k-vertices}. Our proof is similar to the proof of Lemma 4 in \cite{erdos2}.

Fix $n$ and assume there are counterexamples to Lemma \ref{deg-k-vertices}. We choose a counterexample $H$ with minimum edge number, i.e.\ $H$ is a graph on $n$ vertices with minimum degree at least $k$ and with at most $\frac{n}{3k}$ vertices of degree $k$ and we assume that Lemma \ref{deg-k-vertices} holds for any graph with fewer than $e(H)$ edges.

Suppose $H$ has an edge between two vertices of degree at least $k+2$. After deleting that edge, the resulting graph still has $n$ vertices, minimum degree at least $k$ and at most $\frac{n}{3k}$ vertices of degree $k$, but it has fewer than $e(H)$ edges. Therefore it must have a subgraph on at most $\left(1-\frac{1}{27k^{2}}\right)n$ vertices with minimum degree at least $k$. But this subgraph is also a subgraph of $H$, which gives a contradiction to $H$ being a counterexample to Lemma \ref{deg-k-vertices}.

Hence $H$ has no edge between any two vertices of degree at least $k+2$. On the other hand at most $(k+1)\vert V_{\leq k+1}(H)\vert$ edges can be incident with a vertex of degree at most $k+1$, so
$$e(H)\leq (k+1)\vert V_{\leq k+1}(H)\vert\leq (k+1)n.$$

Let $T_1\su V(H)$ be the set of those vertices that are adjacent to a vertex of degree $k$. Then
$$\vert T_1\vert\leq k\vert V_{k}(H)\vert\leq \frac{k}{3k}n= \frac{1}{3}n.$$

Let $T_2=V_{\geq 9k}\su V(H)$ be the set of vertices of degree at least $9k$. Note that the sum of the degrees of all vertices of $H$ is $2e(H)\leq 2(k+1)n\leq 3kn$ (recall that $k\geq 2$). Hence $\vert T_2\vert\leq \frac{1}{3}n$.

Finally let $T=V(H)\mi (T_1\cup T_2)$, then $\vert T\vert\geq n-\vert T_1\vert-\vert T_2\vert\geq \frac{1}{3}n$.

Now colour an arbitrary vertex in $T$ red. This vertex has at most $9k-1$ neighbours in $H$ and each of them has degree at least $k+1$. So for each neighbour $v$ of the red vertex we can colour $k$ neighbours of $v$ in blue. After doing that, choose an uncoloured vertex in $T$, colour it red and for each of its neighbours $v$ ensure (by possibly colouring more vertices blue) that $v$ has at least $k$ blue neighbours. Repeat this process as long as there are uncoloured vertices in $T$. Whenever colouring a vertex $w$ of $T$ in red, it is indeed possible to ensure that each of its neighbours $v$ is adjacent to at least $k$ blue vertices: If $v$ already has a red neighbour from a previous step, then this already ensures that $v$ has at least $k$ blue neighbours. If $w$ is the first red neighbour of $v$, then we have $\deg_H(v)\geq k+1$ since $w\not\in T_1$. Hence $v$ has at least $k$ more neighbours besides $v$ all of which are either blue or uncoloured. By colouring up to $k$ uncoloured neighbours of $v$ in blue, we can ensure that $v$ has at least $k$ blue neighbours.

Note that for every vertex  $w\in T$ which we colour red, $w$ has at most $9k-1$ neighbours $v$ and for each of them we colour at most $k$ vertices blue. So in each step we colour one vertex red and at most $9k^{2}-k\leq 9k^{2}-1$ vertices blue. All in all we colour at most $9k^{2}$ vertices in each step and one of them we colour red. Since we repeat the process until all of $T$ is coloured, we make at least
$\frac{\vert T\vert}{9k^{2}}\geq \frac{1}{27k^{2}}n$
steps and therefore at least $\frac{1}{27k^{2}}n$ vertices get coloured red.

Now consider the subgraph $H'$ of $H$ obtained by deleting all red vertices. This graph has at most $\left(1-\frac{1}{27k^{2}}\right)n$ vertices. Every vertex $v\in V(H')$ that is adjacent to a red vertex in $H$ is also adjacent to at least $k$ blue vertices and these are all part of $H'$. Hence $v$ has degree at least $k$ in $H'$. Every vertex $v\in V(H')$ that is not adjacent to a red vertex in $H$ satisfies $\deg_{H'}(v)=\deg_{H}(v)\geq k$. Thus, the graph $H'$ has minimum degree at least $k$. But this contradicts $H$ being a counterexample, which completes the proof.
\end{document}